\theoremstyle{plain}
\newtheorem{theorem}{Theorem}[section]
\newtheorem{lemma}[theorem]{Lemma}
\theoremstyle{definition}
\newtheorem{definition}[theorem]{Definition}
\newtheorem{assumption}[theorem]{Assumption}
\theoremstyle{remark}
\newtheorem{strat}{Strategy}
\begin{document}

\title{Inexact Proximal-Gradient Methods with Support Identification



\author{Yutong Dai\thanks{E-mail: yud319@lehigh.edu}}
\affil{Department of Industrial and Systems Engineering, Lehigh University}
\author{Daniel P.~Robinson\thanks{E-mail: daniel.p.robinson@lehigh.edu}}
\affil{Department of Industrial and Systems Engineering, Lehigh University}
\titlepage

\maketitle

\begin{abstract}
  We consider the proximal-gradient method for minimizing an objective function that is the sum of a smooth function and a non-smooth convex function. A feature that distinguishes our work from most in the literature is that we assume that the associated proximal operator does not admit a closed-form solution. To address this challenge, we study two \emph{adaptive} and \emph{implementable} termination conditions that dictate how accurately the proximal-gradient subproblem is solved.  We prove that the number of iterations required for the inexact proximal-gradient method to reach a $\tau > 0$ approximate first-order stationary point is $\Ocal(\tau^{-2})$, which matches the similar result that holds when exact subproblem solutions are computed.  Also, by focusing on the overlapping group $\ell_1$ regularizer, we propose an algorithm for approximately solving the proximal-gradient subproblem, and then prove that its iterates identify (asymptotically) the support of an optimal solution.  If one imposes additional control over  the accuracy to which each subproblem is solved, we give an upper bound on the maximum number of iterations before the support of an optimal solution is obtained.

\end{abstract}

\section{Introduction}\label{sec.introduction}

We consider the minimization of a function written as the sum of a smooth function $f:\R{n} \to \R{}$ and a non-smooth convex function $r:\R{n} \to \R{}$, which can be written as
\begin{equation}\label{prob.main}
\min_{x\in\R{n}} \ f(x) + r(x).
\end{equation}
Many algorithms exist for solving such problems (e.g., see~\cite{bach2012optimization,beck2009fast}), which cover a wide range of machine learning problems.  A common choice for the regularizer $r$ is the $\ell_1$ norm~\citep{tibshirani1996regression}, which is used to attain solutions to problem~\eqref{prob.main} that have few nonzero entries, i.e., a sparse solution. Computing such a solution is critical in machine learning applications that use model prediction since it is only the nonzero entries in the solution that define the model, and therefore sparse solutions define simpler and easier to interpret and understand models. Despite the successes of $\ell_1$-norm regularization, its inadequacy for some modern applications has been observed. For example, since covariates often come in groups (e.g., genes that regulate hormone levels), one may wish to select them jointly. Also, incorporating group information into the modeling process can improve both the interpretability and accuracy of the resulting model~\citep{zeng2016overlapping}.  Therefore, new regularizers like the elastic-net~\citep{zou2005regularization} and group lasso~\citep{yuan2006model} have been proposed to address these issues, and have been successful in practice for genome-wide SNP selection, predicting Parkinson's disease, and gene microarray selection~\citep{ayers2010snp,zhu2004classification,cawley2006gene,shen2019use}.

The proximal-gradient (PG) method and its accelerated variants (APG) (e.g., see~\cite{beck2009fast}) form an important class of algorithms for solving problems of the form given by~\eqref{prob.main}. The core computation in a PG method is the evaluation of the proximal operator associated with the regularizer at a given point.  In particular, given a proximal parameter $\alpha>0$, the proximal operator associated with $r(\cdot)$ evaluated at $u\in\R{n}$ is defined as the unique solution to the following optimization problem:
\begin{equation}\label{prob.prox}
  \prox{\alpha r}{u} := \argmin{x\in\R{n}}
  \ \tfrac{1}{2\alpha}\norm{x-u}_2^2 +  r(x).
\end{equation}
The most basic PG method sets $u \gets x-\alpha\grad f(x)$, where $x$ is the current estimate of a solution to problem~\eqref{prob.main}, so that the PG method update takes the form $x \gets \prox{\alpha r}{x-\alpha\grad f(x)}$. For relatively simple regularizers, such as the $\ell_1$ norm, the solution to~\eqref{prob.prox} can be computed in closed-form.  However, for more complicated regularizers, such as the overlapping group $\ell_1$ norm \citep{yuan2013efficient}, the solution to~\eqref{prob.prox} does not admit a closed-form solution, and therefore an iterative algorithm (the iterations of which are often called the \emph{inner iterations}) is required to \emph{approximately} solve problem~\eqref{prob.prox} each iteration.  As mentioned in~\cite{schmidt2011}, many numerical experiments have shown that PG methods often work well even when the proximal operator is computed inexactly by approximately solving~\eqref{prob.prox}. Consequently, research has been devoted to understanding how inexact evaluations of the proximal operator impact the convergence properties of the PG method; we discuss these efforts next.

\subsection{Related Work}

Since the PG method solves problem~\eqref{prob.main} by solving a sequence of subproblems of the form~\eqref{prob.prox}, to guarantee convergence of an \emph{inexact} PG method, the errors when approximately solving~\eqref{prob.prox} must be carefully managed.  There are two basic types of strategies for controlling errors, an \emph{absolute criterion} and an \emph{adaptive criterion}.

Following the convention in~\cite{lin2018catalyst}, an \textit{absolute criterion} defines \emph{in advance} the error tolerance for subproblem~\eqref{prob.prox} that is acceptable during each iteration. For example, a framework is analyzed in~\cite{schmidt2011} that shows that if the error in solving~\eqref{prob.prox} is decreased at the rate of $\Ocal(1/k^\delta)$, where $k$ is the iteration counter and $\delta>2$ (respectively, $\delta>4$) for the PG (respectively, APG) method, then the inexact PG (respectively, APG) method shares the same convergence rate as its exact counterpart. In that work, for $\epsilon > 0$, the authors define $\xhat$ as an $\epsilon$-approximate solution to subproblem~\eqref{prob.prox} if and only if it satisfies $\tfrac{1}{2\alpha}\|\xhat-u\|_2^2 + r(\xhat) 
\leq \min_{x\in\R{n}} \left(\tfrac{1}{2\alpha}\|x-u\|^2_2 + r(x)\right) + \epsilon$. Other papers have considered different ways of characterizing an inexact solution to subproblem~\eqref{prob.prox}.  For example, \cite[Definition 2.1]{villa2013accelerated} defines $\xhat$ to be an inexact solution if and only if $(u-\xhat)/\alpha\in \partial_{\epsilon}r(\xhat)$ for an appropriate choice of $u$ (see Definition~\ref{def:epssubdiff} for the definition of the $\epsilon$-subdifferential $\partial_{\epsilon}$), and then use this notion to analyze the convergence rate of the APG method.  In~\cite[Definition 2.5]{rasch2020inexact}, $\xhat$ is considered an inexact solution to~\eqref{prob.prox} if and only if $\xhat=\prox{\alpha r}{u+e}$ for an appropriate choice of $u$ and where $\norm{e}\leq \sqrt{2\alpha\epsilon}$, and then  analyze the convergence rate of primal-dual algorithms for solving a saddle-point problem. These latter two notions are more stringent than the notion in~\cite{schmidt2011}, and all three are different in how the error is decomposed among the $\epsilon$-subdifferential of $r(x)$ and $\tfrac{1}{2\alpha}\norm{x-u}^2$ (for more details, see the discussion in~\cite[Lemma 2.6]{rasch2020inexact}). 

The motivation for an \textit{adaptive criterion} is to produce, given the current iterate $x_k$, an approximate solution $x_{k+1}$ to subproblem~\eqref{prob.prox} such that $f(x_{k+1}) + r(x_{k+1})$ is ``significantly" less than $f(x_k)+r(x_k)$.  A criterion of this type, by its nature, cannot be define in advance (as is the case with an absolute criterion) because it must take information about $f+r$ at $x_k$ into account, thus making it adaptive. Moreover, as argued in \cite{fuentes_descentwise_2012}, an absolute criterion may not be ideal for overall efficiency since the actual goal is not to find $x_{k+1} \approx \prox{\alpha_kr}{u}$, but rather to find an $x_{k+1}$ that significantly reduces the objective function.  The authors in~\cite{fuentes_descentwise_2012} consider problem~\eqref{prob.main} for  the special case when $f$ is ill-conditioned and $r \equiv 0$, in which case the method is equivalent to an inexact proximal-point method.  In particular, given $x_k$ and $\alpha_k>0$, they approximately solve $\min_{x\in\R{n}} \big( \frac{1}{2\alpha_k}\norm{x-x_k}_2^2 + f(x)\big)$, where $x_{k+1}$ is acceptable as an inexact solution if $f(x_k)-f(x_{k+1})\geq \eta\norm{\grad f(x_{k+1})}_2\norm{x_k-x_{k+1}}_2$ with $\eta\in (0,1)$. This work is extended in~\cite{lin2018catalyst} to allow for nontrivial $r$ and acceleration by approximately solving $\min_{x\in\R{n}} \big( \frac{1}{2\alpha_k}\norm{x-x_k}_2^2 + f(x) + r(x)\big)$, where $x_{k+1}$ is an acceptable inexact solution if $\tfrac{1}{2\alpha_k}\norm{x_{k+1}-x_k}_2^2 + f(x_{k+1}) + r(x_{k+1}) - \min_{x\in\R{n}} \big( \tfrac{1}{2\alpha}\norm{x-x_k}_2^2 +  f(x)+r(x)\big) \leq \kappa \norm{x_{k+1} - x_k}_2^2$ for some $\kappa>0$. Relating a measure of error in the subproblem to $\Ocal(\norm{x_{k+1} - x_k}_2)$ dates back to \cite{rockafellar1976monotone} and has been extensively studied for the proximal-point method.  To integrate second-derivative information, \cite{lee2019inexact} proposed to approximately solve, for a symmetric matrix $H$, the subproblem 
\begin{equation}\label{prob-prox-newton}
    \min_{x\in \R{n}} \ \nabla f(x_k)^T (x-x_k) + \tfrac{1}{2}(x-x_k)^TH(x-x_k) + r(x)-r(x_k).
\end{equation}
When $H=(1/\alpha_k)I$ and $u = x_k - \alpha_k\nabla f(x_k)$, subproblem~\eqref{prob-prox-newton} recovers subproblem~\eqref{prob.prox}. The vector $x_{k+1}$ is considered an acceptable approximate solution to subproblem~\eqref{prob-prox-newton} if it achieves a decrease in its objective function that is at least some fraction of that achieved by its exact solution. Verifying this adaptive criterion requires either estimating a tight lower bound of the minimal value for subproblem \eqref{prob-prox-newton} or devising (when problem~\eqref{prob-prox-newton} is strongly convex) an algorithm with global linear convergence for~\eqref{prob-prox-newton}. Recently, \cite{rasch2020inexact} designed two adaptive criteria specifically tailored to FISTA \citep{beck2017first}.

When $r$ is a sparsity-inducing regularizer, one is interested in a support identification property (i.e., the iterates of an algorithm correctly identify the support of an optimal solution to problem~\eqref{prob.main} in finitely many iterations).  It is shown in~\cite{nutini2019active,sun19a} that when problem \eqref{prob.prox} is solved \emph{exactly}, the PG and APG methods have the support identification property under non-degeneracy assumptions.\footnote{Support identification is called active-set identification in \cite{nutini2019active} and  manifold identification in \cite{sun19a}.} While support identification results are proved in~\cite{lee2020accelerating} for a framework built upon subproblem~\eqref{prob-prox-newton}, they either require the explicit computation of the exact solution to~\eqref{prob.prox} or require its implicit computation by making an assumption on the inexact subproblem solution obtained. To the best of our knowledge, no research has explored the support identification property when problem \eqref{prob.prox} must be solved \emph{inexactly}, which is a unique aspect of our paper. 

\subsection{Contributions}

This paper makes the following contributions in the area of PG methodology.

\begin{itemize}
    \item We propose an \emph{inexact} PG framework that allows for two practical \emph{adaptive criteria} for determining acceptable approximate solutions to the PG subproblem. The adaptive criteria are easier to verify compared to most in the literature.  For example, compared with~\cite{lin2018catalyst}, our criteria are easier to verify for regularizers such as the overlapping \grplone{}; to verify the adaptive criterion in \cite{lin2018catalyst}, one must either evaluate the proximal operator of $r$ exactly, which excludes the overlapping \grplone{}, or estimate a tight lower-bound of $\min_{x\in\R{n}} \frac{1}{2\alpha_k}\norm{x-u}^2 + f(x)+ r(x)$, which can be as difficult as solving problem~\eqref{prob.main} itself. 
    \item We provide a unified worst-case complexity bound analysis of our PG framework noted in the previous bullet point. Interestingly, the complexity result for our \emph{inexact} PG framework matches the result that holds for the \emph{exact} PG method.  
    \item For the special case of the overlapping \grplone{} regularizer, whose proximal operator does not admit a closed-form solution, we propose an enhanced PG subproblem solver that is designed with support identification in mind. When our PG framework uses our PG subproblem solver, we prove (under common assumptions) a worst-case complexity result for the number of iterations until optimal support identification occurs. To the best of our knowledge, this is the first work that establishes such a result for a framework built upon implementable (verifiable) conditions when the regularizer does not have a closed-form solution.  

\end{itemize}

\subsection{Notation and assumptions}

Let $\R{}$ denote the set of real numbers, $\R{n}$ denote the set of $n$-dimensional real vectors, and $\R{m \times n}$ denote the set of $m$-by-$n$-dimensional real matrices. Let $\norm{\cdot}$ denote the $\ell_2$ norm. The set of nonnegative integers is  denoted as $\N{} := \{0,1,2,\dots\}$ and for any positive integer $n$, we define $[n] := \{1,2,\dots,n\}$. For a matrix $A\in\R{m\times n}$ and index sets $(I,I')\subseteq [m]\times [n]$, we let $A_{[I,:]}$ denote the sub-matrix of $A$ formed by taking rows from the index set $I$ and all columns, and $A_{[I,I']}$ denote the sub-matrix of $A$ formed by taking rows in the index set $I$ and columns in the index set $I'$. For a convex function $h: \R{n}\to \R{}$, denote its Fenchel conjugate as $h^*$. Given $x\in\R{n}$ and $d\in\R{n}$, the directional derivative of $h$ at $x$ in the direction $d$ is denoted by $D_h(x;d)$.  Given a set $\Ccal\subseteq\R{n}$ that is closed and convex, define the projection operator $\proj{\Ccal}{\cdot}: \R{n} \to \Ccal$ as
$$
\proj{\Ccal}{\ybar} 
:= \argmin{y\in\Ccal} \ \|y-\ybar\|_2,
$$
which then also allows us to define the distance of a vector $\ybar\in\R{n}$ to the set $\Ccal$ as
$$
\dist(\ybar,\Ccal)
:= \| \ybar - \proj{\Ccal}{\ybar} \|.
$$

For any $\xbar \in\R{n}$ and $\alphabar>0$, we define the PG update as
\begin{equation}
\T(\xbar,\alphabar) 
:= \argmin{x\in\R{n}} \ \phi(x;\xbar,\alphabar)
\ \ \ \text{with} \ \ \
\phi(x;\xbar, \alphabar) := \tfrac{1}{2\alphabar}\norm{x- \big(\xbar-\alphabar \grad  f(\xbar)\big)}^2 + r(x).\label{def:pg-update}
\end{equation}
Using the notion of $\epsilon$-inexactness from~\cite[equation (4)]{schmidt2011}, we use $\Teps(\xbar,\alphabar)$ to denote the \emph{set} of $\epsilon$-PG updates, which is defined by 
\begin{equation}
 \Teps(\xbar,\alphabar)
 :=\{\xhat\in\R{n}~|~\phi(\xhat;\xbar, \alphabar) \leq \phi(\T(\xbar, \alphabar);\xbar, \alphabar) + \epsilon\}.  
 \label{def:ipg-update}
\end{equation}
Note that it follows from these definitions that $\Tcal_0(\xbar,\alphabar) = \{\T(\xbar,\alphabar)\}$.

Finally, we make the following assumption throughout the paper.

\bassumption\label{ass.first}
Define $\Lcal := \{x \in\R{n} : f(x) + r(x) \leq f(x_{0}) + r(x_0)\}$ where $x_0$ is an initial solution estimate to~\eqref{prob.main}. The function $f$ is continuously differentiable and $\grad f$ is Lipschitz continuous with constant $L_g$ on an open neighborhood containing $\Lcal$. The function $r$ is closed, proper, and convex. The function $f+r$ is bounded below.
\eassumption

\subsection{Organization}

In Section~\ref{sec.algorithm}, we describe the inexact PG method that uses adaptive termination criteria for solving problem~\eqref{prob.main}.  The convergence analysis and support identification result is presented in Section~\ref{sec.analysis} and Section~\ref{sec.identification},  respectively.
Numerical results are given in Section~\ref{sec.numerical}, and concluding remarks are presented in Section~\ref{sec.conclusion}.

\section{Algorithm}\label{sec.algorithm}

Our proposed inexact PG method is stated as Algorithm~\ref{alg:main}.  Given the $k$th iterate $x_k$ and proximal parameter $\alpha_k$, the algorithm computes an $\epsilon_k$-PG update $\xhat_{k+1}\in\Tepsk(x_k,\alpha_k)$ and associated step $s_k = \xhat_{k+1} - x_k$, where the value for $\epsk$ depends on the value of the input parameter $\opt\in\{\optone,\opttwo\}$. In particular, if $\opt\ = \optone{}$, then $\epsk = c_k\|s_k\|^2$ with $c_k$ chosen in Step~\ref{line:ck}, while if $\opt\ = \opttwo{}$, then  $\epsk = \gamma_2 \big(\phi(x_k;x_k, \alpha_k) - \phi(\T(x_k,\alpha_k);x_k,\alpha_k)\big)$ with $\gamma_2 \in (0,1/2]$. Next, $\Delta_k$ is set to a value that depends on the value of $\opt{}$ (see Steps~\ref{line:Deltak1}   and~\ref{line:Deltak2}). The choices for $\Delta_k$ and $c_{k}$ and the restrictions on $\gamma_1$ and $\gamma_2$ in Algorithm~\ref{alg:main} ensure that $s_k$ is a descent direction for $f+r$ at $x_k$ (see Lemma~\ref{lemma:per-iter-gain}), and that the line search performed in Steps~\ref{line:beginwhile}--\ref{line:endwhile} gives a decrease in the objective function proportional to $\alpha_k\chipgk{^2}$ when the unit step is accepted (see Lemma~\ref{lem:K-pg}(i)) where
\begin{equation}\label{def:chipgk}
\chipgk 
:= \norm{T(x_k,\alpha_k)-x_k}/\alpha_k
\end{equation}
is a measure of first-order optimality for problem~\eqref{prob.main} at $x_k$ (see~\cite[Theorem~10.7]{beck2017first} for more details on $\chipgk$).  Finally, in Steps~\ref{line:update-pg-if}--\ref{line:update-pg-endif} the proximal parameter for the next iteration is set to a fraction of its current value  anytime $j > 0$ (i.e., anytime the unit step size is not accepted during the linesearch).

To successfully implement Algorithm~\ref{alg:main}, for each $k\in\N{}$, one must be able to compute the triple $(\xhat_{k+1},s_k,\epsk)$ needed in Steps~\ref{line:triple1} and~\ref{line:triple2}.  As long as $x_k$ is not optimal (i.e., as long as $\chipgk > 0$), the existence of such points is guaranteed, although computing one is challenging for nontrivial regularizers. We consider this practical question in Section~\ref{sec.identification}. 

\balgorithm[!th]
\caption{Inexact PG Method for solving problem~\eqref{prob.main}.}
\label{alg:main}
\balgorithmic[1]
   \State \textbf{Input:} Initial estimate $x_0\in\R{n}$ and $\opt\in\{\optone,\opttwo\}$.
   \State \textbf{Constants:} $\{\xi,\eta,\zeta\}\subset (0,1)$, $\alpha_0 \in (0,\infty)$, $\gamma_1\in(0,2)$, and $\gamma_2\in(0,1/2]$
   \For{$k = 0,1,2,\dots$}
      \If{ $(\opt{} = \optone{})$}
          \State Choose $c_k\in \Big(0, \frac{1}{4}\left(\sqrt{\frac{6}{(1+\gamma_1)\alpha_k}}-\sqrt{\frac{2}{\alpha_k}}\right)^2\Big]$.\label{line:ck}
          \State\label{line:triple1} Compute a triple $(\xhat_{k+1},s_k,\epsk)$ satisfying the conditions
          $$
          \xhat_{k+1} \in \Tepsk(x_k,\alpha_k), \ \
          s_k = \xhat_{k+1} - x_k, \ \ \text{and} \ \
          \epsk = c_k\|s_k\|^2.
          $$
          \State Set $\Delta_k \gets -  \tfrac{1}{\alpha_k}\norm{s_k}^2 + \sqrt{(2/\alpha_k)\epsk}\norm{s_k} + \epsk$.\label{line:Deltak1}
       \Else{ $(\opt{} = \opttwo{})$}
          \State\label{line:triple2} Compute a triple $(\xhat_{k+1},s_k,\epsk)$ satisfying the conditions 
          $$
          \quad \xhat_{k+1} \in \Tepsk(x_k,\alpha_k), \ \ 
          s_k = \xhat_{k+1} - x_k, 
          \ \ \text{and} \ \
          \epsk = \gamma_2 \big(\phi(x_k;x_k, \alpha_k) - \phi(\T(x_k,\alpha_k);x_k,\alpha_k)\big).
          $$
          \State Set $\Delta_k \gets r(x_k+s_k) - r(x_k) + \grad  f(x_k)^T s_k$.\label{line:Deltak2}
        \EndIf
        \State Set $j \gets 0$ and $\xtrial_{k+1,0} \gets x_k + s_k$.\label{line:y0-pg}
            \While{$f(\xtrial_{k+1,j}) + r(\xtrial_{k+1,j}) > f(x_k) + r(x_k) + \eta\xi^j\Delta_k$}\label{line:armijo-pg}\label{line:beginwhile}
              \State Set $j \gets j + 1$ and $\xtrial_{k+1,j} \gets x_k + \xi^j s_k$. \label{line:y-up-pg}
            \EndWhile\label{line:endwhile}
            \State $x_{k+1} \gets \xtrial_{k+1,j}$\label{line:next-iterate}
            \If{$j = 0$} \label{line:update-pg-if}
                \State $\alpha_{k+1}\gets \alpha_k$ and $\flagpgk \gets \samealpha$. \label{line:update2}
            \Else
                \State $\alpha_{k+1}\gets \zeta\alpha_k$ and $\flagpgk \gets \decalpha$. \label{line:update2b}
            \EndIf \label{line:update-pg-endif}
            \EndFor
        \ealgorithmic
\ealgorithm

\section{Convergence Analysis}\label{sec.analysis}

In this section we analyze Algorithm~\ref{alg:main}. 
In particular, in Section~\ref{sec.preliminaries}, we give preliminary results related to $\epsilon$-PG updates, and in Section~\ref{sec.complexity} we present a complexity analysis.

\subsection{Preliminary results}\label{sec.preliminaries}

In this section, we discuss the $\epsilon$-subdifferential of a function, and characterize the properties of the $\epsilon$-PG update.  We begin with the definition of the $\epsilon$-subdifferential.

\begin{definition}[$\epsilon$-subdifferential]{\cite[Section 4.3]{bertsekas2003convex}}\label{def:epssubdiff}
For a given convex function $h: \R{n}\to \R{}$, 
vector $x\in\R{n}$, and scalar $\epsilon>0$, the $\epsilon$-subdifferential of $h$ at $x\in\R{n}$ is 
$$
\partial_{\epsilon}h(x)
:=\{d\in\R{n}~|~ h(z)\geq h(x) + d^T(z-x) - \epsilon \text{ for all }z\in\R{n}\}.
$$
\end{definition}

We now bound the difference between an $\epsilon$-PG update and the exact PG update. 

\begin{lemma}[\cite{salzo2012inexact}]\label{lemma:chibound}
Let $\xbar \in\R{n}$, $\alphabar>0$, and $\epsilon>0$. It holds that
$$
\norm{\T(\xbar,\alphabar) - \xhat}^2 \leq 2\alphabar\epsilon
\ \ \text{for all} \ \ \xhat\in\Teps(\xbar,\alphabar).
$$
\end{lemma}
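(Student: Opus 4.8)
The plan is to exploit the strong convexity of the subproblem objective $\phi(\cdot;\xbar,\alphabar)$ defined in~\eqref{def:pg-update}. First I would observe that the quadratic term $\tfrac{1}{2\alphabar}\norm{x - (\xbar - \alphabar\grad f(\xbar))}^2$ has constant Hessian $\tfrac{1}{\alphabar}I$, so it is $\tfrac{1}{\alphabar}$-strongly convex; since $r$ is convex by Assumption~\ref{ass.first}, adding $r$ preserves strong convexity, and therefore $\phi(\cdot;\xbar,\alphabar)$ is $\tfrac{1}{\alphabar}$-strongly convex.

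Next, because $\T(\xbar,\alphabar)$ is the (unique) minimizer of $\phi(\cdot;\xbar,\alphabar)$, the first-order optimality condition gives $0 \in \partial \phi(\T(\xbar,\alphabar);\xbar,\alphabar)$. Invoking the subgradient characterization of $\tfrac{1}{\alphabar}$-strong convexity at the minimizer, where the subgradient may be taken to be zero, yields the growth inequality
$$
\phi(\xhat;\xbar,\alphabar) \geq \phi(\T(\xbar,\alphabar);\xbar,\alphabar) + \tfrac{1}{2\alphabar}\norm{\xhat - \T(\xbar,\alphabar)}^2
$$
for every $\xhat\in\R{n}$.

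Finally, for any $\xhat\in\Teps(\xbar,\alphabar)$ the defining inequality in~\eqref{def:ipg-update} gives $\phi(\xhat;\xbar,\alphabar) - \phi(\T(\xbar,\alphabar);\xbar,\alphabar) \leq \epsilon$. Combining this with the growth inequality above and rearranging produces $\tfrac{1}{2\alphabar}\norm{\xhat - \T(\xbar,\alphabar)}^2 \leq \epsilon$, i.e., $\norm{\T(\xbar,\alphabar) - \xhat}^2 \leq 2\alphabar\epsilon$, as claimed.

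The only subtle point, and hence the main obstacle, is justifying the strong-convexity growth inequality with a vanishing linear term; this hinges on the optimality condition $0\in\partial\phi$ at the minimizer together with the subgradient form of strong convexity. Everything else is algebraic rearrangement and a direct appeal to the definition of the $\epsilon$-PG update set.
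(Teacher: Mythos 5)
Your proof is correct and follows exactly the paper's argument: both establish that $\phi(\cdot;\xbar,\alphabar)$ is $\tfrac{1}{\alphabar}$-strongly convex, use $0\in\partial\phi(\T(\xbar,\alphabar);\xbar,\alphabar)$ to obtain the quadratic-growth inequality at the minimizer, and combine it with the defining inequality of $\Teps(\xbar,\alphabar)$ in~\eqref{def:ipg-update}. Your write-up simply spells out in more detail the steps the paper compresses into a single chain of inequalities.
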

\begin{proof}
Note that $\phi(x ;\xbar,\alphabar)$ (see~\eqref{def:pg-update}) is strongly convex as a function of $x$ with strong convexity parameter greater than or equal to $1/\alphabar$. Therefore, we may conclude that 
$$
\begin{aligned}
     \epsilon \geq \phi(\xhat;\xbar, \alphabar)- \phi(\T(\xbar,\alphabar);\xbar, \alphabar) \geq \tfrac{1}{2\alphabar}\norm{\T(\xbar,\alphabar) - \xhat}^2
     \ \ \text{for all $\xhat\in\Teps(\xbar,\alphabar)$}
\end{aligned}
$$
where the last inequality follows from $0\in\partial \phi(T(\xbar,\alphabar);\xbar,\alphabar)$ (i.e., it follows from the optimality conditions for problem~\eqref{def:pg-update}).
\end{proof}

Part (i) of the next result constructs two upper-bounds on the directional derivative of the objective function $f+r$ at $\xbar$ along the direction associated with the $\epsilon$-PG update. This result suggests that by choosing $\epsilon$ sufficiently small, the directional derivative will be negative.  For additional insight, the reader should compare them to the definition of $\Delta_k$ in lines~\ref{line:Deltak1} and~\ref{line:Deltak2} of Algorithm~\ref{alg:main}.  The second part of the next result bounds the change in $f+r$ that results from taking the step along the direction. 

\begin{lemma}\label{lem:epsilon-descent-full}
Let $\xbar \in\R{n}$, $\alphabar > 0$, and $\epsilon > 0$. If $\xhat \in \Teps(\xbar,\alphabar)$, then $s = \xhat-\xbar$ satisfies the following results.
\begin{itemize}
\item[(i)] The directional derivative of $f+r$ at $\xbar$ in the direction $s$
satisfies
$$
D_{f+r}(\xbar;s) 
\leq -\tfrac{1}{\alphabar}\norm{s}^2 
+ \sqrt{\tfrac{2\epsilon}{\alphabar}}\norm{s}+\epsilon 
\ \ \
\text{and}
\ \ \ 
D_{f+r}(\xbar;s) \leq r(\xbar + s) - r(\xbar) + \grad f(\xbar)^T s.
$$
\item[(ii)] A bound on the change in the objective function after taking the step $s$ is
$$
f(\xbar + s) + r(\xbar + s)  \\
\leq f(\xbar) + r(\xbar) 
- (\tfrac{1}{\alphabar}-\tfrac{L_g}{2})\norm{s}^2 + \sqrt{\tfrac{2\epsilon}{\alphabar}}\norm{s}+ \epsilon.
$$
\end{itemize}
\end{lemma}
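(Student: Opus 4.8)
The plan is to reduce both parts of the lemma to a single estimate on the quantity $\grad f(\xbar)^T s + r(\xhat) - r(\xbar)$ (recall $\xhat = \xbar + s$), which is exactly the expression appearing on the right of the second inequality in (i). First I would dispatch that second inequality, since it is the easiest. Because $f$ is differentiable and $r$ is convex and real-valued, the directional derivative splits as $D_{f+r}(\xbar;s) = \grad f(\xbar)^T s + D_r(\xbar;s)$. Convexity of $r$ makes the difference quotient $t \mapsto (r(\xbar+ts)-r(\xbar))/t$ nondecreasing, so its limit as $t\to 0^+$ is at most its value at $t=1$, i.e. $D_r(\xbar;s)\le r(\xbar+s)-r(\xbar)$. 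Adding $\grad f(\xbar)^T s$ gives the second bound and, simultaneously, shows $D_{f+r}(\xbar;s)\le \grad f(\xbar)^T s + r(\xhat)-r(\xbar)$, so it suffices to bound this right-hand side to finish (i).

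Next I would establish the algebraic identity that drives the argument. Abbreviating $\phi(\cdot):=\phi(\cdot;\xbar,\alphabar)$ and setting $u := \xbar - \alphabar\grad f(\xbar)$, I expand the squared norms in $\phi(\xhat)$ and $\phi(\xbar)$ using $\xhat - u = s + \alphabar\grad f(\xbar)$ and $\xbar - u = \alphabar\grad f(\xbar)$; the $\tfrac{\alphabar}{2}\norm{\grad f(\xbar)}^2$ terms cancel, leaving
$$
\phi(\xhat) - \phi(\xbar) = \grad f(\xbar)^T s + r(\xhat) - r(\xbar) + \tfrac{1}{2\alphabar}\norm{s}^2.
$$
I then bound $\phi(\xhat)-\phi(\xbar)$ from above: the $\epsilon$-PG property gives $\phi(\xhat)\le \phi(T)+\epsilon$ with $T:=\T(\xbar,\alphabar)$, while strong convexity of $\phi$ with parameter $1/\alphabar$ combined with $0\in\partial\phi(T)$ gives $\phi(T)\le \phi(\xbar)-\tfrac{1}{2\alphabar}\norm{T-\xbar}^2$. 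Writing $T-\xbar = s - e$ with $e := \xhat - T$, I lower-bound $\norm{T-\xbar}^2 \ge \norm{s}^2 - 2\norm{s}\norm{e} + \norm{e}^2$ by Cauchy--Schwarz, discard the nonnegative $\norm{e}^2$ term, and invoke Lemma~\ref{lemma:chibound} in the form $\norm{e}\le\sqrt{2\alphabar\epsilon}$ to replace $\tfrac{1}{\alphabar}\norm{s}\norm{e}$ by $\sqrt{2\epsilon/\alphabar}\,\norm{s}$. Substituting back through the identity above yields $\grad f(\xbar)^T s + r(\xhat) - r(\xbar) \le -\tfrac{1}{\alphabar}\norm{s}^2 + \sqrt{2\epsilon/\alphabar}\,\norm{s} + \epsilon$, which is the first bound in (i).

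For part (ii), I would combine the descent lemma (the standard consequence of $\grad f$ being $L_g$-Lipschitz), $f(\xhat)\le f(\xbar)+\grad f(\xbar)^T s + \tfrac{L_g}{2}\norm{s}^2$, with the estimate on $\grad f(\xbar)^T s + r(\xhat)-r(\xbar)$ already derived in (i). Adding $r(\xhat)$ to both sides and regrouping gives the claimed bound, with $-\tfrac{1}{\alphabar}\norm{s}^2$ and $+\tfrac{L_g}{2}\norm{s}^2$ combining into $-(\tfrac{1}{\alphabar}-\tfrac{L_g}{2})\norm{s}^2$.

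The main obstacle is the first bound in (i): strong convexity naturally produces $\norm{T-\xbar}^2$, the distance from $\xbar$ to the \emph{exact} update, whereas $s$ is defined through the \emph{inexact} update $\xhat$. Bridging this gap is precisely where the decomposition $T-\xbar = s-e$ and the Salzo-type error bound $\norm{e}^2\le 2\alphabar\epsilon$ of Lemma~\ref{lemma:chibound} are essential, and the cross term $\norm{s}\norm{e}$ is exactly what generates the $\sqrt{2\epsilon/\alphabar}\,\norm{s}$ term. One minor technical point is that the descent lemma in (ii) needs $\grad f$ Lipschitz along $[\xbar,\xhat]$, which is supplied by Assumption~\ref{ass.first} when the result is applied at iterates lying in $\Lcal$.
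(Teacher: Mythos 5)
Your proof is correct, but it follows a genuinely different route from the paper's. The paper proves part (i) by invoking the machinery of Schmidt et al.\ (their Lemma~2) to extract an explicit $\epsilon$-subgradient $g_\epsilon \in \partial_\epsilon r(\xhat)$ together with an error vector $w$ satisfying $\|w\|\leq\sqrt{2\alphabar\epsilon}$, then adds the $\epsilon$-subgradient inequality at $\xhat$ to the ordinary subgradient inequality at $\xbar$ and evaluates $D_{f+r}(\xbar;s)$ through the formula $s^T\grad f(\xbar)+\sup_{g_r\in\partial r(\xbar)}s^Tg_r$; the second inequality of (i) is simply cited from Lee--Wright, and part (ii) reuses the pair $(g_\epsilon,w)$ together with the descent lemma. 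You instead work entirely at the level of function values of $\phi$: the exact expansion $\phi(\xhat)-\phi(\xbar)=\grad f(\xbar)^Ts+r(\xhat)-r(\xbar)+\tfrac{1}{2\alphabar}\|s\|^2$, the $\epsilon$-optimality of $\xhat$, strong convexity at the minimizer $T$, and Lemma~\ref{lemma:chibound} via the decomposition $T-\xbar=s-e$ yield one estimate on $\grad f(\xbar)^Ts+r(\xhat)-r(\xbar)$, from which the first inequality of (i) follows through the second (itself proved elementarily from monotone difference quotients) and part (ii) follows through the descent lemma. What your route buys is self-containment and economy: no $\epsilon$-subdifferential apparatus, no external citations beyond Lemma~\ref{lemma:chibound}, and a single estimate driving all three conclusions. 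What the paper's route buys is the explicit structural object $g_\epsilon$ (mirroring how inexactness is decomposed in the literature it builds on) and a direct bound on the supremum over subgradients rather than one routed through the linearized change; the end bounds are identical, including the constants.
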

\begin{proof}
Let $\xhat\in \Teps(\xbar,\alphabar)$. It follows from~\cite[Lemma 2]{schmidt2011} that there is a $w\in\R{n}$ so that
\begin{equation}\label{epsilon-subgradient}
 \norm{w}\leq \sqrt{2\alphabar\epsilon}
 \ \ \text{and} \ \ 
 g_\epsilon := \frac{1}{\alphabar}\left[\xbar-\alphabar\grad  f(\xbar) + w - \xhat\right] \in\partial_\epsilon r(\xhat).
\end{equation}
From $g_\epsilon\in\partial_\epsilon r(\xhat)$ and convexity of $r(x)$, 
it follows that
\begin{equation}\label{descent-2}
r(\xbar)\geq r(\xhat) + g_\epsilon^T(\xbar - \xhat)-\epsilon
\ \ \text{and} \ \
r(\xhat)\geq r(\xbar) + g_r^T(\xhat - \xbar) 
\ \ \text{for all $g_r\in\partial r(\xbar)$.}
\end{equation}

Let us now proceed to prove part (i). Adding the two inequalities in~\eqref{descent-2} together gives
$(g_r-g_\epsilon)^T(\xhat - \xbar)\leq \epsilon$ for all $g_r\in\partial r(\xbar)$.  This inequality may then be combined with $s = \xhat-\xbar$, the definition of $g_\epsilon$,  the Cauchy-Schwarz inequality, and \eqref{epsilon-subgradient} to obtain
\bequation\label{eq:s-inner-g}
\begin{aligned}
s^T(\grad  f(\xbar) + g_r)
&= (\xhat - \xbar)^T(\grad  f(\xbar) + g_r) 
= (\xhat - \xbar)^T\big(\tfrac{1}{\alphabar}(\xbar-\xhat+w) + g_r - g_\epsilon\big) \\
&= -\tfrac{1}{\alphabar}\norm{\xhat - \xbar}^2 + \tfrac{1}{\alphabar}(\xhat - \xbar)^Tw + (\xhat - \xbar)^T(g_r - g_\epsilon)\\
&\leq -\tfrac{1}{\alphabar}\norm{s}^2 + \tfrac{1}{\alphabar}\sqrt{2\alphabar\epsilon}\norm{s}+ \epsilon
\ \ \text{for all $g_r\in \partial r(\xbar)$.}
\end{aligned}
\eequation
By the definition of the directional derivative, the fact that $f(x)$ is differentiable, convexity of $r$, the result in~\cite[Theorem~2.87]{mordukhovich2013easy}, and \eqref{eq:s-inner-g}, it follows that
$$
D_{f+r}(\xbar;s) 
=  s^T\grad  f(\xbar) + \sup_{g_r\in \partial r(\xbar)}s^Tg_r \\
\leq -\tfrac{1}{\alphabar}\norm{s}^2 + \sqrt{\tfrac{2\epsilon}{\alphabar}}\norm{s}+ \epsilon,
$$
which completes the proof of the first inequality in part (i). The proof of the second inequality in part (i) follows directly from the proof in~\cite[Lemma 1]{lee2019inexact}.

To prove part (ii), we can use the Lipschitz continuity of $\nabla f$ with Lipschitz constant $L_g$ (see Assumption \ref{ass.first}), $\xhat = \xbar + s$, and the first  inequality in~\eqref{descent-2}
to obtain
\begin{align*}
    f(\xbar + s) + r(\xbar + s)
     &\leq f(\xbar) + \grad  f(\xbar)^Ts  + \tfrac{L_g}{2}\norm{s}^2 + r(\xbar) + g_\epsilon^Ts+\epsilon\\
     &= f(\xbar) + r(\xbar) + (\grad  f(\xbar)+g_\epsilon)^Ts  + \tfrac{L_g}{2}\norm{s}^2 +\epsilon.
\end{align*}
Combining this inequality with \eqref{epsilon-subgradient} and the Cauchy-Schwarz inequality gives
\begin{align*}
    f(\xbar + s) + r(\xbar + s)
     &\leq f(\xbar) + r(\xbar) -\tfrac{1}{\alphabar}(s - w)^Ts  + \tfrac{L_g}{2}\norm{s}^2 +\epsilon\\
     &\leq f(\xbar) + r(\xbar) - \left(\tfrac{1}{\alphabar}-\tfrac{L_g}{2}\right)\norm{s}^2  + \tfrac{1}{\alphabar}\norm{w}\norm{s} +\epsilon\\
     &\leq f(\xbar) + r(\xbar) - \left(\tfrac{1}{\alphabar}-\tfrac{L_g}{2}\right)\norm{s}^2  + \tfrac{1}{\alphabar}\sqrt{2\alphabar\epsilon}\norm{s} +\epsilon,
\end{align*}
which completes the proof.
\end{proof}

\subsection{Global Complexity}
\label{sec.complexity}

In this section, we analyze the worst-case iteration complexity of Algorithm~\ref{alg:main} for identifying an approximate  stationary point of problem~\eqref{prob.main}.  Specifically, given a tolerance $\tau \in (0,1]$, we derive an upper bound on the number of iterations until  $\chipgk \leq \tau$. We will assume throughout, for each $k\in\N{}$, that $\chipgk > 0$ since if $\chipgk = 0$ then $x_k$ would be an \emph{exact} first-order solution. The next result gives an upper bound on $\Delta_k$, which in turn gives an upper bound on the directional derivative of $f+r$ at $x_k$ in the direction $s_k$.

\begin{lemma}\label{lemma:per-iter-gain}
For each $k\in\N{}$, the directional derivative of $f+r$ satisfies 
\begin{equation}\label{def:betak}
D_{f+r}(x_k;s_k) 
\leq \Delta_k
\leq
-\beta\alpha_k \chipgk^2
< 0
\ \ \ \text{where} \ \ \
\beta := 
\begin{cases}
\frac{\gamma_1}{2} & \text{if $\opt{} = \optone{}$,}\\
\frac{1}{4} & \text{if $\opt{} =  \opttwo{}$}.
\end{cases}
\end{equation}
\end{lemma}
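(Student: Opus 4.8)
The statement splits into the two inequalities $D_{f+r}(x_k;s_k) \le \Delta_k$ and $\Delta_k \le -\beta\alpha_k\chipgk^2 < 0$, and each is handled separately for the two settings of $\opt{}$. The first inequality is immediate from Lemma~\ref{lem:epsilon-descent-full}(i): when $\opt{} = \optone{}$, the quantity $\Delta_k$ set in line~\ref{line:Deltak1} is exactly the right-hand side of the first displayed bound in that lemma (with $\xbar = x_k$, $\alphabar = \alpha_k$, $s = s_k$, $\epsilon = \epsilon_k$), and when $\opt{} = \opttwo{}$, $\Delta_k$ from line~\ref{line:Deltak2} is exactly the right-hand side of the second bound. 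So all of the work lies in establishing $\Delta_k \le -\beta\alpha_k\chipgk^2$.

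For $\opt{} = \optone{}$, I would first substitute $\epsilon_k = c_k\norm{s_k}^2$ into line~\ref{line:Deltak1} to write $\Delta_k = -A\norm{s_k}^2$ with $A := \tfrac{1}{\alpha_k} - \sqrt{2c_k/\alpha_k} - c_k$, and record the abbreviation $u := \sqrt{\alpha_k c_k}$, so that $\alpha_k A = 1 - \sqrt{2}\,u - u^2$. Next I would produce a lower bound on $\norm{s_k}$: combining the triangle inequality $\norm{\T(x_k,\alpha_k) - x_k} \le \norm{\T(x_k,\alpha_k) - \xhat_{k+1}} + \norm{s_k}$ with Lemma~\ref{lemma:chibound} (which gives $\norm{\T(x_k,\alpha_k) - \xhat_{k+1}} \le \sqrt{2\alpha_k\epsilon_k} = \sqrt{2}\,u\norm{s_k}$) and $\norm{\T(x_k,\alpha_k) - x_k} = \alpha_k\chipgk$ yields $\norm{s_k} \ge \alpha_k\chipgk/(1+\sqrt{2}\,u)$. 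Feeding this into $\Delta_k = -A\norm{s_k}^2$ reduces the goal to the scalar inequality $g(u) \ge \gamma_1/2$, where $g(u) := (1 - \sqrt{2}\,u - u^2)/(1 + \sqrt{2}\,u)^2$.

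The engineered form of the bound on $c_k$ in line~\ref{line:ck} is what makes this scalar inequality hold, and this is where the main effort lies. Multiplying that bound through by $\alpha_k$ shows it is equivalent to $u \le u_{\max} := \tfrac{1}{2}(\sqrt{6/(1+\gamma_1)} - \sqrt{2})$, a bound independent of $\alpha_k$. A direct computation gives $g'(u) = -3\sqrt{2}/(1+\sqrt{2}\,u)^3 < 0$, so $g$ is strictly decreasing, and evaluating at the endpoint (using $1 + \sqrt{2}\,u_{\max} = \sqrt{3/(1+\gamma_1)}$) gives exactly $g(u_{\max}) = \gamma_1/2$. Hence $g(u) \ge \gamma_1/2$ for all $u \in (0, u_{\max}]$, so $\Delta_k \le -\tfrac{\gamma_1}{2}\alpha_k\chipgk^2$; the same endpoint computation shows $1 - \sqrt{2}\,u - u^2 > 0$, so $A > 0$ and the bound is strictly negative. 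I expect verifying $g(u_{\max}) = \gamma_1/2$ and the monotonicity of $g$ to be the only genuinely delicate step.

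For $\opt{} = \opttwo{}$, I would expand the quadratic in $\phi$ to show that the $\Delta_k$ from line~\ref{line:Deltak2} satisfies $\Delta_k = \phi(\xhat_{k+1};x_k,\alpha_k) - \phi(x_k;x_k,\alpha_k) - \tfrac{1}{2\alpha_k}\norm{s_k}^2$. Using $\xhat_{k+1} \in \Tepsk(x_k,\alpha_k)$ (so $\phi(\xhat_{k+1};x_k,\alpha_k) \le \phi(\T(x_k,\alpha_k);x_k,\alpha_k) + \epsilon_k$) together with the definition $\epsilon_k = \gamma_2(\phi(x_k;x_k,\alpha_k) - \phi(\T(x_k,\alpha_k);x_k,\alpha_k))$ collapses this to $\Delta_k \le -(1-\gamma_2)(\phi(x_k;x_k,\alpha_k) - \phi(\T(x_k,\alpha_k);x_k,\alpha_k)) - \tfrac{1}{2\alpha_k}\norm{s_k}^2$. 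Finally, $\gamma_2 \le 1/2$ gives $1-\gamma_2 \ge 1/2$, and the strong-convexity estimate from the proof of Lemma~\ref{lemma:chibound} (namely $\phi(x_k;x_k,\alpha_k) - \phi(\T(x_k,\alpha_k);x_k,\alpha_k) \ge \tfrac{1}{2\alpha_k}\norm{\T(x_k,\alpha_k) - x_k}^2 = \tfrac{\alpha_k}{2}\chipgk^2$) yields $\Delta_k \le -\tfrac{1}{4}\alpha_k\chipgk^2 < 0$, which is the claimed bound with $\beta = 1/4$.
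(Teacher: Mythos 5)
Your proposal is correct, and both halves check out: the reduction of the $\optone{}$ case to the scalar inequality $g(u)\geq\gamma_1/2$ is valid (I verified $g'(u)=-3\sqrt{2}/(1+\sqrt{2}\,u)^3$ and $g(u_{\max})=\gamma_1/2$ with $1+\sqrt{2}\,u_{\max}=\sqrt{3/(1+\gamma_1)}$), and the $\opttwo{}$ chain closes with the correct constant $\beta=1/4$. Compared with the paper, your $\optone{}$ argument is essentially the same — the paper squares your triangle inequality to get its bound on $\alpha_k\chipgk^2$ in~\eqref{chik-ub}, and its pivotal inequality~\eqref{eq:lb-connection} is exactly your statement $g(u)\geq\gamma_1/2$ after substituting $\epsk=c_k\norm{s_k}^2$ — but the paper merely asserts that~\eqref{eq:lb-connection} ``can be verified,'' whereas you actually verify it via the monotonicity of $g$; that explicit check is a genuine improvement in completeness. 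Your $\opttwo{}$ argument, however, takes a different and shorter route: the paper derives $-\Delta_k\geq\epsk+\tfrac{1}{2\alpha_k}\norm{s_k}^2$ (its~\eqref{eq:intermediate-2}) and then bounds $\chipgk^2$ through the triangle inequality, Young's inequality, and Lemma~\ref{lemma:chibound} (its~\eqref{eq:itermediate-3}), while you bypass all of that by applying the strong convexity of $\phi(\cdot;x_k,\alpha_k)$ at $x_k$ itself, i.e.\ $\phi(x_k;x_k,\alpha_k)-\phi(\T(x_k,\alpha_k);x_k,\alpha_k)\geq\tfrac{\alpha_k}{2}\chipgk^2$, and pair it with $1-\gamma_2\geq 1/2$. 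Both yield the same constant; yours uses strong convexity once and directly, at the cost of discarding the $\epsk$ term that the paper retains (that term is not needed here, though the paper reuses~\eqref{eq:intermediate-2} later in Lemma~\ref{lem:alpha-fixed}, so its derivation is not wasted). One cosmetic remark: strict negativity of $-\beta\alpha_k\chipgk^2$ in both cases rests on the section's standing assumption $\chipgk>0$, which you use implicitly and could cite explicitly.
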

\begin{proof}
We start by discussing properties of $\xhat_{k+1}$ and $\epsk$ that hold regardless of the value of $\opt$. It follows from~\eqref{def:chipgk} and the triangle inequality that $\xhat_{k+1}$ satisfies
\begin{align}
     \chipgk^2 
     &=\frac{\norm{\T(x_k,\alpha_k)-\hat x_{k+1}+\hat x_{k+1}-x_k}^2}{ \alpha_k^2}\label{eq:itermediate} \\
     & \leq \frac{\norm{\T(x_k,\alpha_k) - \hat x_{k+1}}^2}{\alpha_k^2} + \frac{2\norm{\T(x_k,\alpha_k) - \hat x_{k+1}}\norm{\hat x_{k+1}-x_{k}}}{\alpha_k^2} + \frac{\norm{\hat x_{k+1}-x_{k}}^2}{\alpha_k^2}. \nonumber
\end{align}
Also, since $\xhat_{k+1}\in\Tepsk(x_k,\alpha_k)$, we can apply Lemma~\ref{lemma:chibound} with $\xhat=\xhat_{k+1}$, $\xbar=x_{k}, \alphabar=\alpha_k$, and $\epsilon=\epsk$ to get
\begin{equation}\label{ineq:Terr}
\|\T(x_k,\alpha_k)-\xhat_{k+1}\|^2 
\leq 2\alpha_k\epsk.
\end{equation}
Now, let us consider the two cases determined by the value of the parameter $\opt{}$.

\textbf{Case 1 ($\opt{} = \optone{}$).}
It follows from~\eqref{eq:itermediate},  \eqref{ineq:Terr}, and $s_k = \xhat_{k+1} - x_k$ that
\begin{equation}\label{chik-ub}
\alpha_k\chipgk{^2} 
\leq 2\epsk + \frac{2\sqrt{2\alpha_k\epsk}\norm{s_k}}{\alpha_k} + \frac{\norm{s_k}^2}{\alpha_k}.
\end{equation}
Next, it follows from Step~\ref{line:triple1} and how $c_k$ is chosen in Step~\ref{line:ck} that
$$
\epsk 
= c_k\|s_k\|^2
\in \Big(0, \tfrac{1}{4}\left(\sqrt{\tfrac{6}{(1+\gamma_1)\alpha_k}}-\sqrt{\tfrac{2}{\alpha_k}}\right)^2\|s_k\|^2\Big]. 
$$
Using the fact that $\epsk$ lies in this range, it can be verified that
\begin{equation}\label{eq:lb-connection}
\tfrac{\gamma_1}{2\alpha_k} \left( 2\alpha_k\epsk + 2\sqrt{2\alpha_k\epsk}\norm{s_k}+\norm{s_k}^2\right) 
\leq  \tfrac{1}{\alpha_k}\norm{s_k}^2 - \sqrt{\tfrac{2\epsk}{\alpha_k}}\norm{s_k}-\epsk.
\end{equation}
Next, it follows from Step~\ref{line:Deltak1}, \eqref{eq:lb-connection}, and~\eqref{chik-ub} that
\begin{align}
\Delta_k 
&= -\left( \tfrac{1}{\alpha_k}\norm{s_k}^2 - \sqrt{\tfrac{2\epsk}{\alpha_k}}\norm{s_k}-\epsk \right) \nonumber 
\leq -\tfrac{\gamma_1}{2\alpha_k} \left( 2\alpha_k\epsk + 2\sqrt{2\alpha_k\epsk}\norm{s_k}+\norm{s_k}^2\right) \\
&= -\tfrac{\gamma_1}{2} \left( 2\epsk + \tfrac{2\sqrt{2\alpha_k\epsk}\norm{s_k}}{\alpha_k} + \tfrac{\norm{s_k}^2}{\alpha_k}\right) 
\leq -\tfrac{\gamma_1}{2}\alpha_k\chipgk{^2}. \label{Deltak-bd}
\end{align}
Finally, we can combine the equality in~\eqref{Deltak-bd} with Lemma~\ref{lem:epsilon-descent-full}(i) to obtain
$$
D_{f+r}(x_k;s_k) \leq -\left( \tfrac{1}{\alpha_k}\norm{s_k}^2 - \sqrt{\tfrac{2\epsk}{\alpha_k}}\norm{s_k}-\epsk\right)
= \Delta_k.
$$
This result together with~\eqref{Deltak-bd} completes the proof for this case.

\textbf{Case 2 ($\opt{} = \opttwo{}$).}
From $\xhat_{k+1} \in\Tepsk(x_k,\alpha_k)$ and the definition of $\epsilon_k$ in Step~\ref{line:triple2}, we have
$$
\phi(\xhat_{k+1};x_k,\alpha_k)
\leq \phi(T(x_k,\alpha_k);x_k,\alpha_k)
+ \gamma_2(
\phi(x_k;x_k,\alpha_k)
- \phi(T(x_k,\alpha_k);x_k,\alpha_k)).
$$
By adding the term $-\phi(x_k;x_k,\alpha_k)$ to both sides of the equation, rearranging terms, and then using the definition of $\epsk$ in Step~\ref{line:triple2}, we obtain
\begin{align}\label{ineq:diff-phip}
\phi(\xhat_{k+1};x_k,\alpha_k)
- \phi(x_k;x_k,\alpha_k)
&\leq (\gamma_2-1)(
\phi(x_k;x_k,\alpha_k)
- \phi(T(x_k,\alpha_k);x_k,\alpha_k)) \nonumber \\
&= \left(\tfrac{\gamma_2-1}{\gamma_2}\right)\epsk.
\end{align}
Using algebraic simplification and the definition of $\Delta_k$ in Step~\ref{line:Deltak2}, we obtain
\begin{align*}
\phi(x_k;x_k,\alpha_k)
- \phi(\xhat_{k+1};x_k,\alpha_k))
&= -\nabla f(x_k)^T s_k - \tfrac{1}{2\alpha_k}\|s_k\|^2 + r(x_k) - r(\xhat_{k+1}) \\
&= -\Delta_k - \tfrac{1}{2\alpha_k}\|s_k\|^2.
\end{align*}
By combining this equality with~\eqref{ineq:diff-phip} and recalling that $\gamma_2\in(0,1/2]$, we get
\begin{align}
-\Delta_k  
&= \phi(x_k;x_k,\alpha_k) - \phi(\xhat_{k+1};x_k,\alpha_k) + \tfrac{1}{2\alpha_k}\norm{s_k}^2 \nonumber\\
&\geq \left(\tfrac{1-\gamma_2}{\gamma_2}\right)\epsk + \tfrac{1}{2\alpha_k}\norm{s_k}^2
\geq \epsk + \tfrac{1}{2\alpha_k}\norm{s_k}^2. \label{eq:intermediate-2}
\end{align}
Applying Young's inequality to the product of norms in~\eqref{eq:itermediate} and using~\eqref{ineq:Terr}, we obtain
\begin{equation}\label{eq:itermediate-3}
\chipgk^2
\leq \tfrac{2\norm{\T(x_k,\alpha_k)-\hat x_{k+1}}^2}{\alpha_k^2} + \tfrac{2\norm{s_k}^2}{\alpha_k^2}
\leq \tfrac{4\epsk}{\alpha_k} + \tfrac{2\norm{s_k}^2}{\alpha_k^2}.
\end{equation}
Multiplying both sides of~\eqref{eq:itermediate-3} by $\alpha_k/4$, and then applying \eqref{eq:intermediate-2}, using the definition of $\Delta_k$ in  Step~\ref{line:Deltak2}, and Lemma~\ref{lem:epsilon-descent-full}(i) with $\xhat = \xhat_{k+1}$, $\xbar = x_k$, $s = s_k$, and $\epsilon = \epsk$ gives
\begin{align}
\tfrac{\alpha_k}{4}\chipgk^2
&\leq \epsk + \tfrac{1}{2\alpha_k}\|s_k\|^2
\leq -\Delta_k \nonumber \\
&= - \big(r(x_k+s_k) - r(x_k) + \nabla f(x_k)^T s_k \big)
\leq - D_{f+r}(x_k;s_k). \label{Deltak-bd-2}
\end{align}
Multiplying this inequality by a negative one completes the proof for this case.

It follows from $\alpha_k \in (0,\infty)$ and $\chipgk > 0$ that $-\beta\alpha_k \chipgk^2 < 0$, as claimed in~\eqref{def:betak}.
\end{proof}

It is convenient to define the following partition of iterations performed by Algorithm~\ref{alg:main} that depend on the status of the PG parameter:
\begin{align*}
\setpgSAME  &:= \{k\in \N{}: \text{ $\flagpgk = \samealpha$ in Line~\ref{line:update2}}\} \ \  \text{and} \\
\setpgDEC   &:= \{k\in \N{}: \text{ $\flagpgk = \decalpha$ in Line~\ref{line:update2b}}\}.
\end{align*}
Note that $\setpgSAME\cap\setpgDEC = \emptyset$ and that every iteration of the algorithm is in $\setpgSAME\cup\setpgDEC$.  Using these sets, the next result shows that the \textbf{while} loop in  Algorithm~\ref{alg:main} always terminates and that the new iterate produces a useful decrease in the objective function $f+r$. 

\begin{lemma}\label{lem:K-pg}
For each $k\in \N{}$, the \textbf{while} loop in Step~\ref{line:beginwhile} 
terminates finitely. Also, the following hold:
\begin{itemize}
\item[(i)] If $k\in\setpgSAME$, then $\alpha_{k+1} = \alpha_k$ and, with $\beta$ defined in~\eqref{def:betak}, it holds that
$$
f(x_{k+1}) + r(x_{k+1})
\leq f(x_k) + r(x_k) + \eta\Delta_k
\leq f(x_k) + r(x_k) - \eta\beta\alpha_k \chipgk^2.
$$
\item[(ii)] If $k\in\setpgDEC$, then $\alpha_{k+1} = \zeta\alpha_k$ and
$
f(x_{k+1}) + r(x_{k+1})
< f(x_k) + r(x_k).
$
\end{itemize}
Thus, the objective function $f+r$ is monotonically strictly decreasing over $\{x_k\}$.
\end{lemma}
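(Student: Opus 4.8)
The plan is to handle the three assertions in turn, using as the central fact the chain $D_{f+r}(x_k;s_k) \le \Delta_k \le -\beta\alpha_k\chipgk^2 < 0$ supplied by Lemma~\ref{lemma:per-iter-gain}. This simultaneously tells us that $s_k$ is a genuine descent direction for $f+r$ at $x_k$ and, crucially, that $\Delta_k$ is strictly negative; the strict negativity is what makes the Armijo threshold slack.

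First I would establish finite termination of the \textbf{while} loop in Step~\ref{line:beginwhile}. The directional derivative $D_{f+r}(x_k;s_k)$ exists and equals $\nabla f(x_k)^Ts_k + \sup_{g\in\partial r(x_k)}g^Ts_k$ (using differentiability of $f$ and convexity of $r$), and by definition it is the limit as $t\downarrow 0$ of the difference quotient $\big((f+r)(x_k+ts_k)-(f+r)(x_k)\big)/t$. Because $\eta\in(0,1)$ and $\Delta_k<0$, we have $\eta\Delta_k>\Delta_k\ge D_{f+r}(x_k;s_k)$, so the limiting value of the quotient lies strictly below the threshold $\eta\Delta_k$. Hence there is a finite index $\bar\jmath$ such that for every $j\ge\bar\jmath$ the quotient evaluated at $t=\xi^j$ satisfies $\big((f+r)(x_k+\xi^js_k)-(f+r)(x_k)\big)/\xi^j \le \eta\Delta_k$; after multiplying by $\xi^j>0$ this is exactly the negation of the loop condition in Step~\ref{line:armijo-pg} with $\xtrial_{k+1,j}=x_k+\xi^js_k$. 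Thus the loop cannot run forever.

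Parts (i) and (ii) then read off the exit condition of the loop. For part (i), $k\in\setpgSAME$ means the loop exited at $j=0$, so Step~\ref{line:update2} gives $\alpha_{k+1}=\alpha_k$ and the accepted unit step satisfies $f(x_{k+1})+r(x_{k+1}) \le f(x_k)+r(x_k)+\eta\Delta_k$; substituting $\Delta_k\le-\beta\alpha_k\chipgk^2$ from Lemma~\ref{lemma:per-iter-gain} and using $\eta>0$ yields the sharper second inequality. For part (ii), $k\in\setpgDEC$ means the loop exited at some $j>0$, so Step~\ref{line:update2b} gives $\alpha_{k+1}=\zeta\alpha_k$, and the exit inequality $f(x_{k+1})+r(x_{k+1})\le f(x_k)+r(x_k)+\eta\xi^j\Delta_k$ together with $\eta\xi^j\Delta_k<0$ (since $\xi^j>0$ and $\Delta_k<0$) gives the claimed strict decrease. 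The final monotone strict decrease over $\{x_k\}$ follows because $\setpgSAME\cup\setpgDEC=\N{}$, and in case (i) the decrease term $-\eta\beta\alpha_k\chipgk^2$ is strictly negative thanks to $\chipgk>0$, while case (ii) is already strict.

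The main obstacle is the finite-termination argument, and the delicate point there is that $f+r$ is nonsmooth, so one cannot simply Taylor-expand the objective. The argument must instead go through the one-sided directional derivative and exploit the strict gap $\eta\Delta_k - D_{f+r}(x_k;s_k)>0$, which is available only because Lemma~\ref{lemma:per-iter-gain} guarantees $\Delta_k<0$ (and $\eta<1$). Everything after termination is bookkeeping on which branch of the \textbf{if} in Steps~\ref{line:update-pg-if}--\ref{line:update-pg-endif} updated $\alpha_k$.
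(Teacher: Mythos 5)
Your proposal is correct and follows essentially the same route as the paper: both rest on Lemma~\ref{lemma:per-iter-gain} giving $D_{f+r}(x_k;s_k)\le\Delta_k\le-\beta\alpha_k\chipgk^2<0$, then read parts (i) and (ii) off the branch of the \textbf{if} statement and the Armijo exit condition. The only difference is cosmetic: where the paper simply invokes ``standard results for a backtracking Armijo linesearch,'' you spell out that standard argument via the one-sided directional derivative and the strict gap $\eta\Delta_k-D_{f+r}(x_k;s_k)>0$, which is a perfectly valid (and more self-contained) way to justify finite termination.
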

\begin{proof}
From Lemma~\ref{lemma:per-iter-gain} we have $D_{f+r}(x_k;s_k) < 0$ so that standard results for a backtracking Armijo linesearch ensure that the \textbf{while} loop starting in Step~\ref{line:beginwhile} will terminate finitely, thus proving the first claim.

To prove part (i), suppose that $k\in\setpgSAME$. From the definition of $\setpgSAME$ we know that $\flagpgk = \samealpha$, and thus from Step~\ref{line:update2} that $\alpha_{k+1} = \alpha_k$ and that $j = 0$ when Step~\ref{line:update-pg-if} is reached.  This latter fact means that the inequality in Step~\ref{line:armijo-pg} does not hold for $j = 0$, meaning that the computed $x_{k+1}$ satisfies $x_{k+1} = \xtrial_{k+1,0} = x_k + s_k$ and $f(x_{k+1}) + r(x_{k+1})\leq f(x_k) + r(x_k)  +\eta\Delta_k$. Combining this inequality with Lemma~\ref{lemma:per-iter-gain} gives
$$
\label{dec-pg-1}
f(x_{k+1}) + r(x_{k+1}) 
\leq f(x_k) + r(x_k)  +\eta\Delta_k
\leq f(x_k) + r(x_k) - \eta\beta\alpha_k\chipgk^2
$$
as claimed, thus completing the proof of part (i).

To prove part (ii), suppose that $k\in\setpgDEC$.  From the definition of $\setpgDEC$ we know that $\flagpgk = \decalpha$, and therefore from Step~\ref{line:update2b} we have that $\alpha_{k+1} = \zeta\alpha_k$ and that $j > 0$ when Step~\ref{line:update-pg-if} is reached. Moreover, the linesearch in Steps~\ref{line:beginwhile}--\ref{line:endwhile} produces a vector $\xtrial_{k+1,j} \gets x_k + \xi^js_k$ that, with Lemma~\ref{lemma:per-iter-gain}, satisfies
\begin{equation}\label{dec-nsuff}
\begin{aligned}
f(\xtrial_{k+1,j}) + r(\xtrial_{k+1,j}) 
\leq f(x_k) + r(x_k) + \eta\xi^j\Delta_k
\leq f(x_k) + r(x_k) - \eta\xi^j \beta\alpha_k\chipgk^2.
\end{aligned}
\end{equation}
Combining this with $x_{k+1} \gets \xtrial_{k+1,j}$,
$\alpha_k > 0$, and $\chipgk > 0$ establishes that
$$
f(x_{k+1}) + r(x_{k+1})
= f(\xtrial_{k+1,j}) + r(\xtrial_{k+1,j})
\leq f(x_k) + r(x_k) -  \eta\xi^j\beta\alpha_k\chipgk^2
< f(x_k) + r(x_k),
$$
which completes the proof of part (ii).

Finally, the objective function $f+r$ is monotonically strictly decreasing over $\{x_k\}$ as a consequence of parts (i) and (ii), $\eta \in (0,1)$, $\beta > 0$, $\alpha_k > 0$, and $\chipgk > 0$.
\end{proof}

We now show that the PG parameters are bounded away from zero, thus implying that the unit step size is accepted for all iterations with sufficiently large index.

\begin{lemma} \label{lem:alpha-fixed}
For all $k\in\N{}$, the PG parameter $\alpha_k$ satisfies 
\begin{equation}\label{def:alphamin}
\alpha_k 
\geq \alpha_{\min} 
:= 
\begin{cases}
\min\big\{\alpha_0,\tfrac{3\gamma_1\zeta(1-\eta)}{L_g(1+\gamma_1)}\big\} & \text{if $\opt{} = \optone{}$,}\\
\min\big\{\alpha_0,\tfrac{\zeta(1-\eta)}{L_g}\big\} & \text{if $\opt{} = \opttwo{}$.}
\end{cases}
\end{equation}
Moreover, a bound on the number of times the PG parameter is decreased is given by
\begin{equation}\label{bound-Kalphadecr}
|\setpgDEC|
\leq 
c^\alpha_{\downarrow} 
:= \frac{\log(\alpha_{\min}/\alpha_0)}{\log(\zeta)}.
\end{equation}
\end{lemma}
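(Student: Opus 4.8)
The plan is to first exhibit a threshold $\widehat\alpha>0$ with the property that whenever $\alpha_k\le\widehat\alpha$ the unit step is accepted (so that $k\in\setpgSAME$ and $\alpha_{k+1}=\alpha_k$), and then to deduce~\eqref{def:alphamin} from this by a short induction. Concretely, the statement I will prove is the contrapositive: \emph{if $k\in\setpgDEC$, then $\alpha_k>\widehat\alpha$}. Granting this, any decrease gives $\alpha_{k+1}=\zeta\alpha_k>\zeta\widehat\alpha$, while any non-decrease gives $\alpha_{k+1}=\alpha_k$; hence an induction on $k$ started from $\alpha_0$ yields $\alpha_k\ge\min\{\alpha_0,\zeta\widehat\alpha\}$ for all $k$, which is exactly $\alpha_{\min}$ once I identify $\widehat\alpha$ in each case.

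To locate $\widehat\alpha$, I start from the fact that $k\in\setpgDEC$ forces $j>0$, i.e.\ the \textbf{while} test in Step~\ref{line:armijo-pg} holds at $j=0$: $f(x_k+s_k)+r(x_k+s_k)>f(x_k)+r(x_k)+\eta\Delta_k$. The key intermediate estimate is a descent bound of the common form
\[
f(x_k+s_k)+r(x_k+s_k)\le f(x_k)+r(x_k)+\Delta_k+\tfrac{L_g}{2}\norm{s_k}^2,
\]
which, when $\opt=\optone$, follows directly from Lemma~\ref{lem:epsilon-descent-full}(ii) together with the definition of $\Delta_k$ in Step~\ref{line:Deltak1} (the $\epsilon_k$-terms there combine exactly into $\Delta_k+\tfrac{L_g}{2}\norm{s_k}^2$), and when $\opt=\opttwo$ follows from the standard descent lemma for $f$ (Assumption~\ref{ass.first}) together with the definition of $\Delta_k$ in Step~\ref{line:Deltak2}. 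Combining the failed test with this descent bound and rearranging gives the single inequality $(1-\eta)(-\Delta_k)<\tfrac{L_g}{2}\norm{s_k}^2$, valid in both cases.

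It then remains to lower bound $-\Delta_k$ by a constant multiple of $\norm{s_k}^2/\alpha_k$; the size of that constant produces the two thresholds. For $\opt=\opttwo$ this is immediate, since~\eqref{eq:intermediate-2} already gives $-\Delta_k\ge\tfrac{1}{2\alpha_k}\norm{s_k}^2$, so the constant is $\tfrac12$ and $\widehat\alpha=\tfrac{1-\eta}{L_g}$. For $\opt=\optone$ I need the sharper constant $\tfrac{3\gamma_1}{2(1+\gamma_1)}$, which is the main technical point. Substituting $\epsilon_k=c_k\norm{s_k}^2$ into the exact expression $-\Delta_k=\tfrac1{\alpha_k}\norm{s_k}^2-\sqrt{\tfrac{2\epsilon_k}{\alpha_k}}\norm{s_k}-\epsilon_k$ and setting $\theta:=\sqrt{c_k\alpha_k}$ gives $-\Delta_k=\tfrac{\norm{s_k}^2}{\alpha_k}\,g(\theta)$ with $g(\theta)=1-\sqrt2\,\theta-\theta^2$. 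Since $g$ is decreasing for $\theta\ge0$ and the Step~\ref{line:ck} bound on $c_k$ forces $\theta\le\theta^\ast:=\tfrac12\big(\sqrt{6/(1+\gamma_1)}-\sqrt2\big)>0$ (using $\gamma_1<2$), the worst case is $\theta=\theta^\ast$, at which a short computation gives $g(\theta^\ast)=\tfrac{6-6/(1+\gamma_1)}{4}=\tfrac{3\gamma_1}{2(1+\gamma_1)}$. Hence $-\Delta_k\ge\tfrac{3\gamma_1}{2(1+\gamma_1)}\tfrac{\norm{s_k}^2}{\alpha_k}$ and $\widehat\alpha=\tfrac{3\gamma_1(1-\eta)}{(1+\gamma_1)L_g}$. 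In either case, inserting the lower bound on $-\Delta_k$ into $(1-\eta)(-\Delta_k)<\tfrac{L_g}{2}\norm{s_k}^2$, cancelling $\norm{s_k}^2>0$, and solving for $\alpha_k$ gives $\alpha_k>\widehat\alpha$, which establishes the threshold property and hence~\eqref{def:alphamin}.

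Finally, for~\eqref{bound-Kalphadecr} I observe that $\alpha$ is multiplied by $\zeta$ exactly once for each index in $\setpgDEC$ and is otherwise unchanged, so after all $|\setpgDEC|$ decreases the parameter equals $\alpha_0\zeta^{|\setpgDEC|}$; combining $\alpha_0\zeta^{|\setpgDEC|}\ge\alpha_{\min}$ from~\eqref{def:alphamin}, taking logarithms, and dividing by $\log\zeta<0$ (which reverses the inequality) yields $|\setpgDEC|\le\log(\alpha_{\min}/\alpha_0)/\log\zeta$. The main obstacle is the $\opt=\optone$ constant computation above: everything hinges on checking that the admissible range for $c_k$ in Step~\ref{line:ck} is exactly calibrated so that $\min_{\theta\in(0,\theta^\ast]}g(\theta)=\tfrac{3\gamma_1}{2(1+\gamma_1)}$, which upgrades the cruder estimate $-\Delta_k\ge\tfrac{\gamma_1}{2\alpha_k}\norm{s_k}^2$ implicit in Lemma~\ref{lemma:per-iter-gain} to the sharper constant needed for the stated $\alpha_{\min}$.
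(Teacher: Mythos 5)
Your proposal is correct and takes essentially the same route as the paper: both arguments reduce to showing that whenever $\alpha_k$ is at or below the stated threshold the Armijo test must pass at $j=0$ (you state this as the contrapositive for $k\in\setpgDEC$), using Lemma~\ref{lem:epsilon-descent-full}(ii) together with the calibrated range of $c_k$ when $\opt=\optone$ and the descent lemma together with~\eqref{eq:intermediate-2} when $\opt=\opttwo$, followed by the same $\zeta$-geometric argument for~\eqref{bound-Kalphadecr}. The only differences are presentational: you unify the two cases through the single inequality $(1-\eta)(-\Delta_k)<\tfrac{L_g}{2}\norm{s_k}^2$, and your $g(\theta)$ minimization is an explicit verification of the bound $-\Delta_k\ge\tfrac{3\gamma_1}{2(1+\gamma_1)\alpha_k}\norm{s_k}^2$, which the paper obtains by rearranging its asserted inequality~\eqref{eq:lb-connection} into~\eqref{eq:lb-connection-variation}.
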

\begin{proof}
We consider the two cases depending on the value of $\opt{}$.

\textbf{Case 1 ($\opt{}=\optone{}$).}
To prove~\eqref{def:alphamin}, we first establish the following:
\begin{equation}\label{to-show}
\text{if} \ 
\alpha_k\leq \tfrac{3\gamma_1(1-\eta)}{L_g(1+\gamma_1)}
\ \text{then} \
\eta \big(\tfrac{1}{\alpha_k}\norm{s_k}^2 - \sqrt{\tfrac{2\epsk}{\alpha_k}}\norm{s_k}-\epsk\big)
\leq (\tfrac{1}{\alpha_k}-\tfrac{L_g}{2})\norm{s_k}^2 - \sqrt{\tfrac{2\epsk}{\alpha_k}}\norm{s_k}- \epsk. 
\end{equation}
Using basic algebra, it may be shown that~\eqref{to-show} is equivalent to establishing that
\begin{equation}\label{to-show2}
\text{if} \ 
\alpha_k\leq \tfrac{3\gamma_1(1-\eta)}{L_g(1+\gamma_1)}
\ \text{then} \
\left(\tfrac{1}{\alpha_k} - \tfrac{L_g}{2(1-\eta)}\right)\norm{s_k}^2 \geq \sqrt{\tfrac{2\epsk}{\alpha_k}}\norm{s_k}+\epsk.
\end{equation}
Note that by rearranging \eqref{eq:lb-connection}, one can obtain 
\begin{equation}\label{eq:lb-connection-variation}
\tfrac{2-\gamma_1}{2(1+\gamma_1)\alpha_k}\norm{s_k}^2\geq \sqrt{\tfrac{2\epsk}{\alpha_k}}\norm{s_k}+\epsk.    
\end{equation}
Next, notice that
$
\text{if} \ 
\alpha_k\leq \tfrac{3\gamma_1(1-\eta)}{L_g(1+\gamma_1)}
\  \text{then}  \
\big(\tfrac{1}{\alpha_k} - \tfrac{L_g}{2(1-\eta)}\big)\norm{s_k}^2 \geq \tfrac{2-\gamma_1}{2(1+\gamma_1)\alpha_k}\norm{s_k}^2,
$
which together with \eqref{eq:lb-connection-variation} proves that~\eqref{to-show2} holds (equivalently, that~\eqref{to-show} holds).  Combining~\eqref{to-show} with $\xtrial_{k+1,0} = x_k+s_k = \xhat_{k+1}$, Lemma~\ref{lem:epsilon-descent-full}(ii) with $\xhat = \xhat_{k+1}$, $\xbar = x_k$, $\epsilon = \epsk$, and $\alphabar = \alpha_k$, and Step~\ref{line:Deltak1} yields
\begin{align*}
\text{if} \ 
\alpha_k\leq \tfrac{3\gamma_1(1-\eta)}{L_g(1+\gamma_1)}
\ \ \text{then} \ \
&f(\xtrial_{k+1,0}) + r(\xtrial_{k+1,0})  \\
&\leq f(x_k) + r(x_k) - \big(\tfrac{1}{\alpha_k}-\tfrac{L_g}{2}\big)\norm{s_k}^2 + \sqrt{\tfrac{2\epsk}{\alpha_k}}\norm{s_k}+ \epsk \\
&\leq  f(x_k) + r(x_k) - \eta \Big(\tfrac{1}{\alpha_k}\norm{s_k}^2 - \sqrt{\tfrac{2\epsk}{\alpha_k}}\norm{s_k}-\epsk\Big)\\
&= f(x_k) + r(x_k) + \eta \Delta_k.
\end{align*}
This inequality implies that the condition checked in Line~\ref{line:armijo-pg} for $j = 0$ will not hold so that $j = 0$ when Line~\ref{line:update-pg-if} is reached, meaning that the update $\alpha_{k+1} \gets \alpha_k$ will take place.  Summarizing, we have shown that, for any $k\in\N{}$ satisfying $\alpha_k \leq \tfrac{3\gamma_1(1-\eta)}{L_g(1+\gamma_1)}$, it holds that $\alpha_{k+1} = \alpha_k$.  Combining this observation with the fact that the $\alpha_0$ input to the algorithm can be any positive number, and that anytime the PG parameter is decreased it is by a factor $\zeta$ (see Step~\ref{line:update2b}), allows us to conclude that~\eqref{def:alphamin} holds.

\textbf{Case 2 ($\opt{}=\opttwo{}$).}
Note that the inequality in~\eqref{eq:intermediate-2} still holds, and thus
$-\Delta_k \geq \tfrac{1}{2\alpha_k}\norm{s_k}^2$ with $\Delta_k$ defined in Step~\ref{line:Deltak2}. Combining this inequality with the Lipschitz continuity assumption on $\nabla f$ (see Assumption~\ref{ass.first}),  $\xtrial_{k+1,0} = x_k + s_k$, and the definition of $\Delta_k$ in Step~\ref{line:Deltak2}, one obtains \begin{align}\label{eq:sufficient-decrease-opt2}
    &f(\xtrial_{k+1,0}) + r(\xtrial_{k+1,0}) \nonumber \\
    &\leq f(x_k) 
    + \grad f(x_k)^T s_k + \tfrac{L_g}{2}\norm{s_k}^2 + r(x_k+s_k)\nonumber \\
    &= f(x_k) + r(x_k) 
    + \grad f(x_k)^T s_k + \tfrac{L_g}{2}\norm{s_k}^2 + r(x_k+s_k) - r(x_k) \nonumber\\
    &= f(x_k) + r(x_k) + \Delta_k + \tfrac{L_g}{2}\norm{s_k}^2 
    \leq f(x_k) + r(x_k) +  \Delta_k(1-\alpha_kL_g).
\end{align}
From this inequality and $\Delta_k < 0$ (see Lemma~\ref{lemma:per-iter-gain}), we can observe that if $\alpha_k \leq (1-\eta)/L_g$, 
then $ f(\xtrial_{k+1,0}) + r(\xtrial_{k+1,0})\leq f(x_k) + r(x_k) + \eta\Delta_k$, which means that the condition checked in Line~\ref{line:armijo-pg} for $j = 0$ will not hold, which in turn 
means that $j = 0$ when Line~\ref{line:update-pg-if} is reached so that the update $\alpha_{k+1} \gets \alpha_k$ would take place. Combining this observation with the fact that the $\alpha_0$ input to the algorithm can be any positive number, and that anytime the PG parameter is decreased it is done so by a factor of $\zeta$ (see Step~\ref{line:update2b}), allows us to conclude that~\eqref{def:alphamin} holds.

As for~\eqref{bound-Kalphadecr}, an upper bound on $|\setpgDEC|$ is the smallest integer $\ell$ such that $\alpha_0\zeta^\ell \leq \alpha_{\min}$. Solving this inequality for $\ell$ shows that the result in~\eqref{bound-Kalphadecr} holds.
\end{proof}

The main theorem is now stated.  It gives an upper bound on the number of iterations performed by Algorithm~\ref{alg:main} before an approximate first-order solution is found.

\begin{theorem}\label{thm:complexity}
For any $\tau \in (0,1]$, the size of the set $\settau := \{k \in \N{} : \chipgk > \tau\}$ satisfies
\begin{equation}\label{final-set-bds}
\begin{aligned}
|\setpgSAME \cap \settau|
&\leq \cpg\tau^{-2} 
\ \ \ \text{with} \ \ \ 
\cpg := \frac{f(x_0)+r(x_0) - \inf_{x\in\R{n}} \big( f(x)+r(x)\big)}{\eta\beta\alpha_{\min}}
\end{aligned} 
\end{equation}
where $\beta$ is defined in~\eqref{def:betak} and $\alpha_{\min}$ is defined in Lemma~\ref{lem:alpha-fixed}. Moreover, combining this result with Lemma~\ref{lem:alpha-fixed} and the definition of  $c^\alpha_\downarrow$ in~\eqref{bound-Kalphadecr} shows that
\begin{equation} \label{final-complexity}
|\settau| \leq c^\alpha_\downarrow + \cpg\tau^{-2}.
\end{equation}
\end{theorem}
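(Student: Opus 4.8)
The plan is to prove the first bound in~\eqref{final-set-bds} by telescoping the guaranteed per-iteration decrease of $f+r$, and then to obtain~\eqref{final-complexity} by combining that bound with the count bound on $|\setpgDEC|$ from Lemma~\ref{lem:alpha-fixed}. First I would establish the per-iteration decrease on the iterations that matter. For any $k \in \setpgSAME \cap \settau$, combining the sufficient-decrease inequality of Lemma~\ref{lem:K-pg}(i), the lower bound $\alpha_k \geq \alpha_{\min}$ from Lemma~\ref{lem:alpha-fixed}, and the defining property $\chipgk > \tau$ of $\settau$ gives
$$
\big(f(x_k)+r(x_k)\big) - \big(f(x_{k+1})+r(x_{k+1})\big) \geq \eta\beta\alpha_k\chipgk^2 \geq \eta\beta\alpha_{\min}\tau^2 > 0,
$$
so each such iteration reduces $f+r$ by at least the fixed positive amount $\eta\beta\alpha_{\min}\tau^2$.

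Next I would sum these decreases. By Lemma~\ref{lem:K-pg}, $\{f(x_k)+r(x_k)\}$ is monotonically decreasing, so every increment is nonnegative; in particular the $\setpgDEC$ iterations contribute nonnegatively and may simply be dropped from the sum. Telescoping over all iterations and using that $f+r$ is bounded below by $\inf_{x\in\R{n}}(f(x)+r(x))$ (Assumption~\ref{ass.first}) yields
$$
f(x_0)+r(x_0) - \inf_{x\in\R{n}}\big(f(x)+r(x)\big) \geq \sum_{k \in \setpgSAME \cap \settau} \eta\beta\alpha_{\min}\tau^2 = |\setpgSAME \cap \settau|\,\eta\beta\alpha_{\min}\tau^2.
$$
If $|\setpgSAME \cap \settau|$ were infinite the right-hand side would diverge, contradicting finiteness of the left-hand side, so the set is finite; dividing by $\eta\beta\alpha_{\min}\tau^2$ and recalling the definition of $\cpg$ then gives the first bound.

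For~\eqref{final-complexity} I would use that $\setpgSAME$ and $\setpgDEC$ partition $\N{}$, so $\settau$ is the disjoint union of $\setpgSAME \cap \settau$ and $\setpgDEC \cap \settau$. Hence $|\settau| = |\setpgSAME \cap \settau| + |\setpgDEC \cap \settau| \leq |\setpgSAME \cap \settau| + |\setpgDEC|$, and bounding the first term by the already-established $\cpg\tau^{-2}$ and the second by $c^\alpha_\downarrow$ via Lemma~\ref{lem:alpha-fixed} gives the claimed estimate.

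I do not expect a serious obstacle, since the substantive work has been front-loaded into Lemmas~\ref{lem:K-pg} and~\ref{lem:alpha-fixed}. The one point requiring care is that the $\setpgDEC$ iterations carry no quantitative decrease guarantee (only strict decrease), so the telescoping argument must rely on monotonicity to \emph{discard} them rather than to bound them, while their contribution to $|\settau|$ is controlled separately through the count bound $|\setpgDEC| \leq c^\alpha_\downarrow$.
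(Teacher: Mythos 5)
Your proposal is correct and follows essentially the same argument as the paper: telescoping the per-iteration decrease from Lemma~\ref{lem:K-pg}(i) together with the bound $\alpha_k \geq \alpha_{\min}$ from Lemma~\ref{lem:alpha-fixed}, using monotonicity of $\{f(x_k)+r(x_k)\}$ to discard the iterations outside $\setpgSAME \cap \settau$, and then splitting $|\settau| = |\setpgSAME\cap\settau| + |\setpgDEC\cap\settau|$ with $|\setpgDEC| \leq c^\alpha_\downarrow$. The only cosmetic difference is that the paper sums up to a finite index $\kbar$ and then lets $\kbar \to \infty$, whereas you telescope over all iterations at once and argue finiteness by contradiction; these are interchangeable.
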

\begin{proof}
Let us define $\nu_k = f(x_k) + r(x_k) - \big( f(x_{k+1}) + r(x_{k+1})\big)$. This definition, Lemma~\ref{lem:K-pg},  Lemma~\ref{lem:alpha-fixed}, and the definition of $\settau$, it holds for arbitrary $\kbar \in \N{}$ that
\begin{equation*}
\begin{aligned}
&f(x_0) + r(x_0) - \big( f(x_{\kbar+1})+r(x_{\kbar+1}) \big)  \\
&= \sum_{0 \leq k \leq \kbar} \nu_k 
\geq \sum_{\substack{k\in\setpgSAME \cap \settau \\ 0 \leq k \leq \kbar}} \!\!\!\!\!\!\!\nu_k 
\geq \sum_{\substack{k\in\setpgSAME \cap \settau \\ 0 \leq k \leq \kbar}} \!\!\!\!\!\eta\beta\alpha_k\chipgk{^2} 
\geq \sum_{\substack{k\in\setpgSAME \cap \settau \\ 0 \leq k \leq \kbar}}\!\!\!\!\!\eta\beta\alpha_{\min}\tau^2.
\end{aligned}
\end{equation*}
Taking $\kbar\to\infty$ and using the monotonicity of the objective values in Lemma~\ref{lem:K-pg} gives
\begin{equation}\label{bd-Ksame}
f(x_0) + r(x_0) - \inf_{x\in\R{n}}  \big(f(x)+r(x) \big)
\geq |\setpgSAME \cap \settau|\eta\beta\alpha_{\min}\tau^2,
\end{equation}
which proves~\eqref{final-set-bds}. Finally, \eqref{final-set-bds},  $|\settau| = |\setpgSAME\cap \settau|+|\setpgDEC\cap \settau|$, and \eqref{bound-Kalphadecr} gives~\eqref{final-complexity}.
\end{proof}

Theorem~\ref{thm:complexity} shows that the worst-case iteration complexity for our \emph{inexact} PG method is $\Ocal(\tau^{-2})$, which is the same result that holds for the \emph{exact} PG method.  

\section{Sparse Regularizers and Finite Support Identification}\label{sec.identification}

In this section we focus our attention on the case when $r$ is chosen as the overlapping \grplone{} regularizer, whose associated proximal operator does not admit a closed-form solution.  This regularizer, which is studied in \cite{jenatton2011structured}, \cite{obozinski2011group}, and \cite{yuan2013efficient} is defined as
\begin{equation}\label{eq:natOverlap}
    r(x) = \sum_{i=1}^\ngrp [\lambda]_i\norm{[x]_{g_i}}
\end{equation}
where $\ngrp \in \N{}\setminus \{0\}$ denotes the number of groups, $\lambda\in\R{\ngrp}$ is a vector of strictly positive weights for the groups, and, for each $i\in[\ngrp]$, we use $g_i\subseteq[n]$ to denote the index set of the variables for the $i$th group and $[x]_{g_i}\in\R{|g_i|}$ to denote  the subvector of $x$ that corresponds to the elements in group $g_i$. Next, we denote the $j$th entry of the $i$th group by $g_{i,j}$ for every $i\in[\ngrp]$ and $j\in[|g_i|]$.  Finally, we assume that every variable is included in at least one group, i.e., we assume that $\cup_{i=1}^\ngrp g_i \equiv [n]$. This regularizer imposes structured sparsity on the set of solutions to  problem~\eqref{prob.main} (also see~\citep{obozinski2011group}).

Since this choice of $r$ is a special case of the more general regularizer considered in the previous section, we know that the worse-case iteration complexity of Theorem~\ref{thm:complexity} holds. Although this result is comforting, it does not shed light on whether the iterates generated by our inexact PG method can identify the support of an optimal solution.  In this section, we shall see that consideration of this topic is somewhat delicate. Indeed, no matter how accurately  problem~\eqref{def:pg-update} is approximately solved, it is not guaranteed that the approximate solution returned by Algorithm~\ref{alg:main} is sparse. Rather, one needs to exploit the geometric structure of $r(x)$ and design a specialized algorithm for approximately solving subproblem~\eqref{def:pg-update}.  

Let us formally define the support and the support identification.

\begin{definition}[support and support identification property]\label{def:id}
The \emph{support} of a point $x\in\R{n}$ with respect to $r$ in~\eqref{eq:natOverlap} is
$$
\Scal(x) := \{i \subseteq[\ngrp] ~|~ [x]_{g_i} \neq 0 \}.
$$
We say that an algorithm has the \emph{support identification property} if and only if it generates iterates $\{x_k\}$ satisfying $\Scal(x_k) = \Scal(x^*)$ for some solution $x^*\in\R{n}$ to~\eqref{prob.main} and all sufficiently large $k$.
\end{definition}

It is desirable that an algorithm has the support identification property. First, an algorithm with the support identification property is appropriate for identifying the most ``important" variables in a regression problem; an algorithm without such a property needs to perform ad-hoc post processing to obtain a sparse  estimate. Second, a solver that possesses  the support identification property is an appropriate choice to be used within second-order subspace acceleration methods (e.g., see~\cite{curtis2022}), which are known to be extremely efficient when the PG problem can be solved exactly.

Our method for approximately solving the PG subproblem, which will be proved to have the support identification property, is based on sufficiently reducing a certain primal-dual gap and using a special projection. We describe the dual formulation next.

\subsection{A dual formulation of the PG subproblem}

The PG subproblem~\eqref{def:pg-update} with regularizer $r$ given by~\eqref{eq:natOverlap} that is approximately solved during the $k$th iteration of Algorithm~\ref{alg:main} (see Line~\ref{line:triple1}, Line~\ref{line:triple2}, and~\eqref{def:ipg-update}) is given by
\begin{equation}\label{prob:primal-again}
x^*_k := \argmin{x\in\R{n}} \ \phi(x;x_k,\alpha_k)
\ \  \text{with}  \ \
\phi(x;x_k,\alpha_k) := \tfrac{1}{2\alpha_k}\norm{x-u_k}^2 + \sum_{i=1}^\ngrp [\lambda]_i\|[x]_{g_i}\|
\end{equation}
with $u_k = x_k - \alpha_k \nabla f(x_k)$.  
By comparing the definition of $x^*_k$ with~\eqref{def:pg-update}, we have 
\begin{equation}\label{def:xkstar}
x^*_k = \T(x_k,\alpha_k).
\end{equation}
Introducing a vector of auxiliary variables $z\in \R{\ngrp}$, subproblem~\eqref{prob:primal-again} is equivalent to
\begin{equation*}
\min_{x, z} \ 
 \tfrac{1}{2\alpha_k}\|x-u_k\|^2 +   \lambda^T z
 \ \ \ \text{s.t.} \ \ \
 \bbmatrix
 [x]_{g_i} \\ [z]_i
 \ebmatrix
 \in \Kcal_i
  \ \text{for all} \ i\in[\ngrp]
\end{equation*}
with 
$$
\Kcal_i
 := \left\{\bbmatrix v \\ \theta \ebmatrix 
~|~ v\in \R{|g_i|}, \theta\in\R{}, \ \text{and} \ \norm{v}\leq \theta\right\}
  \ \text{for all} \ i\in[\ngrp]
$$
so that $\Kcal_i$ is a second-order cone for each $i\in[\ngrp]$. 
Denoting the characteristic function by $\delta_{\Kcal_i}:\R{|g_i|+1}\to\{0,\infty\}$, which is defined as $\delta_{\Kcal_i}(w) = 0$ if $w\in \Kcal_i$ and is equal to $\infty$ otherwise, allows us to write this problem as 
\begin{equation}\label{prob:conic-primal-with-z}
\min_{x,z} 
\tfrac{1}{2\alpha_k}\|x-u_k\|^2 
+ \lambda^T z 
+ \sum_{i=1}^{\ngrp}  \delta_{\Kcal_i}\!\!
\left(
\bbmatrix
[x]_{g_i} \\ [z]_i
\ebmatrix
\right).
\end{equation}
Introducing a set of auxiliary vectors $\{p_i\}_{i=1}^\ngrp$  
with $p_i\in\R{|g_i|}$ for each $i\in[\ngrp]$ and an auxiliary vector $q\in\R{\ngrp}$, 
we may now rewrite~\eqref{prob:conic-primal-with-z} in the equivalent form
\begin{equation}\label{prob:conic-primal-separable}
\min_{x,z,\{p_i\},q} 
\tfrac{1}{2\alpha_k}\|x-u_k\|^2 
+ \lambda^T z 
+ \sum_{i=1}^\ngrp \delta_{\Kcal_i}
\left(
\bbmatrix p_i \\ [q]_i \ebmatrix
\right)
\ \ \text{s.t.} \ \ 
q = z \ \ \text{and} \ \
p_i = [x]_{g_i} \ \text{for all} \ i\in[\ngrp]. 
\end{equation}
If we define dual vectors $\pi\in\R{\ngrp}$ and $\{y_i\}_{i=1}^\ngrp$ with $y_i \in \R{|g_i|}$ for each $i\in [\ngrp]$, then the Lagrange function associated with the optimization problem~\eqref{prob:conic-primal-separable} takes the form 
\begin{align}
&\Lcal_k(x,z, \{p_i\},q, \pi, \{y_i\}) \label{lagrangian}\\
&:= \tfrac{1}{2\alpha_k}\|x-u_k\|^2 
+ \lambda^T z
+ \sum_{i=1}^\ngrp \delta_{\Kcal_i}
\left(
\bbmatrix p_i \\ [q]_i \ebmatrix
\right)
+ (q-z)^T\pi
+ \sum_{i=1}^\ngrp (p_i-[x]_{g_i})^T  y_i. \nonumber
\end{align}
The dual function is then given by
$$
\begin{aligned}
&\Dcal_k(\pi,\{y_i\}) 
:= \inf_{x,z,\{p_i\},q} \Lcal(x,z, \{p_i\},q,\pi,\{y_i\}) 
\\
&= \inf_{x,z}
\big\{
\tfrac{1}{2\alpha_k}\|x-u_k\|^2 
+ \lambda^Tz 
- \sum_{i=1}^\ngrp y_i^T [x]_{g_i} 
- z^T\pi 
\big\} 
- \sup_{\{p_i\},q}\ \sum_{i=1}^\ngrp \left(
-\bbmatrix y_i \\ [\pi]_i \ebmatrix^T
\bbmatrix p_i \\ [q]_i \ebmatrix
- \delta_{\Kcal_i}
\left(
\bbmatrix p_i \\ [q]_i \ebmatrix
\right)
\right)
\\
&= \inf_{x,z}
\big\{
\tfrac{1}{2\alpha_k}\|x-u_k\|^2 
+ \lambda^T z 
- \sum_{i=1}^\ngrp y_i^T [x]_{g_i} 
- z^T\pi 
\big\} 
- \sum_{i=1}^\ngrp \sup_{p_i,[q]_i}\ \left(
-\bbmatrix y_i \\ [\pi]_i \ebmatrix^T
\bbmatrix p_i \\ [q]_i \ebmatrix
- \delta_{\Kcal_i}
\left(
\bbmatrix p_i \\ [q]_i \ebmatrix
\right)
\right)
\\
&= \inf_{x,z}
\big\{
\tfrac{1}{2\alpha_k}\|x-u_k\|^2 
+ \lambda^T z 
- \sum_{i=1}^\ngrp y_i^T [x]_{g_i} 
- z^T\pi 
\big\} 
- \sum_{i=1}^\ngrp
\sup_{p_i, [q]_i}
\left\{ 
-\bbmatrix y_i \\ [\pi]_i \ebmatrix^T
\bbmatrix p_i \\ [q]_i \ebmatrix
~\middle|~
\bbmatrix p_i \\ [q]_i \ebmatrix\in\Kcal_i 
\right\}.
\end{aligned}
$$
If, for each $i\in[\ngrp]$, we define $\ybar_i\in\R{n}$ so that $[\ybar_i]_{g_i} = y_i$ and all other elements equal to zero, then it follows from the optimality conditions for the infimum problem above and the structure of the second-order cone $\Kcal_i$ appearing in the supremum that
\begin{align*}
\Dcal_k(\pi,\{y_i\})
&=
\begin{cases}
-\infty & \text{if $\pi\neq\lambda$ or  $\bbmatrix y_i \\ [\pi]_i \ebmatrix \notin\Kcal_i$ for any $i\in[\ngrp]$,} \\
-\frac{\alpha_k}{2} \norm{\sum_{i=1}^\ngrp \ybar_i}^2
- u_k^T (\sum_{i=1}^\ngrp \ybar_i) & \text{if $\pi=\lambda$ and $\bbmatrix y_i \\ [\pi]_i \ebmatrix \in\Kcal_i$ for all $i\in[\ngrp]$.}
\end{cases} 
\end{align*}
Thus, the dual problem obtained by maximizing the  function $\Dcal_k$ can be written as
\begin{equation}\label{dual-conic}
 \max_{\{y_i\}} \ 
 -\tfrac{\alpha_k}{2} 
 \Big\| \sum_{i=1}^\ngrp \ybar_i\Big\|^2
 - u_k^T \Big(\sum_{i=1}^\ngrp \ybar_i\Big) 
  \ \ \text{s.t.} \ \  
  \text{$\bbmatrix y_i \\ [\lambda]_i \ebmatrix \in \Kcal_i$
  for each $i\in[\ngrp]$.} 
\end{equation}
Next, note that if we define the matrix $A\in\R{n\times\sum_{i=1}^{\ngrp}|g_i|}$ such that
\begin{equation}\label{def:A}
[A]_{(g_{i,j}),j+\sum_{\ell=1}^{i-1}|g_\ell|} = 1 
\ \ 
\text{for all $i\in[\ngrp]$ and $j\in[|g_i|]$}
\end{equation}
and all other entries of $A$ are set equal to zero,
then it follows that
\begin{equation}\label{ident}
A\yhat = \sum_{i=1}^\ngrp \ybar_i
\ \ \text{where} \ \
\yhat :=
(y_1^T, y_2^T, \dots, y_{\ngrp}^T)^T\in\R{\sum_{i=1}^\ngrp |g_i|}.
\end{equation}
Introducing the set-valued mapping $\Mcal:\{1,2,\dots,\ngrp\} \rightrightarrows \{1,2,\dots,\sum_{i=1}^\ngrp |g_i|\}$ so that
\begin{equation}\label{def:M}
\Mcal(i)=
\left\{
\sum_{l=1}^{i-1}|g_l| + 1, 
\sum_{l=1}^{i-1}|g_l| + 2, \
\dots, \
\sum_{l=1}^{i-1}|g_l| + |g_i|
\right\},
\end{equation}
it follows that
$
y_i = 
[\yhat]_{\Mcal(i)}
$
for all $i\in[\ngrp]$. Using \eqref{ident} and~\eqref{dual-conic} yields the dual problem
\begin{equation}\label{prob:dual}
\Ycalhat(x_k,\alpha_k)
:= \Argmax{\yhat\in\Fcal_d} \ \phid(\yhat;x_k,\alpha_k) 
\ \ \text{with} \ \ 
\phid(\yhat;x_k,\alpha_k) 
:= -\tfrac{\alpha_k}{2} \| A\yhat\|^2- u_k^T A\yhat 
\end{equation}
where we recall that $u_k = x_k - \alpha_k \nabla f(x_k)$ and we define the dual feasibility set
\begin{equation}\label{K-lambda-ng}
\Fcal_d :=
\{\yhat\in\R{\sum_{i=1}^\ngrp |g_i|} ~|~ \|[\yhat]_{\Mcal(i)}\| \leq [\lambda]_i \text{ for each } i\in[\ngrp] \}.
\end{equation}
We use capital ``Argmax" in~\eqref{prob:dual} to emphasize that $\Ycalhat(x_k,\alpha_k)$ is a set. Note that strong duality holds for the primal problem~\eqref{prob:primal-again} and dual problem~\eqref{prob:dual} since Slater's condition holds for problem~\eqref{prob:dual} (since the components of $\lambda$ are all \emph{strictly} positive).   


We now establish results related to the dual solution set $\Ycalhat(x_k, \alpha_k)$ of problem~\eqref{prob:dual}. The first lemma establishes an important equation (the linking equation) that connects a dual solution to problem~\eqref{prob:dual} and the primal solution $x_k^*$ to problem~\eqref{prob:primal-again}.

\begin{lemma}\label{lemma:link}
The following results hold for the set of dual solutions.
\begin{itemize}
\item[(i)] The set of dual solutions $\Ycalhat(x_k,\alpha_k)$ for problem~\eqref{prob:dual} satisfies
\begin{equation}\label{eq:dual-prox-solset-form2}
\Ycalhat(x_k,\alpha_k)=\{\yhat\in\Fcal_d~|~A\yhat = (x^*_k - u_k)/\alpha_k\},
\end{equation}
where $A$ is defined in~\eqref{def:A} and $x_k^*$ is the primal solution defined in \eqref{def:xkstar}.
\item[(ii)] The solution $x^*_k$ to problem~\eqref{def:xkstar} satisfies  $x^{*}_k = u_k + \alpha_k A\yhat^{*}_k$ for all $\yhat^*_k \in\Ycalhat(x_k,\alpha_k)$. 
\item[(iii)] If $i\in[\ngrp]$ and $\yhat^*_k \in\Ycalhat(x_k,\alpha_k)$ satisfying $\norm{[\yhat^*_k]_{\Mcal(i)}} < [\lambda]_i$, then  $[x^*_k]_{g_i}=0$.
\end{itemize}
\end{lemma}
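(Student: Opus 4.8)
The plan is to prove part (ii) first, deduce part (i) as an immediate consequence, and then obtain part (iii) from a complementary-slackness relation that I will recover in \emph{aggregate} form. The linchpin of the whole argument is the stationarity of the Lagrangian \eqref{lagrangian} with respect to the variable $x$, together with the fact that strong duality holds for \eqref{prob:primal-again}--\eqref{prob:dual} and both the primal and dual optima are attained (the primal by strong convexity of $\phi(\cdot;x_k,\alpha_k)$, and the dual since $\Fcal_d$ in \eqref{K-lambda-ng} is compact and $\phid$ is continuous, so $\Ycalhat(x_k,\alpha_k)\neq\emptyset$).

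For part (ii), I would argue as follows. Because strong duality holds and both optima are attained, the unique primal optimizer $x^*_k$ and \emph{any} dual optimizer $\yhat^*_k$ together form a saddle point of $\Lcal_k$; in particular $x^*_k$ minimizes $\Lcal_k$ over the primal variables. Inspecting \eqref{lagrangian}, the only terms that involve $x$ are $\tfrac{1}{2\alpha_k}\norm{x-u_k}^2 - (A\yhat^*_k)^T x$, where I have used \eqref{ident} to write $\sum_{i} [x]_{g_i}^T [\yhat^*_k]_{\Mcal(i)} = (A\yhat^*_k)^T x$. This function is strongly convex in $x$ and hence is minimized at the unique point $x = u_k + \alpha_k A\yhat^*_k$; equating this minimizer with $x^*_k$ gives $x^*_k = u_k + \alpha_k A\yhat^*_k$, which is part (ii). Part (i) is then immediate: part (ii) shows every $\yhat^*_k\in\Ycalhat(x_k,\alpha_k)$ satisfies $A\yhat^*_k = (x^*_k-u_k)/\alpha_k$, giving the inclusion ``$\subseteq$'' in \eqref{eq:dual-prox-solset-form2}; conversely, $\phid(\yhat;x_k,\alpha_k)$ depends on $\yhat$ only through $A\yhat$, so any $\yhat\in\Fcal_d$ with $A\yhat=(x^*_k-u_k)/\alpha_k$ attains the same (maximal) value as $\yhat^*_k$ and is therefore itself a maximizer.

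For part (iii), the naive route---reading per-group complementarity directly off the inclusion $-A\yhat^*_k\in\partial r(x^*_k)$ (which is the prox optimality condition $\tfrac{u_k-x^*_k}{\alpha_k}\in\partial r(x^*_k)$ rewritten via part (ii))---fails, and this is the main obstacle: since the groups $\{g_i\}$ overlap, the decomposition of a subgradient of $r$ into its group contributions is \emph{not} unique, so one cannot simply declare $-[\yhat^*_k]_{\Mcal(i)}$ to be the group-$i$ piece. I would instead first establish the single scalar identity $(A\yhat^*_k)^T x^*_k = -r(x^*_k)$. This follows from $-A\yhat^*_k\in\partial r(x^*_k)$ together with positive homogeneity of $r$: evaluating the subgradient inequality at $x=0$ and at $x=2x^*_k$, and using $r(0)=0$ and $r(2x^*_k)=2r(x^*_k)$, pins the inner product to $-r(x^*_k)$ from both sides.

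Writing $y^*_i := [\yhat^*_k]_{\Mcal(i)}$ and invoking \eqref{ident}, this identity reads $\sum_{i=1}^\ngrp (y^*_i)^T [x^*_k]_{g_i} = -\sum_{i=1}^\ngrp [\lambda]_i\norm{[x^*_k]_{g_i}}$. Each left-hand summand obeys $(y^*_i)^T[x^*_k]_{g_i} \geq -\norm{y^*_i}\norm{[x^*_k]_{g_i}} \geq -[\lambda]_i\norm{[x^*_k]_{g_i}}$ by Cauchy--Schwarz and dual feasibility $\norm{y^*_i}\leq[\lambda]_i$; since the sum of the terms equals the sum of their lower bounds, every inequality must be tight, so $(y^*_i)^T[x^*_k]_{g_i} = -[\lambda]_i\norm{[x^*_k]_{g_i}}$ for each $i$. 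Finally, if both $\norm{[\yhat^*_k]_{\Mcal(i)}}<[\lambda]_i$ and $[x^*_k]_{g_i}\neq 0$ held for some $i$, the first Cauchy--Schwarz bound would be \emph{strict}, contradicting this per-group equality; hence $[x^*_k]_{g_i}=0$, establishing part (iii). The only delicate point is recognizing that the aggregate identity plus nonnegativity of the slacks recovers the per-group complementarity that overlapping supports otherwise obscure.
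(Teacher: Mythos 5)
Your proposal is correct, and parts (i)--(ii) are essentially the paper's own argument, just reordered: the paper derives $x^*_k = u_k + \alpha_k A\yhat^*_k$ from stationarity of the Lagrangian \eqref{lagrangian} in $x$ at a primal--dual pair (its part (i) ``$\subseteq$'' inclusion), then gets the reverse inclusion exactly as you do, from the fact that $\phid(\cdot;x_k,\alpha_k)$ depends on $\yhat$ only through $A\yhat$; your saddle-point phrasing of the stationarity step is a slightly more careful justification of the same computation. Where you genuinely diverge is part (iii). The paper argues via first-order optimality of the \emph{dual} problem \eqref{prob:dual}: since the block constraint $\norm{[\yhat]_{\Mcal(i)}}\leq[\lambda]_i$ is inactive at $\yhat^*_k$, the partial gradient $\nabla_{[\yhat]_{\Mcal(i)}}\phid(\yhat^*_k;x_k,\alpha_k)$ must vanish, and the structural identities $A_{[g_i,\Mcal(i)]}=I$, $A_{[g_i^c,\Mcal(i)]}=0$ convert this into $[u_k+\alpha_k A\yhat^*_k]_{g_i}=[x^*_k]_{g_i}=0$. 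You instead recover per-group complementarity from the aggregate identity $(A\yhat^*_k)^Tx^*_k=-r(x^*_k)$ (obtained from $-A\yhat^*_k\in\partial r(x^*_k)$ and positive homogeneity of $r$), split it group-wise using Cauchy--Schwarz and dual feasibility, and use tightness of the sum to force tightness in each group. Both are valid; the paper's route is shorter once the structure of $A$ is in hand, while yours is more elementary (no gradient of the constrained dual, no matrix structure beyond \eqref{ident}) and makes explicit why overlapping groups are not an obstruction, which the paper leaves implicit. One small correction: in your final contradiction it is not the Cauchy--Schwarz inequality that becomes strict (that one can be tight even when $\norm{[\yhat^*_k]_{\Mcal(i)}}<[\lambda]_i$), but the second inequality $-\norm{[\yhat^*_k]_{\Mcal(i)}}\,\norm{[x^*_k]_{g_i}}\geq-[\lambda]_i\norm{[x^*_k]_{g_i}}$, which is strict precisely when $\norm{[\yhat^*_k]_{\Mcal(i)}}<[\lambda]_i$ and $[x^*_k]_{g_i}\neq0$; the chained inequality is then strict either way, so your conclusion stands.
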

\begin{proof}
We begin with part (i). To prove~\eqref{eq:dual-prox-solset-form2} we first show that $\Ycalhat(x_k,\alpha_k) \subseteq \{\yhat\in\Fcal_d~|~A\yhat = (x^*_k - u_k)/\alpha_k\}$.  To this end, let $\yhat^*_k \in\Ycalhat(x_k,\alpha_k)$, and thus $\yhat^*_k\in\Fcal_d$ (see~\eqref{prob:dual}).  Since $x^*_k$ is the unique solution to the primal problem, we know that $(x^*_k, \yhat^*_k)$ is a primal-dual solution pair.  Therefore, it follows from first-order necessary optimality conditions obtained by setting the derivative (with respect to $x$) of the Lagrangian~\eqref{lagrangian} to zero that  $x^{*}_k = u_k + \alpha_k A\yhat^{*}_k$, where $A$ is defined in~\eqref{def:A}.  Since this equality and $\yhat^*_k\in\Fcal_d$ imply that $\yhat^*_k$ belongs to the set in the right-hand side of~\eqref{eq:dual-prox-solset-form2}, we have proved the first inclusion.  Now, to establish the inclusion  $\Ycalhat(x_k,\alpha_k) \supseteq \{\yhat\in\Fcal_d~|~A\yhat = (x^*_k - u_k)/\alpha_k\}$, let $\yhat^*_k$ satisfies  $\yhat^*_k\in\Fcal_d$ and $A\yhat^*_k = (x^*_k - u_k)/\alpha_k$.  Since $x^*_k$ is the unique solution to the primal problem and strong duality holds for the primal-dual pair, we know that there must exist $\yhat_{\text{sol}} \in \Ycalhat(x_k,\alpha_k)$.  Combining this with the inclusion that we already proved shows that $\yhat_{\text{sol}}$ must satisfy $A\yhat_{\text{sol}} = (x^*_k - u_k)/\alpha_k$. Combining this with the definition of the dual function $\phid(\cdot;x_k,\alpha_k)$ and  $A\yhat^*_k = (x^*_k - u_k)/\alpha_k$ shows that
\begin{align*}
\phid(\yhat^*_k;x_k,\alpha_k)
&= -\tfrac{\alpha_k}{2}\|A\yhat^*_k\|^2 - u_k^T A \yhat^*_k \\
&= -\tfrac{\alpha_k}{2}\|(x^*_k - u_k)/\alpha_k\|^2 - u_k^T (x^*_k - u_k)/\alpha_k  \\
&= -\tfrac{\alpha_k}{2}\|A\yhat_{\text{sol}}\|^2 - u_k^T A \yhat_{\text{sol}} 
= \phid(\yhat_{\text{sol}};x_k,\alpha_k).
\end{align*}
This equation and $\yhat^*_k\in\Fcal_d$ imply that $\yhat^*_k \in\Ycalhat(x_k,\alpha_k)$, thus completing the proof of this inclusion.  Since we have established both inclusions, we know that~\eqref{eq:dual-prox-solset-form2} holds. 

For (ii), let $\yhat^*_k \in\Ycalhat(x_k,\alpha_k)$.  Then, (i) implies  $A\yhat^*_k = (x^*_k - u_k)/\alpha_k$ so that (ii) holds.

For (iii), let $i\in[\ngrp]$ and $\yhat^*_k \in\Ycalhat(x_k,\alpha_k)$ satisfy $\norm{[\yhat^*_k]_{\Mcal(i)}} < [\lambda]_i$.  From $\yhat^*_k\in\Ycalhat(x_k,\alpha_k)$, first-order optimality conditions for~\eqref{prob:dual}, and  $\norm{[\yhat_k^*]_{\Mcal(i)}}<[\lambda]_i$ we have
\begin{equation}\label{eq:grad-zero}
\nabla_{[\hat y]_{\Mcal(i)}} \phid(\yhat_k^*;x_k,\alpha_k) = 0 
\ \ \Rightarrow \ \ \left[A^T(\alpha_k A\yhat_k^* + u_k)\right]_{\Mcal(i)}=0.
\end{equation}
The definition of $A$ gives $A_{g_i,\Mcal(i)} = I$ and $A_{g_i^c,\Mcal(i)} = 0$ , where $I$ is the identify matrix of size $|g_i|$ and $g_i^c$ is the complement  of $g_i$. Combining these facts with~\eqref{eq:grad-zero} gives
$$
0 
= \left[A^T(\alpha_k A\yhat_k^* + u_k)\right]_{\Mcal(i)} 
= [A^T]_{\Mcal(i),g_i}\left[\alpha_k A\yhat_k^* + u_k\right]_{g_i} 
= \left[\alpha_k A\yhat_k^* + u_k\right]_{g_i}.
$$
Combining this equation with $x^{*}_k = u_k + \alpha_k A\yhat^{*}_k$ implies that  $[x_k^*]_{g_i}=0$, as claimed.
\end{proof}

We now bound the distance between feasible points and the solution set $\Ycalhat(x_k, \alpha_k)$.

\begin{lemma}\label{leamma:feasbound}
The set $\Ycalhat(x_k, \alpha_k)$ is compact and convex, so that in particular the projection operator $\proj{\Ycalhat(x_k,\alpha_k)}{\cdot}$ and $\dist(\cdot,\Ycalhat(x_k,\alpha_k))$ are well defined.
Moreover, there exists $\nu_k>0$ and $\rho_k>0$ such that 
$$
\dist\big(\yhat,\Ycalhat(x_k,\alpha_k)\big)
\leq \nu_k \| A\yhat-A y \|^{\rho_k}
\ \ \text{for all} \ \ (\yhat,y)\in \big(\Fcal_d, \Ycalhat(x_k,\alpha_k) \big).
$$
\end{lemma}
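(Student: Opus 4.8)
The plan is to handle the topological assertions first and then the Hölderian error bound, which is the heart of the statement. \emph{Compactness and convexity.} The feasible set $\Fcal_d$ in~\eqref{K-lambda-ng} is the intersection over $i\in[\ngrp]$ of the closed Euclidean balls $\{\yhat : \norm{[\yhat]_{\Mcal(i)}} \leq [\lambda]_i\}$, so it is closed, bounded (hence compact), and convex. The dual objective $\phid(\cdot;x_k,\alpha_k)$ is continuous and concave, since its Hessian $-\alpha_k A^TA$ is negative semidefinite. Therefore its maximum over the compact set $\Fcal_d$ is attained (so $\Ycalhat(x_k,\alpha_k)$ is nonempty), and writing $\phid^\star$ for this maximal value we have $\Ycalhat(x_k,\alpha_k) = \Fcal_d \cap \{\yhat : \phid(\yhat;x_k,\alpha_k) \geq \phid^\star\}$, which is the intersection of two closed convex sets and is a subset of the compact set $\Fcal_d$. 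Hence $\Ycalhat(x_k,\alpha_k)$ is compact and convex, and $\proj{\Ycalhat(x_k,\alpha_k)}{\cdot}$ and $\dist(\cdot,\Ycalhat(x_k,\alpha_k))$ are well defined.

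\emph{Reduction and error bound.} By Lemma~\ref{lemma:link}(i), $\Ycalhat(x_k,\alpha_k) = \{\yhat\in\Fcal_d : A\yhat = b_k\}$, where $b_k := (x^*_k - u_k)/\alpha_k$ is a fixed vector that, by Lemma~\ref{lemma:link}(ii), equals $Ay$ for \emph{every} $y\in\Ycalhat(x_k,\alpha_k)$. Consequently $\norm{A\yhat - Ay} = \norm{A\yhat - b_k}$ for all $y\in\Ycalhat(x_k,\alpha_k)$, so the claimed inequality is equivalent to
$$
\dist(\yhat,\Ycalhat(x_k,\alpha_k)) \leq \nu_k \norm{A\yhat - b_k}^{\rho_k} \quad \text{for all } \yhat\in\Fcal_d.
$$
To establish this I would invoke the semialgebraic {\L}ojasiewicz inequality. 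The set $\Fcal_d$ is compact and semialgebraic, because each constraint $\norm{[\yhat]_{\Mcal(i)}}\leq[\lambda]_i$ is the polynomial inequality $\norm{[\yhat]_{\Mcal(i)}}^2 \leq [\lambda]_i^2$; and $\Ycalhat(x_k,\alpha_k)$ is semialgebraic, being the intersection of $\Fcal_d$ with the affine set $\{A\yhat = b_k\}$. Hence both $\yhat\mapsto\norm{A\yhat - b_k}$ and $\yhat\mapsto\dist(\yhat,\Ycalhat(x_k,\alpha_k))$ are continuous semialgebraic functions on $\Fcal_d$ (the latter by Tarski--Seidenberg, since the distance to a semialgebraic set is semialgebraic), and by Lemma~\ref{lemma:link}(i) their zero sets within $\Fcal_d$ both coincide with $\Ycalhat(x_k,\alpha_k)$. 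The {\L}ojasiewicz inequality on the compact semialgebraic set $\Fcal_d$ then provides constants $c>0$ and $N>0$ with $\dist(\yhat,\Ycalhat(x_k,\alpha_k))^N \leq c\,\norm{A\yhat - b_k}$ for all $\yhat\in\Fcal_d$; setting $\nu_k := c^{1/N}$ and $\rho_k := 1/N$ finishes the proof.

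\emph{Main obstacle.} The topology is routine; the substance is the error bound, and the two points requiring care are (i) the reduction to the single-variable residual $\norm{A\yhat - b_k}$, which hinges on $Ay$ being constant over the solution set (Lemma~\ref{lemma:link}), and (ii) the verification of the semialgebraic hypotheses needed to apply the {\L}ojasiewicz inequality globally on the compact set $\Fcal_d$. An alternative, more constructive route would derive a Hoffman-type (metric subregularity) error bound directly from the second-order-cone description of $\Fcal_d$ together with the affine subspace $\{A\yhat = b_k\}$, which would likely produce an explicit exponent such as $\rho_k = 1/2$; this, however, demands considerably more effort than the {\L}ojasiewicz argument, which yields the stated existential result at once.
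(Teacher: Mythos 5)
Your proof is correct, and in substance it travels the same road as the paper's, but with different packaging of the key tool, so a brief comparison is worthwhile. For the error bound, the paper simply invokes the Luo--Pang error bound for analytic systems (\cite[Theorem 2.2]{luo1994error}) with $\bar X = \Fcal_d$, $f_i(\yhat)=\norm{[\yhat]_{\Mcal(i)}}^2-[\lambda]_i^2$, and $g(\yhat)=A\yhat-(x^*_k-u_k)/\alpha_k$, whose conclusion is exactly the stated H\"olderian bound on $\Fcal_d$; it then uses Lemma~\ref{lemma:link}(ii) to rewrite $(x^*_k-u_k)/\alpha_k$ as $Ay$ for any dual solution $y$. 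You instead perform that same reduction to the residual $\norm{A\yhat-b_k}$ first (via Lemma~\ref{lemma:link}(i)--(ii)) and then rederive the bound from the underlying semialgebraic {\L}ojasiewicz inequality, which requires the extra verifications you correctly supply: semialgebraicity of $\Fcal_d$ and of $\Ycalhat(x_k,\alpha_k)$, semialgebraicity of the distance function via Tarski--Seidenberg, and the coincidence of the two zero sets inside $\Fcal_d$. Since the Luo--Pang theorem is itself proved by {\L}ojasiewicz-type arguments, the mathematical content is the same; the paper's route is shorter because the cited theorem's hypotheses match the structure of $\Ycalhat(x_k,\alpha_k)$ verbatim, while yours is more self-contained and makes the mechanism visible. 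For compactness and convexity the paper reads both properties off the representation $\Ycalhat(x_k,\alpha_k)=\{\yhat\in\Fcal_d : A\yhat=(x^*_k-u_k)/\alpha_k\}$ from Lemma~\ref{lemma:link}(i) (affine slice of a compact convex set), whereas you argue via the argmax of a continuous concave function over a compact convex set; both are valid. One cosmetic point: the sets $\{\yhat : \norm{[\yhat]_{\Mcal(i)}}\leq[\lambda]_i\}$ are cylinders over balls rather than balls, so boundedness of $\Fcal_d$ follows because the blocks $\Mcal(i)$ defined in~\eqref{def:M} cover every coordinate, making $\Fcal_d$ a product of balls; your conclusion stands, but the justification should mention this covering.
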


\begin{proof}
From the definition of $\Ycalhat(x_k,\alpha_k)$ in~\eqref{eq:dual-prox-solset-form2} it is clear that $\Ycalhat(x_k,\alpha_k)$ is compact and convex, which proves the first part of the lemma. Next, by applying~\cite[Theorem 2.2]{luo1994error} with $X=\R{\sum_{i=1}^{\ngrp}|g_i|}$, $\bar X=\Fcal_d$, $f_i(\yhat)=\norm{[\yhat]_{\Mcal(i)}}^2-[\lambda]_i^2$ for all $i\in[\ngrp]$, $g(\yhat)=A\yhat-  (x^*_k - u_k)/\alpha_k$, it follows that there exists $\nu_k>0$ and $\rho_k>0$ such that 
$$
\dist(\yhat, S_k) \leq \nu_k\norm{A\yhat-(x^*_k - u_k)/\alpha_k}^{\rho_k} \text{ for all } \yhat\in \Fcal_d,
$$
where $S_k=\{\yhat\in\R{\sum_{i=1}^{\ngrp}|g_i|}~|~f_i(\yhat)\leq 0 \text{ for all } i\in[\ngrp] \text{ and } g(\yhat)=0\}$. Since $S_k \equiv \Ycalhat(x_k,\alpha_k)$ (see~\eqref{eq:dual-prox-solset-form2}), one can use the previous equality and  Lemma~\ref{lemma:link}(ii) to conclude 
\begin{align*}
\dist\big(\yhat, \Ycalhat(x_k,\alpha_k)\big) 
&\leq \nu_k\norm{A\yhat-(x^*_k - u_k)/\alpha_k}^{\rho_k} \\
&= \nu_k \|A\yhat-A\yhat^*_k\|^{\rho_k}
\ \ \text{for all} \ \ (\yhat,\yhat^*_k)\in \big(\Fcal_d,\Ycalhat(x_k,\alpha_k)\big),
\end{align*}
thus completing the proof.
\end{proof}

\subsection{A new algorithm for approximately solving the PG subproblem}

During the $k$th iteration of  Algorithm~\ref{alg:main}, it is necessary to compute $\xhat_{k+1}$ satisfying $\xhat_{k+1}\in\Tepsk(x_k,\alpha_k)$ for a particular value of $\epsk$. One way to perform  this task is to apply an iterative solver to the dual subproblem~\eqref{prob:dual} while monitoring the primal-dual gap.  For example, \eqref{prob:dual} can be approximately solved by the projected gradient-ascent algorithm with arc search~\cite[Section 2.3]{bertsekas1999nonlinear} armed with an appropriate early termination test.  Unfortunately, the straightforward application of such a method to the \emph{dual} problem is insufficient for discovering the support of an optimal solution to the \emph{primal} problem.  Therefore, in this section we present Algorithm~\ref{alg:dual-gradient-descent}, which is an enhanced projected gradient-ascent algorithm that we prove terminates with an $x_{k+1}\in\Tepsk(x_k,\alpha_k)$ and (asymptotically) to correctly identify the support of an optimal solution to problem~\eqref{prob.main} under a nondegeneracy assumption. 

Since we propose to use Algorithm~\ref{alg:dual-gradient-descent} as the subproblem solver to be used during the $k$th iteration of Algorithm~\ref{alg:main}, we denote its primal and dual iterate  sequences by $\{\xhat_{k,t},\yhat_{k,t}\}_{t\geq 0}$ where $t$ is the iteration counter of the subproblem solver. Given the $t$th dual iterate $\yhat_{k,t}$, motivated by Lemma~\ref{lemma:link} we compute a group index set $\Pcal_{k,t}$, a primal trial iterate $x_{k,t}$, and a projected primal trial iterate $\xhat_{k,t}$ in Line~\ref{line:pset} and Line~\ref{line:proj-primal}. The group index set $\Pcal_{k,t}$ holds the groups \emph{predicted} to be zero at a primal solution, and $\xhat_{k,t}$ is constructed by zeroing out all groups in $\Pcal_{k,t}$ so that $\xhat_{k,t}$ is at least as sparse as $x_{k,t}$. We draw your attention to the usage of $\epsilon_{k-1}^\iota$ appearing in the definition of $\Pcal_{k,t}$ in  Line~\ref{line:pset}, which is critical to ensuring that Algorithm~\ref{alg:dual-gradient-descent} is well posed and that our ultimate complexity result for support identification holds. Lines~\ref{line:inexact-start}--\ref{line:inexact-end} check for termination of the subproblem solver, the conditions of which are chosen to ensure that $\xhat_{k+1}\in \Tcal_{\epsk}(x_k, \alpha_k)$ anytime termination occurs (see~Lemma~\ref{lemma:well-pose-pgd} below and Lines~\ref{line:triple1} and~\ref{line:triple2} in Algorithm~\ref{alg:main}). If termination does not occur during the $t$th iteration, a standard projected gradient-ascent search is performed from Lines~\ref{line:pgd-search-start}--\ref{line:pgd-search-end} to compute $\yhat_{k,t+1}$. (There is nothing special about us using the projected gradient-ascent search; we simply need the sequence $\{\yhat_{k,t}\}_{t\geq 0}$ to converge to the solution set $\Ycalhat(x_k,\alpha_k)$.) 

\balgorithm[!th]
  \caption{An enhanced projected gradient-ascent method for solving problem~\eqref{prob:dual}.}
  \label{alg:dual-gradient-descent}
  \balgorithmic[1]
    \State \textbf{Input:} An initial dual solution estimate $\yhat_{k,0}\in\R{\sum_{i=1}^\ngrp |g_i|}$.  
    \State \textbf{Constants:} $\{\eta_2,\xi_2\}\subset (0,1)$ and $\{\sigma,\iota\} \subset (0,\infty)$
    \State \textbf{Values from Algorithm~\ref{alg:main}:} $x_k$, $\alpha_k$, $c_k$, $\epsilon_{k-1}$, $\opt{}$, and $\gamma_2$.
    \State Set $t \gets 0$, define $A$ as in~\eqref{def:A}, and set $u_k \gets x_k - \alpha_k \nabla f(x_k)$.
    \For{$t=1,2,\dots$}
    \State Set $\Pcal_{k,t} \gets \{i\in[\ngrp]~|~ \|[\yhat_{k,t}]_{\Mcal(i)}\| < [\lambda]_i - \epsilon_{k-1}^\iota\}$ with $\Mcal$ defined in~\eqref{def:M}.\label{line:pset}
    \State Define the trial primal iterate and trial  projected primal iterate as\label{line:proj-primal}
    $$
    x_{k,t} \gets u_k + \alpha_k  A\yhat_{k,t}  \ \ \text{and} \ \
    [\xhat_{k,t}]_{g_i} \gets
    \begin{cases}
    0, & \text{if $i\in\Pcal_{k,t}$}, \\
    [x_{k,t}]_{g_i}, & \text{if $i\notin\Pcal_{k,t}$,}
    \end{cases}
    \text{ for each } i\in[\ngrp].
    $$
    \If{ $(\opt{}=\optone{})$ }\label{line:inexact-start}
       \If{$\phi(\xhat_{k,t};x_k,\alpha_k) - \phid(\yhat_{k,t};x_k,\alpha_k) \leq  c_k\|\xhat_{k,t}-x_k\|^2$} \label{line:pgd-term11}
          \State \textbf{return} $\yhat_{k+1}=\yhat_{k, t}$ and $\xhat_{k+1}=\xhat_{k,t}$ 
        \EndIf
    \Else{ $(\opt{}=\opttwo{})$ }
        \If{$\phi(\xhat_{k,t};x_k,\alpha_k) - \phid(\yhat_{k,t};x_k,\alpha_k) \leq  \gamma_2(\phi(x_k;x_k,\alpha_k)-\phid(\yhat_{k,t};x_k,\alpha_k)$}\label{line:pgd-term21}
           \State \textbf{return} $\yhat_{k+1}=\yhat_{k, t}$ and $\xhat_{k+1}=\xhat_{k,t}$ 
        \EndIf      
    \EndIf   \label{line:inexact-end}
    \State $j \gets 0$.\label{line:pgd-search-start}
    \Loop 
    \State Set the trial step length $\sigma_{k,t} \gets \sigma\xi_2^j$.
    \State Set the trial iterate $\yhat_{k,t+1} \gets \proj{\Fcal_d}{\yhat_{k,t} + \sigma_{k,t}\nabla \phid(\yhat_{k,t};x_k,\alpha_k)}$. 
    \If{\!$\phid(\yhat_{k,t+1};x_k,\alpha_k)  
    \! \geq \! \phid(\yhat_{k,t};x_k,\alpha_k) 
     + \eta_2 \nabla \phid(\yhat_{k,t};x_k,\alpha_k)^T \!(\yhat_{k,t+1}  -  \yhat_{k,t})$\!} 
    \State break the inner loop
    \EndIf
    \State $j\gets j+1$.
    \EndLoop \label{line:pgd-search-end}
    \EndFor
  \ealgorithmic
\ealgorithm

We conclude this section by proving that  Algorithm~\ref{alg:dual-gradient-descent} is well posed.

\begin{lemma}\label{lemma:well-pose-pgd}
For each $k\in\N{}$, if $x_k$ does not satisfy the first-order optimality conditions for problem~\eqref{prob.main} with regularizer given by~\eqref{eq:natOverlap}, then  
Algorithm~\ref{alg:dual-gradient-descent} terminates finitely with $\xhat_{k+1}\in \Tcal_{\epsilon_k}(x_k, \alpha_k)$ with $\epsilon_k$ defined in Line~\ref{line:triple1} of Algorithm~\ref{alg:main} if \opt{}=\optone{} or with $\epsilon_k$ defined in Line~\ref{line:triple2} of Algorithm~\ref{alg:main} if \opt{}=\opttwo{}.
\end{lemma}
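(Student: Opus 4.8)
The plan is to split the claim into two parts: \emph{correctness upon termination}, i.e., that whenever a test in Line~\ref{line:pgd-term11} or Line~\ref{line:pgd-term21} triggers a \textbf{return}, the returned vector $\xhat_{k+1}=\xhat_{k,t}$ satisfies $\xhat_{k+1}\in\Tcal_{\epsilon_k}(x_k,\alpha_k)$; and \emph{finite termination}, i.e., that one of these tests is satisfied after finitely many outer iterations $t$. The engine for both parts is the combination of weak and strong duality for \eqref{prob:primal-again}--\eqref{prob:dual}: for every $\yhat\in\Fcal_d$ and every $x\in\R{n}$,
\begin{equation*}
\phid(\yhat;x_k,\alpha_k)\leq \phi(\T(x_k,\alpha_k);x_k,\alpha_k)\leq \phi(x;x_k,\alpha_k),
\end{equation*}
with equality throughout at a primal--dual solution pair.

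Correctness upon termination is then direct. If $\opt{}=\optone{}$ and Line~\ref{line:pgd-term11} triggers, then $s_k=\xhat_{k,t}-x_k$ makes $\epsilon_k=c_k\|s_k\|^2=c_k\|\xhat_{k,t}-x_k\|^2$, so the test reads $\phi(\xhat_{k,t};x_k,\alpha_k)-\phid(\yhat_{k,t};x_k,\alpha_k)\leq\epsilon_k$; combining with $\phid(\yhat_{k,t};x_k,\alpha_k)\leq\phi(\T(x_k,\alpha_k);x_k,\alpha_k)$ yields $\phi(\xhat_{k,t};x_k,\alpha_k)\leq\phi(\T(x_k,\alpha_k);x_k,\alpha_k)+\epsilon_k$, i.e.\ $\xhat_{k,t}\in\Tcal_{\epsilon_k}(x_k,\alpha_k)$. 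If $\opt{}=\opttwo{}$ and Line~\ref{line:pgd-term21} triggers, I would rearrange the test to $\phi(\xhat_{k,t};x_k,\alpha_k)\leq(1-\gamma_2)\phid(\yhat_{k,t};x_k,\alpha_k)+\gamma_2\phi(x_k;x_k,\alpha_k)$ and then use $\phid(\yhat_{k,t};x_k,\alpha_k)\leq\phi(\T(x_k,\alpha_k);x_k,\alpha_k)$ together with $\gamma_2<1$ to conclude $\phi(\xhat_{k,t};x_k,\alpha_k)-\phi(\T(x_k,\alpha_k);x_k,\alpha_k)\leq\gamma_2\big(\phi(x_k;x_k,\alpha_k)-\phi(\T(x_k,\alpha_k);x_k,\alpha_k)\big)=\epsilon_k$, again giving membership in $\Tcal_{\epsilon_k}(x_k,\alpha_k)$.

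For finite termination I would show the primal--dual gap measured in Lines~\ref{line:pgd-term11}/\ref{line:pgd-term21} tends to zero. First, the inner loop (Lines~\ref{line:pgd-search-start}--\ref{line:pgd-search-end}) is projected gradient ascent with an Armijo rule on the smooth concave $\phid(\cdot;x_k,\alpha_k)$ over the compact convex $\Fcal_d$, so standard line-search theory makes each inner loop halt and forces $\phid(\yhat_{k,t};x_k,\alpha_k)\uparrow\phi(\T(x_k,\alpha_k);x_k,\alpha_k)$ (every limit point of $\{\yhat_{k,t}\}$ is stationary, hence by concavity a dual maximizer). Next, from concavity of $\phid$ and first-order optimality of any $\yhat^*_k\in\Ycalhat(x_k,\alpha_k)$ I would derive the quadratic bound $\phi(\T(x_k,\alpha_k);x_k,\alpha_k)-\phid(\yhat;x_k,\alpha_k)\geq\tfrac{\alpha_k}{2}\|A\yhat-A\yhat^*_k\|^2$, which with $A\yhat^*_k=(x^*_k-u_k)/\alpha_k$ (Lemma~\ref{lemma:link}(i)) and the dual-value convergence forces $x_{k,t}=u_k+\alpha_k A\yhat_{k,t}\to x^*_k$. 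Finally, the zeroing step must not spoil this: by Lemma~\ref{leamma:feasbound}, $\dist(\yhat_{k,t},\Ycalhat(x_k,\alpha_k))\to 0$, so any index $i$ placed in $\Pcal_{k,t}$ infinitely often (so $\|[\yhat_{k,t}]_{\Mcal(i)}\|<[\lambda]_i-\epsilon_{k-1}^\iota$) admits a nearby dual solution with $\|[\yhat^*_k]_{\Mcal(i)}\|<[\lambda]_i$, whence $[x^*_k]_{g_i}=0$ by Lemma~\ref{lemma:link}(iii); thus every zeroed group agrees asymptotically with $x^*_k$, giving $\xhat_{k,t}\to x^*_k$ and, by continuity, $\phi(\xhat_{k,t};x_k,\alpha_k)\to\phi(\T(x_k,\alpha_k);x_k,\alpha_k)$. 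The gap therefore vanishes, while — since $x_k$ is not first-order stationary, so $\T(x_k,\alpha_k)\neq x_k$, $\|\T(x_k,\alpha_k)-x_k\|>0$, and $\phi(x_k;x_k,\alpha_k)>\phi(\T(x_k,\alpha_k);x_k,\alpha_k)$ — the right-hand side of each test converges to the strictly positive limit $c_k\|x^*_k-x_k\|^2$ (Option~I) or $\gamma_2\big(\phi(x_k;x_k,\alpha_k)-\phi(\T(x_k,\alpha_k);x_k,\alpha_k)\big)$ (Option~II), so the test must eventually hold.

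The genuinely routine pieces are the two rearrangements in the correctness step and the derivation of the quadratic growth bound. The main obstacle is controlling the projected primal iterate $\xhat_{k,t}$: unlike $x_{k,t}$, it is defined through the \emph{discrete} prediction set $\Pcal_{k,t}$, so I must rule out a group with $[x^*_k]_{g_i}\neq 0$ being zeroed out arbitrarily late, which would keep $\phi(\xhat_{k,t};x_k,\alpha_k)$ — and hence the gap — bounded away from zero. This is precisely where the cushion $\epsilon_{k-1}^\iota>0$ in Line~\ref{line:pset} and the error bound of Lemma~\ref{leamma:feasbound} are indispensable: together they ensure that indices entering $\Pcal_{k,t}$ correspond in the limit to strictly inactive dual constraints, and thus via Lemma~\ref{lemma:link}(iii) to groups that are zero at $x^*_k$.
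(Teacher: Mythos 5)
Your proposal is correct and follows essentially the same route as the paper's proof: weak duality plus rearrangement for correctness upon termination, and, for finite termination, dual-value convergence of the projected gradient-ascent inner loop, the error bound of Lemma~\ref{leamma:feasbound}, the cushion $\epsilon_{k-1}^\iota$ combined with Lemma~\ref{lemma:link}(iii) to show that zeroed groups agree with $x_k^*$, and non-stationarity of $x_k$ to keep the right-hand sides of the termination tests bounded away from zero. The only difference is organizational: the paper argues by contradiction (assuming non-termination, then extracting a convergent subsequence of $\{\xhat_{k,t}\}$ via a second contradiction), whereas your per-group dichotomy (each $i$ enters $\Pcal_{k,t}$ either finitely or infinitely often) yields full-sequence convergence $\xhat_{k,t}\to x_k^*$ directly --- a slightly cleaner variant of the same argument.
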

\begin{proof}
For a proof by contradiction, assume that Algorithm~\ref{alg:dual-gradient-descent} never terminates, and thus it generates an infinite subsequence of iterates $\{\yhat_{k,t}\}_{t\geq 0}$ that is equivalent to the sequence generated by the projected gradient-ascent with arc search algorithm by~\cite[Section 2.3]{bertsekas1999nonlinear}. It is well known (e.g., see~\cite{iusem2003convergence}) that such an algorithm when applied to a concave optimization problem will satisfy 
$\lim_{t\to\infty} \phid(\yhat_{k,t};x_k,\alpha_k) = \phid(\yhat^*_k;x_k,\alpha_k)$, for any $\yhat^*_k$ that is an optimal solution to the dual problem~\eqref{prob:dual}. Combining this with the linking equation (see Lemma~\ref{lemma:link}(iii)) and Line~\ref{line:proj-primal} of Algorithm~\ref{alg:dual-gradient-descent}, it follows that $\lim_{t\to\infty} \phi(x_{k,t};x_k,\alpha_k) =  \phi(x_k^*;x_k,\alpha_k)$ where $\xhat^*_k$ is the optimal solution to the primal problem~\eqref{prob:primal-again}.  Therefore, since strong duality holds for our primal-dual problems (Slater's condition holds for problem~\eqref{prob:dual}), we can conclude that
\begin{equation}\label{term:lhs}
\lim_{t\to\infty} \big(\phi(x_{k,t};x_k,\alpha_k)-\phid(\yhat_{k,t};x_k,\alpha_k) \big) = 0.
\end{equation}

Next, we claim that there exists a subsequence $\Tcal$ such that $\lim_{t\in\Tcal} \xhat_{k,t} = x_k^*$.  For a proof by contradiction, suppose that no such subsequence $\Tcal$ exists. Combining this supposition with $\lim_{t\to\infty} x_{k,t} = x_k^*$ (this follows from~\eqref{term:lhs} and the fact that the primal proximal subproblem has a unique solution), we can conclude that there exists an $i_*\in[\ngrp]$, $t_*\in\N{}$, 
 and constants $\epsilon$ and $\Gcal$ such that
\begin{equation}\label{G-bound}
\|[\xhat_{k,t} - x_{k,t}]_{g_{i_*}}\| 
\geq \epsilon >  0 
\ \ 
\text{and} \ \
\|[x_{k,t} - x_k^*]_{g_{i_*}}\| < \epsilon.
\ \ \text{for all $t\geq t_*$.}
\end{equation}
The first inequality in~\eqref{G-bound} together with the definition of $\xhat_{k,t}$ in Line~\ref{line:proj-primal} shows that $i_* \in\Pcal_{k,t}$ for all $t\geq t_*$.  Using this fact, the definition of $\Pcal_{k,t}$, and~\eqref{G-bound} gives
\begin{equation}\label{in-Pcal}
i_*\in\Pcal_{k,t}
\ \ \text{and} \ \
\|[\yhat_{k,t}]_{\Mcal(i_*)}\| < [\lambda]_{i_*} - \epsilon_{k-1}^\iota
\ \ \text{for all $t\geq t_*$.}
\end{equation}
Using $\lim_{t\to\infty} \dist\big(\yhat_{k,t},\Ycalhat(x_k,\alpha_k)\big) = 0$, $\epsilon_{k-1} > 0$, and~\eqref{in-Pcal}, we know that there exists a $y_k^*\in\Ycalhat(x_k,\alpha_k)$ satisfying $\|[y_k^*]_{\Mcal(i_*)}\| < [\lambda]_{i_*}$. This fact and Lemma~\ref{lemma:link}(iii) it follows that $[x_k^*]_{g_{i_*}} = 0$.  Using this equality with the triangle inequality, $i_*\in\Pcal_{k,t}$ for all $t\geq t_*$ (see~\eqref{in-Pcal}) meaning that $[\xhat_{k,t}]_{g_{i_*}} = [x_k^*]_{g_{i_*}}$, and the second inequality in~\eqref{G-bound} yields
$$
\|[\xhat_{k,t} - x_{k,t}]_{g_{i_*}}\|
\leq \|[\xhat_{k,t} - x_k^*]_{g_{i_*}} \| + \|[x_k^* - x_{k,t}]_{g_{i_*}}\|
= \|[x_k^* - x_{k,t}]_{g_{i_*}}\|
< \epsilon
\ \ \text{for all $t\geq t_*$,}
$$
which contradicts the first inequality in~\eqref{G-bound}.  Therefore, our claim must be true, namely that there exists a subsequence $\Tcal$ such that $\lim_{t\in\Tcal} \xhat_{k,t} = x_k^*$. 

Using $\lim_{t\in\Tcal} \xhat_{k,t} = x_k^*$, we find that
\begin{equation}\label{term:lhs-old}
\lim_{t\in\Tcal} \big(\phi(\xhat_{k,t};x_k,\alpha_k)-\phid(\yhat_{k,t+1};x_k,\alpha_k) \big) = 0.
\end{equation}
Moreover, since by assumption $x_k$ does not satisfy the first-order optimality conditions for problem~\eqref{prob.main}, we know that there exists a constant $\delta > 0$ such that
$
\min\{c_k\|\xhat_{k,t}-x_k\|^2, \gamma_2(\phi(x_k;x_k,\alpha_k) - \phid(\yhat_{k,t+1};x_k,\alpha_k))\} \geq \delta > 0
\ \ \text{for all sufficiently large $t \in\Tcal$.}
$
This fact may be combined with~\eqref{term:lhs-old} to conclude that the conditions in Lines~\ref{line:pgd-term11} and~\ref{line:pgd-term21} will both hold for all sufficiently large $t\in\Tcal$, therefore proving that Algorithm~\ref{alg:dual-gradient-descent} will finitely terminate (regardless of the value of $\opt{}$), which contradicts our original supposition, and thus Algorithm~\ref{alg:dual-gradient-descent} finitely terminates.

Next, suppose that $\opt{}=\optone{}$.  In this case, Algorithm~\ref{alg:dual-gradient-descent} will finitely terminate in Line~\ref{line:pgd-term11} and return the vector $\xhat_{k+1}$ satisfying 
$$
\phi(\xhat_{k+1};x_k,\alpha_k) - \phi(T(x_k, \alpha_k);x_k,\alpha_k) \leq \phi(\xhat_{k+1};x_k,\alpha_k) - \phid(\yhat_{k+1};x_k,\alpha_k) \leq c_k\norm{\xhat_{k+1}-x_k}^2.
$$
This inequality and~\eqref{def:ipg-update} show that $\xhat_{k+1}\in \Tcal_{\epsilon_k}(x_k, \alpha_k)$ with $\epsilon_k=c_k\norm{\xhat_{k+1}-x_k}^2$ as defined in Line~\ref{line:triple1} of Algorithm~\ref{alg:main}, which completes the proof for this case. 

Finally, suppose that $\opt{}=\opttwo{}$.  In this case, Algorithm~\ref{alg:dual-gradient-descent} will finitely terminate in Line~\ref{line:pgd-term21} and return the vector $\xhat_{k+1}$ satisfying
$$
\phi(\xhat_{k+1};x_k,\alpha_k) - \phid(\yhat_{k+1};x_k,\alpha_k) \leq \gamma_2(\phi(x_k;x_k,\alpha_k) - \phid(\yhat_{k+1};x_k,\alpha_k)).
$$
Rearranging terms and using $\gamma_2\in(0,1/2]$ and weak duality gives
$$
\phi(\xhat_{k+1};x_k,\alpha_k) - \gamma_2 \phi(x_k;x_k,\alpha_k)
\leq (1-\gamma_2)\phid(\yhat_{k+1};x_k,\alpha_k)
\leq (1-\gamma_2) \phi(\T(x_k,\alpha_k);x_k,\alpha_k).
$$
Thus, $\phi(\xhat_{k+1};x_k,\alpha_k) - \phi(\T(x_k,\alpha_k);x_k,\alpha_k) 
\leq  \gamma_2( \phi(x_k;x_k,\alpha_k) - \phi(\T(x_k,\alpha_k);x_k,\alpha_k))$, so that  $\xhat_{k+1}\in \Tcal_{\epsilon_k}(x_k, \alpha_k)$ with $\epsilon_{k}=\gamma_2( \phi(x_k;x_k,\alpha_k) -  \phi(\T(x_k,\alpha_k);x_k,\alpha_k)$ as defined in Line~\ref{line:triple2} of Algorithm~\ref{alg:main}, which completes the proof for this case.
\end{proof}

\subsection{Support Identification}

In this section, we prove that Algorithm~\ref{alg:main}, when  paired with Algorithm~\ref{alg:dual-gradient-descent} as its subproblem solver, has a finite support identification property.  This result is proved under the following two assumptions. 

\begin{assumption}\label{ass:active-set}
The function $f$ is strongly convex with strong convexity parameter $\mu_f > 0$. Thus, problem~\eqref{prob.main} with regularizer defined by~\eqref{eq:natOverlap} has a unique minimizer $x^*$. 
\end{assumption}

\begin{assumption}\label{ass:alpha}
The parameter $\gamma_1\in(0,2)$ from Algorithm~\ref{alg:main} is  chosen to satisfy $3\gamma_1/(1+\gamma_1) \leq 1$, and the initial PG parameter is then chosen to satisfy 
$$
\alpha_0 
\in
\begin{cases}
\left(0,\tfrac{3\gamma_1(1-\eta)}{L_g(1+\gamma_1)}\right] & \text{if $\opt{} = \optone{}$,} \\[0.5em]
\left(0,\tfrac{1-\eta}{L_g}\right] & \text{if $\opt{} = \opttwo{}$}.
\end{cases}
$$
\end{assumption}

Assumption~\ref{ass:alpha} is made to simplify the presentation, since it ensures that the sequence $\{\alpha_k\}$ is constant and smaller than $1/L_g$ (regardless of the value of $\opt{}$) as shown in Lemma~\ref{lem-epsk-to-zero}. Otherwise, the analysis below holds for all iterations beyond some fixed iteration, as a consequence of Lemma~\ref{lem:alpha-fixed}.

The following limits hold as a consequence of Assumption~\ref{ass:active-set} and Assumption~\ref{ass:alpha}.

\begin{lemma}\label{lem-epsk-to-zero}
Under Assumption~\ref{ass:active-set} and Assumption~\ref{ass:alpha}, with $\alpha^* := \alpha_0$, it holds that
\begin{align*}
\lim_{k\to\infty} \epsk &= 0, \ \ 
\lim_{k\to\infty} \|s_k\| = 0, \ \ 
\lim_{k\to\infty} x_k = x^*, \ \
\lim_{k\to\infty} x^*_k = x^*, \ \
\text{and} \\ 
\alpha_k &= \alpha^* = \alpha_0\in(0,(1-\eta)/L_g] \  \text{for all $k\in\N{}$.}
\end{align*}
\end{lemma}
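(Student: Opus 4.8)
The plan is to prove the five conclusions in the order (i) $\alpha_k$ is constant, (ii) $\chipgk\to 0$, (iii) $x_k\to x^*$ and $x^*_k\to x^*$, and (iv) $\epsk\to 0$ and $\norm{s_k}\to 0$. I would begin with the step-size claim, since everything else rests on it. Because Assumption~\ref{ass:alpha} requires $3\gamma_1/(1+\gamma_1)\le 1$, the stated upper bound on $\alpha_0$ satisfies $\alpha_0\le(1-\eta)/L_g$ in both cases and, moreover, $\alpha_0$ is at most the threshold appearing in Lemma~\ref{lem:alpha-fixed}, so that $\alpha_{\min}=\alpha_0$. Since $\{\alpha_k\}$ is nonincreasing (each iteration either leaves it unchanged or multiplies it by $\zeta\in(0,1)$) and is bounded below by $\alpha_{\min}=\alpha_0$ by Lemma~\ref{lem:alpha-fixed}, it must be constant: $\alpha_k=\alpha^*:=\alpha_0\in(0,(1-\eta)/L_g]$ for all $k$. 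Consequently no backtracking ever reduces the parameter, so $\setpgDEC=\emptyset$ and every iteration lies in $\setpgSAME$.

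With every iteration in $\setpgSAME$, Lemma~\ref{lem:K-pg}(i) gives $f(x_{k+1})+r(x_{k+1})\le f(x_k)+r(x_k)-\eta\beta\alpha^*\chipgk^2$; telescoping and using that $f+r$ is bounded below (Assumption~\ref{ass.first}) yields $\sum_k\chipgk^2<\infty$, so $\chipgk\to 0$. Recalling that $\chipgk=\norm{\T(x_k,\alpha^*)-x_k}/\alpha^*=\norm{x^*_k-x_k}/\alpha^*$, we obtain $\norm{x^*_k-x_k}\to 0$. To upgrade this to convergence, note that $f$ strongly convex and $r$ convex make $f+r$ strongly convex with compact sublevel set $\Lcal$; since $f+r$ decreases monotonically along $\{x_k\}$ (Lemma~\ref{lem:K-pg}), the iterates remain in $\Lcal$ and are bounded. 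The map $x\mapsto\T(x,\alpha^*)=\prox{\alpha^* r}{x-\alpha^*\grad f(x)}$ is continuous (the proximal operator is nonexpansive and $\grad f$ is continuous), so along any convergent subsequence $x_{k_j}\to x_\infty$ the relation $\norm{\T(x_k,\alpha^*)-x_k}\to 0$ passes to the limit to give $\T(x_\infty,\alpha^*)=x_\infty$, which is exactly the first-order stationarity condition for the convex problem~\eqref{prob.main}. By Assumption~\ref{ass:active-set} the minimizer is unique, so $x_\infty=x^*$; as every convergent subsequence shares this limit and $\{x_k\}$ is bounded, $x_k\to x^*$, and then $x^*_k\to x^*$ because $\norm{x^*_k-x_k}\to 0$.

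It remains to treat $\epsk$ and $\norm{s_k}$. Writing $d_k:=\norm{x^*_k-x_k}=\alpha^*\chipgk\to 0$ and invoking Lemma~\ref{lemma:chibound} ($\norm{x^*_k-\xhat_{k+1}}\le\sqrt{2\alpha^*\epsk}$), we always have $\norm{s_k}\le\sqrt{2\alpha^*\epsk}+d_k$. For $\opt=\opttwo{}$ I would bound the primal gap $\phi(x_k;x_k,\alpha^*)-\phi(x^*_k;x_k,\alpha^*)$ by expanding the (quadratic) smooth part of $\phi$ exactly about $x^*_k$, using $-\tfrac{1}{\alpha^*}(x^*_k-u_k)\in\partial r(x^*_k)$ from optimality of $x^*_k$ to bound $\norm{\nabla q(x^*_k)}$ by the Lipschitz constant $L_r$ of the group-$\ell_1$ regularizer~\eqref{eq:natOverlap}, and the global Lipschitz continuity of that same regularizer to bound $r(x_k)-r(x^*_k)\le L_r d_k$; this yields $\phi(x_k)-\phi(x^*_k)\le 2L_r d_k+\tfrac{1}{2\alpha^*}d_k^2\to 0$, hence $\epsk\to 0$ and then $\norm{s_k}\to 0$. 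For $\opt=\optone{}$ the relation $\epsk=c_k\norm{s_k}^2$ is circular, so I would substitute it into $\norm{s_k}\le\sqrt{2\alpha^* c_k}\,\norm{s_k}+d_k$ and solve; the range for $c_k$ in Line~\ref{line:ck} guarantees $2\alpha^* c_k\le\tfrac12\big(\sqrt{6/(1+\gamma_1)}-\sqrt2\big)^2<1$ uniformly in $k$, so $\norm{s_k}\le d_k/(1-\sqrt{2\alpha^* c_k})\to 0$ and therefore $\epsk=c_k\norm{s_k}^2\to 0$.

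The \emph{main obstacle} lies in this last step. For $\opt=\optone{}$ the self-referential inequality can be solved only because the prescribed interval for $c_k$ makes $2\alpha^* c_k$ strictly and uniformly below one, so the sketched estimate must be accompanied by an explicit verification of that uniform bound. For $\opt=\opttwo{}$ the difficulty is controlling the primal gap by $O(d_k)$, which is not automatic from continuity of $\phi$ (its dependence on $k$ enters through $u_k$) and instead hinges on the Lipschitz continuity of the regularizer~\eqref{eq:natOverlap}; this is precisely what forces $\epsk$ to zero at the same rate as $\norm{x^*_k-x_k}$.
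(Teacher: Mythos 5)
Your proposal reaches all five conclusions and is mostly sound, but the very first step---the one you correctly observe everything else rests on---contains an error. You assert that Assumption~\ref{ass:alpha} puts $\alpha_0$ at or below ``the threshold appearing in Lemma~\ref{lem:alpha-fixed}, so that $\alpha_{\min}=\alpha_0$.'' That is not what \eqref{def:alphamin} says: the threshold inside $\alpha_{\min}$ carries a factor of $\zeta$, i.e., $\alpha_{\min}=\min\{\alpha_0,\zeta T\}$ with $T=\tfrac{3\gamma_1(1-\eta)}{L_g(1+\gamma_1)}$ when $\opt{}=\optone{}$ (and $T=\tfrac{1-\eta}{L_g}$ when $\opt{}=\opttwo{}$), while Assumption~\ref{ass:alpha} only guarantees $\alpha_0\le T$. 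Whenever $\zeta T<\alpha_0\le T$ one has $\alpha_{\min}=\zeta T<\alpha_0$, so ``nonincreasing and bounded below by $\alpha_0$'' is simply not available. Your argument can be patched when $\alpha_0<T$ strictly: a first decrease would produce $\alpha_{k+1}=\zeta\alpha_0<\min\{\alpha_0,\zeta T\}=\alpha_{\min}$, contradicting the statement of Lemma~\ref{lem:alpha-fixed}. But in the boundary case $\alpha_0=T$---which Assumption~\ref{ass:alpha} permits---the statement alone cannot rule out a single decrease to $\zeta T=\alpha_{\min}$, so constancy at $\alpha_0$ does not follow from it. The clean fix is exactly what the paper does: invoke the \emph{proof}, not the statement, of Lemma~\ref{lem:alpha-fixed}, whose key intermediate fact is that $\alpha_k\le T$ forces acceptance of the unit step and hence $\alpha_{k+1}=\alpha_k$; since $\alpha_0\le T$, induction gives $\alpha_k=\alpha_0$ for all $k$, and then $\setpgDEC=\emptyset$ as you claim.

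Past that point your argument is correct but takes a genuinely different route from the paper's for the limits $\epsk\to0$ and $\norm{s_k}\to0$. The paper observes that the same telescoped sufficient-decrease inequality that yields $\chipgk\to0$ also yields $\Delta_k\to0$, and then reads off both limits at once from the already-proved bounds \eqref{Deltak-bd} and \eqref{Deltak-bd-2}, which give $-\Delta_k\ge\gamma_1\epsk+\tfrac{\gamma_1}{2\alpha_k}\norm{s_k}^2$ and $-\Delta_k\ge\epsk+\tfrac{1}{2\alpha_k}\norm{s_k}^2$; this treats both options uniformly and uses nothing about $r$ beyond convexity. You instead start from $d_k:=\norm{x^*_k-x_k}=\alpha^*\chipgk\to0$ and Lemma~\ref{lemma:chibound}, and handle the options separately: for $\opt{}=\opttwo{}$, a bound on the primal gap via bounded subgradients of $r$, which is legitimate here because the overlapping \grplone{} regularizer \eqref{eq:natOverlap} is globally Lipschitz, though it ties the argument to this specific regularizer; and for $\opt{}=\optone{}$, the self-referential inequality $\norm{s_k}\le\sqrt{2\alpha^*c_k}\,\norm{s_k}+d_k$, resolved by the uniform bound $2\alpha_kc_k\le\tfrac12\big(\sqrt{6/(1+\gamma_1)}-\sqrt2\big)^2<4-2\sqrt3<1$, which I checked and is correct. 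Your subsequence argument for $x_k\to x^*$ (compact level sets plus continuity of $\T(\cdot,\alpha^*)$) is a more self-contained substitute for the paper's citation of~\cite[Theorem~10.7]{beck2017first}. In short: repair the first step as above, and the remainder of your proof stands as a valid, if less uniform, alternative to the paper's.
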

\begin{proof}
The fact that $\alpha_k = \alpha^* = \alpha_0$ for all $k\in\N{}$ follows from Assumption~\ref{ass:alpha}, the definition of $\alpha^*$, and the proof of Lemma~\ref{lem:alpha-fixed}. The inclusion $\alpha_0 \in (0,(1-\eta)/L_g]$ follows from Assumption~\ref{ass:alpha}.  Next, 
Lemma~\ref{lem:K-pg}(i) and Lemma~\ref{lem:alpha-fixed} imply that $f(x_{k+1}) + r(x_{k+1})
\leq f(x_k) + r(x_k) + \eta \Delta_k
\leq f(x_k) + r(x_k) - \eta\beta\alpha^* \chipgk^2$ for all $k\in\N{}$.  These inequalities and the fact that $\{f(x_k)+r(x_k)\}$ is bounded below because of Assumption~\ref{ass:active-set}, show that $\{\chipgk\}\to 0$ and $\{\Delta_k\} \to 0$.  Using these limits and Lemma~\ref{lem:alpha-fixed}, it follows from~\eqref{Deltak-bd} when $\opt{}=\optone{}$ and from~\eqref{Deltak-bd-2} when $\opt{}=\opttwo{}$ that 0 = $\lim_{k\to\infty} \epsk = \lim_{k\to\infty} \|s_k\|$, which are the first two limits we aimed to prove. Next, it follows from $\{\chipgk\} \to 0$, the fact that $\chipgk$ is a first-order optimality measure for~\eqref{prob.main} (see~\cite[Theorem~10.7]{beck2017first}), and Assumption~\ref{ass:active-set} that $\lim_{k\to\infty} x_k = x^*$. 
Next, combining $\{\epsk\} \to 0$,  Lemma~\ref{lemma:chibound}, and Lemma~\ref{lem:alpha-fixed} proves that $\{\|x^*_k-x_k\|\} \to 0$.  Combining this limit with $\{x_k\}\to x^*$ and $\|x^*_k - x^*\| \leq \|x^*_k-x_k\| + \|x_k-x^*\|$ gives $\lim_{k\to\infty} x^*_k = x^*$. 
\end{proof}

We now show that the dual solutions to problem~\eqref{prob.main} are given by $\Ycalhat(x^*,\alpha^*)$.

\begin{lemma}\label{lem:dual-at-star}
Under Assumption~\ref{ass:active-set}, the solution set to the dual of problem~\eqref{prob.main} is 
\begin{align}\label{eq:prob.main.dual}
    \Ycalmain := \max_{\yhat\in \Fcal_d} \ \phidmain(\yhat) 
    \ \ \text{with} \ \ 
    \phidmain(\yhat) := \inf_{x\in\R{n}} \big( f(x)-x^TA\yhat \big).
\end{align}
Also, if Assumption~\ref{ass:alpha} holds, then the dual solutions satisfy $\Ycalmain \equiv \Ycalhat(x^*,\alpha^*)$. 
\end{lemma}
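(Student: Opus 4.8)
The plan is to mirror the conic-duality derivation already carried out for the PG subproblem in Section~\ref{sec.identification}, and then to identify the two solution sets by exploiting the fact that $x^*$ is a fixed point of the PG operator $\T(\cdot,\alpha^*)$.

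For the first claim, I would repeat the reformulation that led from~\eqref{prob:primal-again} to~\eqref{prob:dual}, with the sole change of replacing the quadratic term $\tfrac{1}{2\alpha_k}\|x-u_k\|^2$ by $f(x)$. Introducing the auxiliary variables $z$, $\{p_i\}$, $q$ and the second-order cones $\Kcal_i$, problem~\eqref{prob.main} becomes a conic program whose Lagrangian is identical to~\eqref{lagrangian} except for the leading smooth term. Taking the infimum over the primal variables, the constraints on the dual multipliers are unchanged (so one still obtains $\pi=\lambda$ together with the second-order cone memberships defining $\yhat\in\Fcal_d$), while the smooth block now contributes $\inf_{x}\big(f(x)-\sum_i y_i^T[x]_{g_i}\big)=\inf_x\big(f(x)-x^TA\yhat\big)=\phidmain(\yhat)$ after using the identification $\sum_i\ybar_i=A\yhat$ from~\eqref{ident}. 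This yields exactly the dual problem~\eqref{eq:prob.main.dual}, and strong duality (hence the fact that $\Ycalmain$ is the dual solution set) follows from Slater's condition for the dual exactly as for~\eqref{prob:dual}, since the weights $\lambda$ are strictly positive.

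For the second claim I would characterize both sets explicitly and show they coincide with $\{\yhat\in\Fcal_d~|~A\yhat=\nabla f(x^*)\}$. On the one hand, since $x^*$ minimizes $f+r$, the first-order condition $0\in\nabla f(x^*)+\partial r(x^*)$ is equivalent to $x^*=\prox{\alpha^* r}{x^*-\alpha^*\nabla f(x^*)}=\T(x^*,\alpha^*)$, so $x^*$ is the primal solution of the PG subproblem at $(x^*,\alpha^*)$. Substituting $x^*_k=x^*$ and $u_k=x^*-\alpha^*\nabla f(x^*)$ into Lemma~\ref{lemma:link}(i) gives $(x^*_k-u_k)/\alpha^*=\nabla f(x^*)$, hence $\Ycalhat(x^*,\alpha^*)=\{\yhat\in\Fcal_d~|~A\yhat=\nabla f(x^*)\}$. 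On the other hand, writing $\phidmain(\yhat)=-\sup_x\big(x^TA\yhat-f(x)\big)$ and using that $f$ is strongly convex (Assumption~\ref{ass:active-set}), the inner infimum in $\phidmain$ is finite and attained at the unique $x$ with $\nabla f(x)=A\yhat$; for $\yhat\in\Fcal_d$ with $A\yhat=\nabla f(x^*)$ this forces $x=x^*$ and gives the constant value $f(x^*)-(x^*)^T\nabla f(x^*)$. Combining the KKT stationarity relation $\nabla f(x^*)=A\yhat^*$, valid for any dual optimal $\yhat^*$, with the existence of such a $\yhat^*$ (strong duality) shows that this constant is the optimal dual value and that every feasible $\yhat$ with $A\yhat=\nabla f(x^*)$ is a maximizer, so $\Ycalmain=\{\yhat\in\Fcal_d~|~A\yhat=\nabla f(x^*)\}$ as well; the equivalence $\Ycalmain\equiv\Ycalhat(x^*,\alpha^*)$ follows.

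The routine but tedious part is re-running the conic duality derivation; since only the smooth term changes and every cone and feasibility computation is identical to the PG-subproblem case, this can be stated by reference rather than repeated in full. The more delicate point, and the one I expect to require the most care, is the explicit characterization of $\Ycalmain$: one must justify that the infimum defining $\phidmain$ is finite and attained (which is exactly where strong convexity of $f$ enters) and that strong duality furnishes a dual optimal solution lying in the set $\{A\yhat=\nabla f(x^*)\}$, so that the constant value computed there is genuinely the maximum. Assumption~\ref{ass:alpha} enters only through Lemma~\ref{lem-epsk-to-zero}, guaranteeing $\alpha_k\equiv\alpha^*$, which is what lets us evaluate $\Ycalhat$ at the single pair $(x^*,\alpha^*)$.
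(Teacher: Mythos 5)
Your proof is correct, and for the first claim it matches the paper exactly (both obtain \eqref{eq:prob.main.dual} by rerunning the conic-duality computation with $f$ in place of the quadratic and citing Slater for strong duality). For the identity $\Ycalmain\equiv\Ycalhat(x^*,\alpha^*)$, however, you take a genuinely different route. The paper builds a bridge: it introduces the shifted dual function $\phidprox(\yhat)=-\tfrac{\alpha^*}{2}\|A\yhat\|^2-(u^*)^TA\yhat+f(x^*)-\tfrac{\alpha^*}{2}\|\grad f(x^*)\|^2$, shows every $\pi\in\Ycalhat(x^*,\alpha^*)$ satisfies $A\pi=\grad f(x^*)$ and $\phidprox(\pi)=f(x^*)-(x^*)^T\grad f(x^*)$, and then proves the majorization $\phidprox(\yhat)\geq\phidmain(\yhat)$ for all $\yhat\in\Fcal_d$ --- an inequality that genuinely uses $\alpha^*\in(0,(1-\eta)/L_g]$, i.e., Assumption~\ref{ass:alpha} through Lemma~\ref{lem-epsk-to-zero} --- before comparing optimal values to get both inclusions. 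You instead identify both sets with $\{\yhat\in\Fcal_d~|~A\yhat=\grad f(x^*)\}$: on the subproblem side via the fixed-point identity $\T(x^*,\alpha^*)=x^*$ substituted into Lemma~\ref{lemma:link}(i), and on the $\Ycalmain$ side via strong convexity (so the infimum defining $\phidmain$ is attained at the unique $x$ with $\grad f(x)=A\yhat$, giving the constant value $f(x^*)-(x^*)^T\grad f(x^*)$ whenever $A\yhat=\grad f(x^*)$) combined with dual attainment and the stationarity relation $A\yhat^*=\grad f(x^*)$ --- the same relation the paper itself invokes, by the same appeal to primal-dual optimality, in its proof of $\Ycalmain\subseteq\Ycalhat(x^*,\alpha^*)$. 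Your argument is shorter, treats the two inclusions symmetrically, avoids the majorization inequality entirely, and in fact establishes the identity for \emph{every} $\alpha^*>0$, with Assumption~\ref{ass:alpha} needed only to force $\alpha_k\equiv\alpha^*=\alpha_0$, exactly as you note. What the paper's bridge buys in exchange is that its inclusion $\Ycalhat(x^*,\alpha^*)\subseteq\Ycalmain$ manufactures maximizers of $\phidmain$ directly from the nonempty prox-dual solution set, so it never needs attainment of the dual of \eqref{prob.main}; your argument does lean on that attainment, so the one point to state carefully is that Slater should be checked for the primal conic reformulation of \eqref{prob.main} (strict feasibility of the second-order cones), not, as you wrote, ``for the dual'' --- here both strict feasibility conditions hold, so nothing breaks, but the distinction matters since Slater on a given side yields attainment on the \emph{opposite} side.
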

\begin{proof}
Following a similar derivation as that which led to the dual~\eqref{prob:dual} for the PG subproblem~\eqref{prob:primal-again}, one can see that the dual problem for~\eqref{prob.main} can be written as~\eqref{eq:prob.main.dual}.

Before proving $\Ycalmain \equiv \Ycalhat(x^*,\alpha^*)$, we consider a shifted version of the dual~\eqref{prob:dual} with $(x_k,\alpha_k) = (x^*,\alpha^*)$ and $u^* = x^* - \alpha^* \nabla f(x^*)$ where $\alpha^*$ is defined in Lemma~\ref{lem-epsk-to-zero}:
\begin{align}\label{eq:prob.prox.dual.shifted}
    \max_{\yhat\in\Fcal_d} \ \phidprox(\yhat)
\end{align}
where
\begin{align*}
\phidprox(\yhat)
&:= -\tfrac{\alpha^*}{2}\|A\yhat\|^2 - (u^*)^TA\yhat + f(x^*) - \tfrac{\alpha^*}{2}\|\nabla f(x^*)\|^2  \\
&\phantom{:}= \inf_{x\in\R{n}} 
    \big( \tfrac{1}{2\alpha^*}\|x-u^*\|^2 - x^TA\yhat + f(x^*) - \tfrac{\alpha^*}{2}\norm{\nabla f(x^*)}^2  \big).
\end{align*}
This shifted dual problem serves as a bridge between  problems~\eqref{eq:prob.main.dual} and~\eqref{prob:dual}, and has two important properties. First,
$\Ycalhat(x^*,\alpha^*)$ is the solution set of problem~\eqref{eq:prob.prox.dual.shifted} since it is also the solution set to problem~\eqref{prob:dual} (see Lemma~\ref{lemma:link}(i)). Second, the following hold:
\begin{equation}\label{phi-eq-shift}
\phidprox(\pi) =  f(x^*) - (x^*)^T\nabla f(x^*)
\ \ \text{and} \ \ 
A\pi = \nabla f(x^*)
\ \ \text{for all $\pi\in\Ycalhat(x^*,\alpha^*)$,}
\end{equation}
which can be seen to hold from the following argument. Let $\pi\in\Ycalhat(x^*,\alpha^*)$. Since $x^*$ is a solution to problem~\eqref{prob.main}, it is easy to show that $x^*$ is also a solution to problem~\eqref{prob:primal-again}.  Therefore, $(x^*, \pi)$ is a primal-dual solution pair for the proximal problem and its dual with $(x_k,\alpha_k) = (x^*,\alpha^*)$.  We can now apply Lemma~\ref{lemma:link}(ii) to conclude that $x^*= u^* + \alpha^* A \pi = x^* - \alpha^*\nabla f(x^*) + \alpha^* A \pi$, which implies the second equality in~\eqref{phi-eq-shift}.  It now follows from the second equality in~\eqref{phi-eq-shift} and
the definition of $u^*$ that 
$\phidprox(\pi)
= -\tfrac{\alpha^*}{2}\|A\pi\|^2 - (u^*)^TA\pi + f(x^*) - \tfrac{\alpha^*}{2}\|\nabla f(x^*)\|^2 
= -\alpha^*\|\nabla f(x^*)\|^2 - (x^*-\alpha^*\nabla f(x^*))^T\nabla f(x^*) + f(x^*) 
= f(x^*) - (x^*)^T\nabla f(x^*)$, which establishes the first equality in~\eqref{phi-eq-shift}.

We first prove that $\Ycalhat(x^*,\alpha^*) \subseteq \Ycalmain$.  To that end, suppose that $\yhat^*\in\Ycalhat(x^*,\alpha^*)$ and define (see the function inside the infimum in the definition of $\phidprox$) the vector
$$
\xprox := \argmin{x\in\R{n}} \ \tfrac{1}{2\alpha^*}\|x-u^*\|^2 
- x^TA\yhat^*
+ f(x^*) - \tfrac{\alpha^*}{2}\norm{\nabla f(x^*)}^2.
$$
First-order optimality conditions for this problem show that $\frac{1}{\alpha^*}(\xprox - x^*) + \grad f(x^*) = A\yhat^*,
$
which combined with $A\yhat^* = \nabla f(x^*)$ (see~\eqref{phi-eq-shift}) proves that $\xprox=x^*$. It is also straightforward to verify using  first-order optimality conditions and $A\yhat^* = \nabla f(x^*)$ that  $x^* = \argmin{x\in\R{n}} \ \big( f(x)-x^TA\yhat^* \big)$, so that
\begin{equation}\label{phi-eq-main}
\phidmain(\yhat^*)
= f(x^*) - (x^*)^T A\yhat^*
= f(x^*) - (x^*)^T \nabla f(x^*).
\end{equation}

Next, it follows from the definition of $u^*$, Assumption~\ref{ass.first}, $\alpha^*\in (0,(1-\eta)/L_g]$ (see Lemma~\ref{lem-epsk-to-zero}), and $\eta\in(0,1)$  that
\begin{align*}
&\tfrac{1}{2\alpha^*}\|x-u^*\|^2 + f(x^*) - \tfrac{\alpha^*}{2}\norm{\nabla f(x^*)}^2  \\
&= f(x^*) + (x-x^*)^T\grad f(x^*) + \tfrac{1}{2\alpha^*}\norm{x-x^*}^2\geq f(x) \text{ for all } x\in\R{n}.
\end{align*}
Substracting $x^TA\yhat$ from both sides of the previous inequality and then taking the infimum over $x\in\R{n}$, we find that $\phidprox(\yhat) \geq \phidmain(\yhat)$ for all $\yhat\in \Fcal_d$.  Using this inequality, \eqref{phi-eq-main}, \eqref{phi-eq-shift}, and  $\yhat^*\in\Ycalhat(x^*,\alpha^*)$ (recall that we earlier commented that $\Ycalhat(x^*,\alpha^*)$ is also the solution set to problem~\eqref{eq:prob.prox.dual.shifted}) that
$$
\phidmain(\yhat^*) = \phidprox(\yhat^*) = f(x^*)-(x^*)^T\grad f(x^*) \geq \phidprox(\yhat) \geq \phidmain(\yhat) \text{ for all } \yhat\in  \Fcal_d,
$$
which shows $\yhat^*\in \Ycalmain$, and completes the proof for this inclusion.

Next,  we prove the inclusion  $\Ycalmain\subseteq\Ycalhat(x^*,\alpha^*)$. Let $\yhat^*\in\Ycalmain$. If we denote $\xmain :=  \text{argmin}_{x\in\R{n}}\ ( f(x) - x^TA\yhat^* )$, then first-order optimality conditions give
\begin{equation} \label{eq:Ay-2}
\grad f(\xmain)=A\yhat^*.
\end{equation}
Since $(x^*,\yhat^*)$ is an  optimal primal-dual solution pair for problem~\eqref{prob.main} and its dual~\eqref{eq:prob.main.dual}, it follows that $A\yhat^*=\grad f(x^*)$.  Together with~\eqref{eq:Ay-2} shows that $\grad f(x^*)=\grad f(\xmain)$, which combined with $f$ being strongly convex with parameter $\mu_f > 0$ (see Assumption~\ref{ass:active-set}) to obtain $0 = \norm{\grad f(x^*)-\grad f(\xmain)} \geq \mu\norm{x^*-\xmain}$, so that  $\xmain=x^*$. 
Using the definition of $\xmain$, $\xmain = x^*$, and $A\yhat^*=\grad f(x^*)$, we have that 
\begin{equation}\label{eq:phidmainval}
    \phidmain(\yhat^*)= f(\xmain) - (\xmain)^TA\yhat^*=f(x^*) -(x^*)^T\grad f(x^*).
\end{equation}
Next, using $A \yhat^* = \nabla f(x^*)$ and the definition of $u^*$, it follows that
\begin{equation}\label{eq:phidproxval}
\begin{aligned}
\phidprox(\yhat^*)
&= -\tfrac{\alpha^*}{2}\|A\yhat^*\|^2 - (u^*)^TA\yhat^* + f(x^*) - \tfrac{\alpha^*}{2}\|\nabla f(x^*)\|^2 \\
&= -\alpha^*\|\nabla f(x^*)\|^2 - (x^*-\alpha^*\nabla f(x^*))^T\nabla f(x^*) + f(x^*) \\
&= f(x^*) - (x^*)^T\nabla f(x^*).
\end{aligned}    
\end{equation}
Taking any $\yhat'\in \Ycalhat(x^*,\alpha^*)$, it follows from~\eqref{phi-eq-shift} that $\phidprox(\yhat')=f(x^*) -(x^*)^T\grad f(x^*)$, which together with~\eqref{eq:phidmainval} and~\eqref{eq:phidproxval} implies
$$
\phidprox(\yhat^*) = \phidmain(\yhat^*) = \phidprox(\yhat') \geq \phidprox(\yhat) \  \text{for all} \  \yhat \in \Fcal_d.
$$
Since this inequality means $\yhat^*\in\Ycalhat(x^*,\alpha^*)$, we conclude that  $\Ycalmain\subseteq\Ycalhat(x^*,\alpha^*)$.
\end{proof}

Since we established in Lemma~\ref{lem:dual-at-star} that $\Ycalhat(x^*,\alpha^*)$ is the set of dual solutions to problem~\eqref{prob.main}, we may now present our non-degeneracy assumption, which uses the support of $x^*$.  Note that each $i\in[\ngrp]$ and $\yhat\in\Ycalhat(x^*,\alpha^*)$ together satisfy $\| [\yhat]_{\Mcal(i)}\| \leq [\lambda]_i$ by the definition of $\Ycalhat(x^*,\alpha^*)$.  Therefore, the following non-degeneracy assumption is a strengthening of this inequality for all groups not in $\Scal(x^*)$.

\begin{assumption}\label{ass:nd}
The quantity
$$
\delta_{\text{nd}}
:= 
\begin{cases}
\min_{\yhat\in\Ycalhat(x^*,\alpha^*),  i\not\in\Scal(x^*)}     \big([\lambda]_i-\| [\yhat]_{\Mcal(i)} \| \big) & \text{if $\Scal(x^*) \subsetneqq [\ngrp]$,} \\
1 & \text{if $\Scal(x^*) = [\ngrp]$,}
\end{cases}
$$
satisfies $\delta_{\text{nd}} > 0$.  It follows that $\delta^* := \min\{1,\delta_{\text{nd}}\} \in (0,1]$.
\end{assumption}

The essence of the previous assumption is that, for each $i\notin \Scal(x^*)$, the set $\{[\yhat]_{\Mcal(i)}\}_{\yhat\in\Ycalhat(x^*,\alpha^*)}$ is bounded away from the boundary of the two-norm ball centered at zero of radius $[\lambda]_i$.  This  non-degeneracy assumption is crucial for  proving support identification results, where we note that a similar assumption has been used in \cite{sun19a,curtis2022}. In fact, Assumption~\ref{ass:nd} is an extension of the assumption used by these authors that is applicable to the overlapping $\grplone$ case. 

We also need minimal control over the sequences $\{\nu_k\}$ and $\{\rho_k\}$ from Lemma~\ref{leamma:feasbound}. 

\begin{assumption}\label{ass:upbd}
The sequences $\{\nu_k\}$ and $\{\rho_k\}$ defined in Lemma~\ref{leamma:feasbound} can be chosen to satisfy $\sup_{k \geq 0} \nu_k =: \nu_{\max} < \infty$ and $\inf_{k \geq 0} \rho_k =: \rho_{\min} > 0$.
\end{assumption}

Let us also choose any $\nu^*\in(0,\infty)$ and $\rho^* \in (0,\infty)$ satisfying
\begin{equation}\label{dist-star}
\dist\big(\yhat,\Ycalhat(x^*,\alpha^*)\big)
\leq \nu^* \| A\yhat-A y \|^{\rho^*}
\ \ \text{for all} \ \ (\yhat,y)\in \big(\Fcal_d, \Ycalhat(x^*,\alpha^*) \big), 
\end{equation}
which are guaranteed to exist because of Lemma~\ref{leamma:feasbound} with $(x_k,\alpha_k) = (x^*,\alpha^*)$. Using these quantities, we may now bound the distance between any \emph{exact} solution of the dual problem \eqref{prob:dual} and $\Ycalhat(x^*,\alpha^*)$, and the distance between the \emph{inexact} solution of \eqref{prob:dual} computed by Algorithm~\ref{alg:main} and the set $\Ycalhat(x_k,\alpha_k)$.

\begin{lemma}\label{lemma:two-bounds}
Under Assumptions~\ref{ass:active-set}--\ref{ass:upbd} the following hold. 
\begin{itemize}
    \item[(i)] With $\nu^*$ and $\rho^*$ from~\eqref{dist-star} and $\alpha^*$ from Lemma~\ref{lem-epsk-to-zero}, we have for each $k\in\N{}$ that 
        $$
         \dist(\yhat_k^*,\Ycalhat(x^*,\alpha^*)) \leq \nu^*\left(\tfrac{\norm{x_k^*-x^*}}{\alpha_k} + \left(L_g+\tfrac{1}{\alpha_k}\right)\norm{x_k-x^*}\right)^{\rho^*}
         \ \text{for all $\yhat_k^*\in\Ycalhat(x_k,\alpha_k)$.}
        $$
    \item[(ii)] The $\yhat_{k+1}$ returned by Algorithm~\ref{alg:dual-gradient-descent} satisfies
    $$
    \dist\big(\yhat_{k+1}, \Ycalhat(x_k,\alpha_k)\big) \leq \nu_{k}\left(\frac{2\epsilon_k}{\alpha_k}\right)^{\rho_{k}/2}.
    $$
\end{itemize}
\end{lemma}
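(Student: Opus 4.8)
The plan is to prove both parts by selecting a convenient reference point in the target dual solution set and then invoking the Hölderian error bound (Lemma~\ref{leamma:feasbound} for part (ii), or its specialization~\eqref{dist-star} at $(x^*,\alpha^*)$ for part (i)). In each case the error bound converts a bound on $\|A\yhat - Ay\|$ into a bound on the distance to the relevant solution set, so the substantive work is estimating $\|A\yhat - Ay\|$ through the linking equation and strong convexity/concavity of the primal/dual objectives.

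For part (i), I would take an arbitrary $\yhat_k^* \in \Ycalhat(x_k,\alpha_k) \subseteq \Fcal_d$ and any $y \in \Ycalhat(x^*,\alpha^*)$. The linking equation (Lemma~\ref{lemma:link}(ii)) gives $A\yhat_k^* = (x_k^* - u_k)/\alpha_k$, and substituting $u_k = x_k - \alpha_k\nabla f(x_k)$ yields $A\yhat_k^* = (x_k^* - x_k)/\alpha_k + \nabla f(x_k)$. On the other side, \eqref{phi-eq-shift} from Lemma~\ref{lem:dual-at-star} gives $Ay = \nabla f(x^*)$. Writing $x_k^* - x_k = (x_k^* - x^*) - (x_k - x^*)$, the triangle inequality together with $L_g$-Lipschitz continuity of $\nabla f$ produces exactly the claimed bound on $\|A\yhat_k^* - Ay\|$, after which \eqref{dist-star} finishes the proof. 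This part is essentially bookkeeping.

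For part (ii), the central observation is that strong concavity of the dual objective converts dual suboptimality into a bound on $\|A\yhat_{k+1} - Ay^*\|$. Since $\phid(\yhat) = -\tfrac{\alpha_k}{2}\|A\yhat\|^2 - u_k^TA\yhat$ depends on $\yhat$ only through $A\yhat$ and is strongly concave in that argument with parameter $\alpha_k$, for any maximizer $y^* \in \Ycalhat(x_k,\alpha_k)$ the first-order optimality inequality over $\Fcal_d$ eliminates the cross term and leaves $\phid(y^*) - \phid(\yhat_{k+1}) \geq \tfrac{\alpha_k}{2}\|A\yhat_{k+1} - Ay^*\|^2$; here $\yhat_{k+1} \in \Fcal_d$ because Algorithm~\ref{alg:dual-gradient-descent} projects onto $\Fcal_d$. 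Hence $\|A\yhat_{k+1} - Ay^*\|^2 \leq \tfrac{2}{\alpha_k}G$ with $G := \phid(y^*) - \phid(\yhat_{k+1})$ the \emph{true} dual suboptimality.

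It then remains to bound $G$ by $\epsilon_k$. Using strong duality $\phid(y^*) = \phi(x_k^*)$ and the fact that $x_k^*$ minimizes $\phi$ (so $\phi(x_k^*) \leq \phi(\xhat_{k+1})$) gives $G \leq \phi(\xhat_{k+1}) - \phid(\yhat_{k+1})$, i.e.\ the \emph{computed} primal--dual gap at termination. When $\opt = \optone$ the test in Line~\ref{line:pgd-term11} makes this gap at most $c_k\|\xhat_{k+1} - x_k\|^2 = \epsilon_k$; when $\opt = \opttwo$ the test in Line~\ref{line:pgd-term21}, combined with $\gamma_2 \in (0,1/2]$ and the definition $\epsilon_k = \gamma_2(\phi(x_k) - \phi(x_k^*))$, yields $G \leq \epsilon_k/(1-\gamma_2)$ after rearranging and using weak duality. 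Substituting the resulting bound on $G$ into the strong-concavity estimate and then into Lemma~\ref{leamma:feasbound} with $\yhat = \yhat_{k+1}$, $y = y^*$ gives the stated $\nu_k(2\epsilon_k/\alpha_k)^{\rho_k/2}$. The main obstacle I anticipate is precisely this last bridge: the termination test is phrased in terms of the \emph{projected} primal iterate $\xhat_{k+1}$ and the dual value $\phid(\yhat_{k+1})$, whereas the error bound requires controlling the \emph{unprojected} quantity $A\yhat_{k+1}$ against the true optimal dual value; reconciling the two via weak duality and $\phi(x_k^*) \leq \phi(\xhat_{k+1})$, and carefully tracking the $\gamma_2$-dependent constant in the $\opt = \opttwo$ case, is where the care is needed.
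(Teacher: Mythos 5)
Your proof follows essentially the same route as the paper's. Part (i) is the identical bookkeeping: the linking equation at $(x_k,\alpha_k)$, the fact that $Ay = \nabla f(x^*)$ for every $y\in\Ycalhat(x^*,\alpha^*)$, the triangle inequality with Lipschitz continuity of $\nabla f$, and then the error bound~\eqref{dist-star}. Part (ii) uses the same three ingredients as the paper --- strong concavity of the dual objective as a function of $A\yhat$, first-order optimality of a maximizer over $\Fcal_d$ to eliminate the cross term, and the termination test plus duality to bound the dual suboptimality by the computed gap --- before invoking Lemma~\ref{leamma:feasbound}. (The paper takes the maximizer to be $p_{k+1} = \proj{\Ycalhat(x_k,\alpha_k)}{\yhat_{k+1}}$, which makes the final distance bound immediate; your ``any maximizer $y^*$'' works equally well since the error bound holds for every point of $\Ycalhat(x_k,\alpha_k)$.)

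The one discrepancy is the constant in the $\opttwo{}$ case, and it cuts both ways. Your rearrangement correctly yields $G \leq \epsk/(1-\gamma_2)$, but substituting this into the strong-concavity estimate gives $\dist\big(\yhat_{k+1},\Ycalhat(x_k,\alpha_k)\big) \leq \nu_k\bigl(2\epsk/((1-\gamma_2)\alpha_k)\bigr)^{\rho_k/2}$, which exceeds the stated bound by a factor of $(1-\gamma_2)^{-\rho_k/2} \leq 2^{\rho_k/2}$; your claim that the stated constant follows is therefore not justified by your own intermediate bound. Interestingly, the paper's proof simply asserts that the computed gap satisfies $\phi(x_{k+1};x_k,\alpha_k) - \phid(\yhat_{k+1};x_k,\alpha_k) \leq \epsk$; that is exactly true under $\optone{}$ (the test in Line~\ref{line:pgd-term11} is precisely this inequality), but under $\opttwo{}$ the test in Line~\ref{line:pgd-term21} has right-hand side $\gamma_2\big(\phi(x_k;x_k,\alpha_k) - \phid(\yhat_{k+1};x_k,\alpha_k)\big) \geq \epsk$ by weak duality, so the paper's assertion requires the same rearrangement you performed and only delivers $\epsk/(1-\gamma_2)$. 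In other words, your extra care surfaces a constant that the paper glosses over; the mismatch is a benign factor of at most $2^{\rho_k/2}$ and is immaterial downstream, since Lemma~\ref{lemma:utltimate} and the support-identification theorem consume these bounds only through $\Ocal(\cdot)$ estimates.
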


\begin{proof}
To prove part (i), let $\yhat^*_k \in \Ycal(x_k,\alpha_k)$.  We may now define $y^* = \proj{\Ycalhat(x^*,\alpha^*)}{\yhat^*_k}$.  It now follows from~\eqref{dist-star} with $(\yhat,y) = (\yhat^*_k,y^*)$, Lemma~\ref{lemma:link}(i), the triangle inequality, and Assumption~\ref{ass.first} that
\begin{align*}
&\dist\big(\yhat_k^*,\Ycalhat(x^*,\alpha^*)\big) \\
&\leq \nu^*\norm{A\yhat_k^*-Ay^*}^{\rho^*}
= \nu^*\norm{\tfrac{x^*_k-u_k}{\alpha_k} - \tfrac{(x^*-u^*)}{\alpha^*} }^{\rho^*} 
= \nu^*\norm{\tfrac{x_k^*-x_k}{\alpha_k} + \grad f(x_k)-\grad f(x^*)}^{\rho^*} \\
&\leq \nu^*\left(\tfrac{\norm{x_k^*-x_k}}{\alpha_k} + L_g\norm{x_k-x^*}\right)^{\rho^*} 
\leq \nu^*\left(\tfrac{\norm{x_k^*-x^*}}{\alpha_k} + \left(L_g+\tfrac{1}{\alpha_k}\right)\norm{x_k-x^*}\right)^{\rho^*},
\end{align*}    
which establishes the claim in part (i) to be true.

We now prove part (ii). If we define $h_k(z) := \frac{\alpha_k}{2}\norm{z}^2 + u_k^Tz$, then it follows that $\phid(\yhat;x_k,\alpha_k) = -h_k(A\yhat)$. Also, to simply notation, let us define $p_{k+1} := \proj{\Ycalhat(x_k,\alpha_k)}{\yhat_{k+1}}$. Using this notation, we can observe from the properties that the $\yhat_{k+1}$ returned by Algorithm~\ref{alg:dual-gradient-descent} must satisfy that
$$
\phid(p_{k+1};x_k,\alpha_k) - \phid(\yhat_{k+1};x_k,\alpha_k) 
\leq \phi(x_{k+1};x_k,\alpha_k) - \phid(\yhat_{k+1};x_k,\alpha_k)
\leq \epsk.
$$
Combining this inequality with strong convexity of $h_k$ and $p_{k+1} \in \Ycalhat(x_k,\alpha_k)$ gives
$$
\begin{aligned}
\epsk
 & \geq \phid(p_{k+1};x_k,\alpha_k) - \phid(\yhat_{k+1};x_k,\alpha_k) 
 = h_k(A\yhat_{k+1})- h_k(Ap_{k+1})\\
 &\geq \grad h_k(Ap_{k+1})^TA(\yhat_{k+1}-p_{k+1}) + \tfrac{\alpha_k}{2}\norm{A\yhat_{k+1}-Ap_{k+1}}^2\\
 &\geq -\grad \phid(p_{k+1};x_k,\alpha_k)^T(\yhat_{k+1}-p_{k+1}) + \tfrac{\alpha_k}{2}\norm{A\yhat_{k+1}-Ap_{k+1}}^2
 \geq \tfrac{\alpha_k}{2}\norm{A\yhat_{k+1}-Ap_{k+1}}^2.
\end{aligned}
$$
The previous inequality, Lemma~\ref{leamma:feasbound} with $(\yhat,y) = (\yhat_{k+1},p_{k+1})$, and Lemma~\ref{lem:alpha-fixed} gives
$$
\dist\big(\yhat_{k+1}, \Ycalhat(x_k,\alpha_k)\big)
\leq \nu_k \big( \|A\yhat_{k+1}-Ap_{k+1}\|^2\big)^{\rho_k/2}
\leq \nu_{k}\left(\tfrac{2\epsilon_k}{\alpha_k}\right)^{\rho_{k}/2},
$$
thus completing the proof of part (ii).
\end{proof}

For the remainder of this section, our analysis applies to two difference scenarios that are defined below based on the rate at which $\{\epsilon_k\} \to 0$. The first scenario uses 
\begin{equation}\label{def:theta}
\theta := (1-\alpha_{0}\mu_f)\in [\eta,1),  
\end{equation}
with the inclusion following from Lemma~\ref{lem-epsk-to-zero} and $\mu_f\leq L_g$ since they imply that $1 > \theta \equiv 1-\alpha_{0}\mu_f \geq 1 - ((1-\eta)/L_g) L_g = \eta$.  We can now state our two scenarios.

\begin{strat}\label{s1}
For some $\psi \in(0,1)$, we have $\epsilon_k \leq \min\{\tfrac{\alpha_0}{2},  \psi^{2k} \theta^{2(k+1)}\}$ for all $k\in\N{}$. 
\end{strat}

\begin{strat}\label{s2}
For some $\omega\in(0,1)$, we have $\epsilon_{k+1} \leq \omega^2 \epsilon_{k}$ for all $k\in\N{}$ with $\epsilon_0 \leq \alpha_0/2$. By applying this bound recursively, it follows that $\epsilon_k \leq \omega^{2k} \epsilon_0 \leq \alpha_0/2$ for all $k\in\N{}$.
\end{strat}

We consider both of these scenarios for the following reasons.  To implement Strategy~\ref{s1} the value of $\theta$ must be known, which means that the strong convexity parameter $\mu_f$ must be known. (Of course, in practice, one could also attempt to estimate $\mu_f$.) If this parameter is known, this strategy may be a good choice.  On the other hand, Strategy~\ref{s2} can be implemented without any knowledge of the strong convexity parameter, which is an advantage, but we shall see that our support identification result depends on the size of $\omega$ relative to $\theta$. Finally, let us note that implementing either strategy requires controlling the size of each element of the sequence $\{\epsilon_k\}$. This can be done for each $k\in\N{}$ by checking the value of $\epsilon_k$ returned by Algorithm~\ref{alg:dual-gradient-descent} when called by Algorithm~\ref{alg:main}.  If the value returned is not small enough, then Algorithm~\ref{alg:main} could recall Algorithm~\ref{alg:dual-gradient-descent} but this time using as an initial iterate the value returned the previous time it was called. This ``recall if necessary" procedure can be repeated as necessary until a sufficiently small $\epsilon_k$ is obtained as required by the chosen strategy.

We now prove a rate of convergence for both $x_k^*$ and $x_{k+1}$ to the solution $x^*$.

\begin{lemma}\label{lemma:linear-convergence}
Let Assumption~\ref{ass.first} and Assumption~\ref{ass:active-set} hold.  It follows that
\begin{equation}\label{eq:rate-of-xk-convergence}
\norm{x_{k+1}-x^*}\leq \theta^{k+1}\left(\norm{x_{0}-x^*} + \sqrt{2\alpha_0}\sum_{i=0}^{k}\frac{\sqrt{\epsilon_i}}{\theta^{i+1}}\right) \text{ for all } k\in\N{}
\end{equation}
where $\theta$ is defined in~\eqref{def:theta}.  Consequently, the following rates of convergence hold:
\begin{itemize}
\item[(i)] 
If the sequence $\{\epsilon_k\}$ satisfies  Strategy~\ref{s1}, then 
\begin{align}
   \norm{x_{k+1}-x^*}
   &\leq \theta^{k+1}\left(\norm{x_0-x^*} + \sqrt{2\alpha_0}/(1-\psi)\right) \  \text{for all}  \ k\in\N{} \ \text{and} \label{rates-11}\\
   \norm{x^*_k-x^*} 
   &\leq \theta^{k+1}\left(\norm{x_0-x^*} + \sqrt{2\alpha_0}/(1-\psi) +  \sqrt{2\alpha_0}\psi\right) \  \text{for all}  \ k\in\N{}. \label{rates-12}
\end{align}
\item[(ii)] 
If the sequence $\{\epsilon_k\}$ satisfies  Strategy~\ref{s2}, then the following hold: 
  \begin{itemize}
    \item[(a)] If $\omega < \theta$, then \begin{align}
      \norm{x_{k+1}-x^*}
      &\leq
      \theta^{k+1}\Big(\norm{x_0-x^*} + \tfrac{\sqrt{2\alpha_0\epsilon_0}}{(\theta-\omega)}\Big) \ \ \text{for all $k\in\N{}$,} \label{rates-2a1} \\
         \norm{x^*_k-x^*} 
      &\leq
      \theta^{k+1}\Big(\norm{x_0-x^*} + \tfrac{\sqrt{2\alpha_0\epsilon_0}}{(\theta-\omega)} + \tfrac{\sqrt{2\alpha_0\epsilon_0}}{\omega}\Big) \ \ \text{for all $k\in\N{}$.} \label{rates-2a2}
    \end{align}  
    \item[(b)] If $\omega > \theta$, then
    \begin{align}
       \norm{x_{k+1}-x^*}
       &\leq \omega^{k+1}\big(\norm{x_0-x^*} + \tfrac{\sqrt{2\alpha_0\epsilon_0}}{\omega -\theta}\big) \ \ \text{for all $k\in\N{}$ and} \label{rates-2b1} \\
       \norm{x^*_k-x^*}
       &\leq 
       \omega^{k+1}\big(\norm{x_0-x^*} + \tfrac{\sqrt{2\alpha_0\epsilon_0}}{\omega -\theta} + \tfrac{\sqrt{2\alpha_0\epsilon_0}}{\omega}\big) \ \ \text{for all $k\in\N{}$}. \label{rates-2b2}
    \end{align}   
    \item[(c)] If $\omega = \theta$, then
    \begin{align}
      \norm{x_{k+1}-x^*}
      &\leq
      \theta^{k+1}\big(\norm{x_0-x^*} + (k+1)\tfrac{\sqrt{2\alpha_0\epsilon_0}}{\theta}\big) \ \ \text{for all $k\in\N{}$ and} \label{rates-2c1} \\
      \norm{x^*_k-x^*}
      &\leq
      \theta^{k+1}\big(\norm{x_0-x^*} + (k+1)\tfrac{\sqrt{2\alpha_0\epsilon_0}}{\theta} + \sqrt{2\alpha_0\epsilon_0}\omega^k \big)\ \ \text{for all $k\in\N{}$.} \label{rates-2c2}
    \end{align}
  \end{itemize}  
\end{itemize}
\end{lemma}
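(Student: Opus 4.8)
The plan is to establish the master inequality~\eqref{eq:rate-of-xk-convergence} first, and then derive each of the rate claims (i) and (ii) by substituting the appropriate bound on $\epsilon_k$ and summing an explicit geometric (or arithmetic-geometric) series. The key recursion will come from combining strong convexity of $f$ with the inexactness bound. Since $\alpha^* = \alpha_0 \le (1-\eta)/L_g < 1/\mu_f$ (via Lemma~\ref{lem-epsk-to-zero}), the exact proximal-gradient map $\T(\cdot,\alpha_0)$ is a contraction with factor $\theta = 1 - \alpha_0\mu_f \in [\eta,1)$ toward the fixed point $x^*$; that is, $\|\T(x_k,\alpha_0) - x^*\| = \|x_k^* - x^*\| \le \theta\|x_k - x^*\|$. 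The inexact step $x_{k+1}$ differs from $x_k^*$ by the $\epsilon_k$-PG error, which Lemma~\ref{lemma:chibound} bounds by $\|x_{k+1} - x_k^*\| \le \sqrt{2\alpha_0\epsilon_k}$. Combining via the triangle inequality gives the one-step recursion
$$
\|x_{k+1} - x^*\| \le \theta\|x_k - x^*\| + \sqrt{2\alpha_0\epsilon_k}.
$$
Unrolling this recursion and factoring out $\theta^{k+1}$ yields exactly~\eqref{eq:rate-of-xk-convergence}.

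\textbf{Deriving part (i).} Under Strategy~\ref{s1} we have $\epsilon_i \le \psi^{2i}\theta^{2(i+1)}$, so $\sqrt{\epsilon_i}/\theta^{i+1} \le \psi^i$. The sum $\sum_{i=0}^{k}\psi^i$ is bounded by the geometric series $1/(1-\psi)$, which immediately gives~\eqref{rates-11}. To obtain the companion bound~\eqref{rates-12} on $\|x_k^* - x^*\|$, I use $\|x_k^* - x^*\| \le \theta\|x_k - x^*\|$ (the contraction of the exact map) together with~\eqref{rates-11} at index $k$; the extra $\sqrt{2\alpha_0}\psi$ term arises from carefully tracking the single $\sqrt{\epsilon_k}$ contribution at the current index. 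Each substitution is routine once the master inequality is in hand.

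\textbf{Deriving part (ii).} Under Strategy~\ref{s2} we have $\sqrt{\epsilon_i} \le \omega^i\sqrt{\epsilon_0}$, so the summand in~\eqref{eq:rate-of-xk-convergence} becomes $\omega^i\sqrt{\epsilon_0}/\theta^{i+1} = (\sqrt{\epsilon_0}/\theta)(\omega/\theta)^i$. The three subcases then correspond to the standard trichotomy for summing $\sum_{i=0}^{k}(\omega/\theta)^i$: when $\omega<\theta$ the ratio is $<1$ and the sum is bounded by $\theta/(\theta(\theta-\omega))$-type constants, giving~\eqref{rates-2a1}; when $\omega>\theta$ the dominant term is $(\omega/\theta)^{k}$, and after multiplying back by $\theta^{k+1}$ the effective rate becomes $\omega^{k+1}$, giving~\eqref{rates-2b1}; and when $\omega=\theta$ each summand equals $\sqrt{\epsilon_0}/\theta$, so the sum is $(k+1)\sqrt{\epsilon_0}/\theta$, producing the arithmetic-geometric form~\eqref{rates-2c1}. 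The $\|x_k^* - x^*\|$ bounds~\eqref{rates-2a2}, \eqref{rates-2b2}, \eqref{rates-2c2} again follow from $\|x_k^* - x^*\| \le \theta\|x_k - x^*\|$ plus the single current-index correction term.

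\textbf{The main obstacle} I expect is purely bookkeeping: verifying that the constants produced by the geometric sums match exactly the stated right-hand sides, especially reconciling the shift between the index $k+1$ in $\|x_{k+1}-x^*\|$ and the index $k$ in $\|x_k^*-x^*\|$ (recall $x_k^* = \T(x_k,\alpha_k)$ is the \emph{exact} map at $x_k$, not at $x_{k+1}$). The one genuinely delicate point is establishing the recursion's contraction factor $\theta$: I must confirm that $\alpha_0 \le 1/\mu_f$ guarantees the exact PG map is a $\theta$-contraction, which uses strong convexity of $f$ (Assumption~\ref{ass:active-set}) and the firm nonexpansiveness of the proximal operator of the convex $r$. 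Everything downstream is mechanical geometric-series summation.
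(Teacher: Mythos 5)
Your proposal is correct and its core is the same as the paper's: both arguments establish the one-step recursion $\norm{x_{k+1}-x^*}\leq \theta\norm{x_k-x^*}+\sqrt{2\alpha_0\epsilon_k}$ from (a) nonexpansiveness of the proximal operator together with the fact that $I-\alpha_0\nabla f$ is Lipschitz with constant $\theta=1-\alpha_0\mu_f$ under strong convexity, and (b) Lemma~\ref{lemma:chibound} applied to $x_{k+1}\in\Tepsk(x_k,\alpha_0)$; both then unroll the recursion to get \eqref{eq:rate-of-xk-convergence} and finish with the same geometric-series trichotomy. Two differences are worth noting. First, for the companion bounds on $\norm{x_k^*-x^*}$ the paper uses the forward triangle inequality $\norm{x_k^*-x^*}\leq\norm{x_k^*-x_{k+1}}+\norm{x_{k+1}-x^*}\leq\sqrt{2\alpha_0\epsilon_k}+\norm{x_{k+1}-x^*}$ (see \eqref{eq:xkstar-xk}), which is exactly where the extra trailing terms in \eqref{rates-12}, \eqref{rates-2a2}, \eqref{rates-2b2}, and \eqref{rates-2c2} come from; you instead use the backward contraction $\norm{x_k^*-x^*}\leq\theta\norm{x_k-x^*}$, which is equally valid, yields slightly tighter constants (the stated bounds then follow a fortiori since the omitted terms are nonnegative), and sidesteps the index bookkeeping you flagged as the main obstacle. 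Second, a caution on your closing paragraph: the condition $\alpha_0\leq 1/\mu_f$ alone does \emph{not} guarantee that $I-\alpha_0\nabla f$ contracts with factor $1-\alpha_0\mu_f$; the operative requirement is $\alpha_0\leq 2/(L_g+\mu_f)$, which the paper obtains from $\alpha_0\leq(1-\eta)/L_g<1/L_g$ and $\mu_f\leq L_g$, and then proves the contraction via Nesterov's coercivity inequality combined with $\norm{\nabla f(x_k)-\nabla f(x^*)}\geq\mu_f\norm{x_k-x^*}$. Since you invoke $\alpha_0\leq(1-\eta)/L_g$ at the outset, your argument goes through, but the hypothesis doing the work is the bound relative to $L_g$, not to $\mu_f$.
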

\begin{proof}
We first prove~\eqref{eq:rate-of-xk-convergence} using a procedure similar to~\cite[Proposition 3]{schmidt2011}. It follows from Lemma~\ref{lem-epsk-to-zero}, optimality of $x^*$, and defining $p_k = \prox{\alpha^* r}{x_k-\alpha^*\grad f(x_k)}$ that
\begin{equation}\label{eq:xkbound1}
    \norm{x_{k+1}-x^*}^2  
    = \norm{x_{k+1}- p_k + p_k -\prox{\alpha^* r}{x^*-\alpha^*\grad f(x^*)}}^2.
\end{equation}
This equality, the Cauchy-Schwarz inequality, and non-expansivity of the proximal operator yields
\begin{align*}  
    \norm{x_{k+1}-x^*}^2 
    &\leq \norm{x_{k+1} - p_k}^2 + \norm{p_k -\prox{\alpha^* r}{x^*-\alpha^*\grad f(x^*)}}^2 \\
    &\quad + 2\norm{x_{k+1}- p_k}\norm{p_k - \prox{\alpha^* r}{x^*-\alpha^*\grad f(x^*)}} \\
    &\leq \norm{x_{k+1} - p_k}^2 + \norm{x_k - x^* - \alpha^*(\grad f(x_k) - \grad f(x^*))}^2 \\
    &\quad + 2\norm{x_{k+1} - p_k}\norm{x_k - x^* - \alpha^*(\grad f(x_k) - \grad f(x^*))}. 
\end{align*}
Combining this inequality, Lemma~\ref{lemma:chibound}, and the fact that, for all $k\in\N{}$, we know $\alpha_k = \alpha^*$ and $x_{k+1}=\xhat_{k+1}$ returned by Algorithm~\ref{alg:dual-gradient-descent} satisfy $x_{k+1} \in\Tepsk(x_k,\alpha^*)$ yields
\begin{align*}
 \norm{x_{k+1}-x^*}^2 
 &\leq 2\alpha^*\epsilon_k + \norm{x_k-x^*-\alpha^*(\grad f(x_k)-\grad f(x^*))}^2 \\
 &\quad + 2\sqrt{2\alpha^*\epsilon_k} \norm{x_k-x^*-\alpha^*(\grad f(x_k)-\grad f(x^*))}.
\end{align*}
To bound the norm that appears in the previous inequality, we may use the Cauchy-Schwarz inequality and~\cite[Theorem 2.1.12]{nesterov2004introductory} to obtain
\begin{align}
    &\|x_k - x^* - \alpha^*(\grad f(x_k)-\grad f(x^*))\|^2 \nonumber\\ 
    &= \norm{x_k-x^*}^2 + (\alpha^*)^2\norm{\grad f(x_k)-\grad f(x^*)}^2 - 2\alpha^*(x_k-x^*)^T\left(\grad f(x_k)-\grad f(x^*)\right) \nonumber\\
    &\leq \norm{x_k-x^*}^2 + (\alpha^*)^2\norm{\grad f(x_k)-\grad f(x^*)}^2  \nonumber \\
    &\quad - 2\alpha^*\left(\tfrac{L_g\mu_f}{L_g+\mu_f}\norm{x_k-x^*}^2 + \tfrac{1}{\mu_f+L_g}\norm{\grad f(x_k)-\grad f(x^*)}^2\right)\nonumber\\
    &= \left(1 - \tfrac{2\alpha^* L_g\mu_f}{L_g+\mu_f}\right)\norm{x_k-x^*}^2 + \alpha^* \left(\alpha^*- \tfrac{2}{L_g+\mu_f}\right)\norm{\grad f(x_k)-\grad f(x^*)}^2.\label{eq:xkbound2}
\end{align}
From $\eta \in (0,1)$ and Lemma~\ref{lem-epsk-to-zero} we have $\alpha^*\in(0,(1-\eta)/L_g] \subset (0,1/L_g)$.  Combining this with $\mu_f \leq L_g$ we have $\alpha^* < 2/(L_g+\mu_f)$, and from strong convexity of $f$ that $\norm{\grad f(x_k)-\grad f(x^*)}^2\geq \mu_f^2\norm{x_k-x^*}^2$. Combining these two facts with \eqref{eq:xkbound2} gives
\begin{equation}\label{eq:xkbound3}
\begin{aligned}
     \|x_k-x^*&-\alpha^*(\grad f(x_k)-\grad f(x^*))\|^2 \\
     &\leq \left(1 - \tfrac{2\alpha^* L_g\mu_f}{L_g+\mu_f}\right)\norm{x_k-x^*}^2 + \alpha^* \mu_f^2\left(\alpha^*- \tfrac{2}{L_g+\mu_f}\right)\norm{x_k-x^*}^2 \\
     &= \Big(1 - \Big(\tfrac{2\alpha^* L_g\mu_f}{L_g+\mu_f} + \tfrac{2\alpha^* \mu_f^2}{L_g+\mu_f}\Big) + (\alpha^*)^2\mu_f^2\Big)\norm{x_k-x^*}^2 \\
     &= (1-\alpha^*\mu_f)^2\norm{x_k-x^*}^2 
     = \theta^2\norm{x_k-x^*}^2,
\end{aligned}
\end{equation}
where the last equality follows from Lemma~\ref{lem-epsk-to-zero}. Combining \eqref{eq:xkbound1} and \eqref{eq:xkbound3} gives
$$
\begin{aligned}
    \norm{x_{k+1}-x^*}^2
    \leq \theta^2\norm{x_k-x^*}^2 + 2\sqrt{2\alpha^*\epsilon_k}\theta\norm{x_k-x^*} + 2\alpha^*\epsilon_k 
    = \big(\theta\norm{x_k-x^*} + \sqrt{2\alpha^*\epsilon_k}\, \big)^2.
\end{aligned}
$$
Taking the square root of the previous inequality and applying it  recursively shows 
$$
\norm{x_{k+1}-x^*}\leq \theta^{k+1}\norm{x_0-x^*} + \sum_{i=0}^{k}\theta^{k-i}\sqrt{2\alpha_i\epsilon_i} \ \ \text{for all} \ \ k\in\N{}.
$$
If we now use basic algebra and the fact that $\alpha_i = \alpha_0$ for all $i\in\N{}$, we arrive at 
$$
\norm{x_{k+1}-x^*}\leq \theta^{k+1}\left(\norm{x_0-x^*} + \sqrt{2\alpha_0} \sum_{i=0}^{k}\frac{\sqrt{\epsilon_i}}{\theta^{i+1}}\right) \ \ \text{for all} \ \ k\in\N{},
$$
which proves that the inequality in \eqref{eq:rate-of-xk-convergence} holds.

Before proving the remaining results, let us make a few observations.  First, since  $\alpha_k = \alpha_0$ for all $k\in\N{}$ (see Lemma~\ref{lem-epsk-to-zero}) it follows from the construction of Algorithm~\ref{alg:main} that $x_{k+1} = \xhat_{k+1}$ for all $k\in\N{}$.  Combining this fact with the triangle inequality, Lemma~\ref{lemma:chibound} with  $x_{k+1} = \xhat_{k+1} \in \Tepsk(x_k,\alpha_k)$ (which holds by construction of Algorithm~\ref{alg:main}), and the fact that $\alpha_k = \alpha_0$ for all $k\in\N{}$ shows that
\begin{equation}\label{eq:xkstar-xk}
    \norm{x_k^*-x^*}
    \leq \norm{x_k^*-x_{k+1}} + \norm{x_{k+1}-x^*}
    \leq \sqrt{2\alpha_0\epsk} + \norm{x_{k+1}-x^*}.
\end{equation}
We now consider specific choices for the sequence $\{\epsilon_k\}$ to derive the remaining results.

\smallskip
\noindent\textbf{Part (i).} Let $\{\epsilon_k\}$ satisfy the bound in Strategy~\ref{s1}. Applying this bound on $\epsilon_k$ for all $k\in\N{}$ to the right-hand side of \eqref{eq:rate-of-xk-convergence} leads to a  geometric sum with factor $\psi$, from which~\eqref{rates-11} follows.  Combining~\eqref{rates-11} with the assumed bound on $\epsilon_k$ for all $k\in\N{}$, \eqref{eq:xkstar-xk}, and $\psi\in(0,1)$ proves that~\eqref{rates-12} holds.

\smallskip
\noindent\textbf{Part (ii)}.
Let $\{\epsilon_k\}$ satisfy Strategy~\ref{s2} so that $\epsk \leq \omega^{2k}\epsilon_0$. This fact and~\eqref{eq:rate-of-xk-convergence} gives
\begin{equation}\label{eq:rate-cases}
    \norm{x_{k+1}-x^*}
    \leq \theta^{k+1}\left(\norm{x_0-x^*} + \frac{\sqrt{2\alpha_0\epsilon_0}}{\theta}\sum_{i=0}^{k}\left(\frac{\omega}{\theta}\right)^i\right).
\end{equation}
We can now consider the three sub-parts (a), (b), and (c) in turn.

To prove part (ii)(a), suppose that $\omega < \theta$. It follows that 
$\sum_{i=0}^{k}\left(\omega/\theta\right)^i \leq \sum_{i=0}^{\infty}\left(\omega/\theta\right)^i
=\frac{1}{1-\omega/\theta}
$. 
Combining this inequality with~\eqref{eq:rate-cases} shows that~\eqref{rates-2a1} holds.  Combining~\eqref{rates-2a1} with~\eqref{eq:xkstar-xk}, $\epsk \leq \omega^{2k}\epsilon_0$, and $\omega < \theta$ gives~\eqref{rates-2a2}, completing this proof.

To prove part (ii)(b), suppose that $\omega > \theta$.  Let us first observe by using basic algebra and the fact that a geometric series with factor $\theta/\omega \in (0,1)$ is finite that
$$
\theta^{k+1}\sum_{i=0}^{k}\left(\frac{\omega}{\theta}\right)^i
= \omega^{k+1}\sum_{i=1}^{k+1}\left(\frac{\theta}{\omega}\right)^{i}
= \omega^k\theta \sum_{i=0}^k \left(\frac{\theta}{\omega}\right)^i
\leq \omega^k\theta \sum_{i=0}^\infty \left(\frac{\theta}{\omega}\right)^i
= \frac{\omega^k\theta}{1-\theta/\omega}
= \frac{\omega^{k+1}\theta}{\omega-\theta}.
$$
This inequality can be combined with~\eqref{eq:rate-cases} and $\omega > \theta$ to conclude that 
$$
\norm{x_{k+1}-x^*}
\leq \theta^{k+1}\norm{x_0-x^*} + \omega^{k+1}\frac{\sqrt{2\alpha_0\epsilon_0}}{
\omega-\theta}
\leq \omega^{k+1}\left(\norm{x_0-x^*} + \frac{\sqrt{2\alpha_0\epsilon_0}}{\omega -\theta}\right),
$$
thus proving that~\eqref{rates-2b1} holds.  Then, combining~\eqref{rates-2b1} with~\eqref{eq:xkstar-xk} and $\epsk \leq \omega^{2k}\epsilon_0$ gives~\eqref{rates-2b2}.

To prove part (ii)(c), suppose that $\omega = \theta$.  In this case, we know that $\sum_{i=0}^{k}\left(\frac{\omega}{\theta}\right)^i = k+1$.  Combining this equality with~\eqref{eq:rate-cases} establishes the inequality in~\eqref{rates-2c1}.  Then, combining~\eqref{rates-2c1} with~\eqref{eq:xkstar-xk} and $\epsk \leq \omega^{2k}\epsilon_0$ gives~\eqref{rates-2c2}, thus completing the proof.
\end{proof}

\begin{lemma}\label{lemma:utltimate}
Let Assumption~\ref{ass:active-set} and Assumption~\ref{ass:nd} hold.  Then, for all $k\in\N{}$, the vector $\yhat_{k+1}$  returned by Algorithm~\ref{alg:dual-gradient-descent} satisfies the following bounds:
\begin{itemize}
    \item[(i)]
    If the sequence $\{\epsilon_k\}$ satisfies  Strategy~\ref{s1}, then
    \begin{align*}
    \dist\big(\yhat_{k+1},\Ycalhat(x^*,\alpha^*)\big) 
    &\equiv
    \| \yhat_{k+1}-\proj{\Ycalhat(x^*,\alpha^*)}{\yhat_{k+1}} \| \\
    &= \Ocal\big(\big[\theta^{\min\{\rho_{\min}, \rho^*\}}\big]^k\big) \ \ \text{for all $k\in\N{}$.}
    \end{align*}
    \item[(ii)] 
    If the sequence $\{\epsilon_k\}$ satisfies  Strategy~\ref{s2}, then
    \begin{align*}
    \dist\big(\yhat_{k+1},\Ycalhat(x^*,\alpha^*)\big) 
    &\equiv
    \norm{\yhat_{k+1}-\proj{\Ycalhat(x^*,\alpha^*)}{\yhat_{k+1}}} \\
    &= 
    \begin{cases}
\Ocal\left(\big[\max\{\omega^{\rho_{\min}},\theta^{\rho^*}\}\big]^k\right) & \text{ if } \omega<\theta,  \\
\Ocal\left([\omega^{\min\{\rho_{\min}, \rho^*\}}]^k\right) & \text{ if } \omega>\theta, \\
\Ocal\left((k\theta^k)^{\min\{\rho_{\min}, \rho^*\}}\right) & \text{ if } \omega=\theta.
\end{cases}
    \end{align*}
    \end{itemize}
\end{lemma}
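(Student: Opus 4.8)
The plan is to bound $\dist(\yhat_{k+1},\Ycalhat(x^*,\alpha^*))$ by splitting it, via the triangle inequality, into a term measuring the \emph{inexactness} of $\yhat_{k+1}$ as a dual solution at $(x_k,\alpha_k)$ and a term measuring how far the \emph{exact} dual solution set $\Ycalhat(x_k,\alpha_k)$ has drifted from $\Ycalhat(x^*,\alpha^*)$. Concretely, for each $k$ I would define $p_{k+1} := \proj{\Ycalhat(x_k,\alpha_k)}{\yhat_{k+1}} \in \Ycalhat(x_k,\alpha_k)$, so that
$$
\dist(\yhat_{k+1},\Ycalhat(x^*,\alpha^*)) \leq \|\yhat_{k+1}-p_{k+1}\| + \dist(p_{k+1},\Ycalhat(x^*,\alpha^*)).
$$
The first summand equals $\dist(\yhat_{k+1},\Ycalhat(x_k,\alpha_k))$ and is controlled by Lemma~\ref{lemma:two-bounds}(ii); the second, because $p_{k+1}\in\Ycalhat(x_k,\alpha_k)$, is controlled by Lemma~\ref{lemma:two-bounds}(i) applied with $\yhat_k^* = p_{k+1}$.

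Next I would make these two bounds uniform in $k$ and convert them into geometric rates. For the first summand, Lemma~\ref{lemma:two-bounds}(ii) gives $\|\yhat_{k+1}-p_{k+1}\|\leq \nu_k(2\epsilon_k/\alpha_k)^{\rho_k/2}$; using Lemma~\ref{lem-epsk-to-zero} to replace $\alpha_k$ by the constant $\alpha_0$, the uniform bounds $\nu_k\leq\nu_{\max}$ and $\rho_k\geq\rho_{\min}$ from Assumption~\ref{ass:upbd}, and---crucially---the fact that each strategy enforces $\epsilon_k\leq\alpha_0/2$ (so the base $2\epsilon_k/\alpha_0\leq 1$ and lowering the exponent to $\rho_{\min}/2$ only increases the quantity), I obtain $\|\yhat_{k+1}-p_{k+1}\| = \Ocal(\epsilon_k^{\rho_{\min}/2})$. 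Substituting $\epsilon_k\leq\psi^{2k}\theta^{2(k+1)}$ under Strategy~\ref{s1} yields $\Ocal((\theta^{\rho_{\min}})^k)$ (the extra factor $\psi^{\rho_{\min}k}\leq 1$ is harmless), while $\epsilon_k\leq\omega^{2k}\epsilon_0$ under Strategy~\ref{s2} yields $\Ocal((\omega^{\rho_{\min}})^k)$. For the second summand, Lemma~\ref{lemma:two-bounds}(i) with $\alpha_k=\alpha_0$ gives a bound of the form $\nu^*(C\|x_k^*-x^*\|+C'\|x_k-x^*\|)^{\rho^*}$ for constants $C,C'$, so it is $\Ocal((\|x_k^*-x^*\|+\|x_k-x^*\|)^{\rho^*})$, into which I would insert the iterate convergence rates from Lemma~\ref{lemma:linear-convergence}.

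Finally I would assemble the two summands and carry out the case analysis. Under Strategy~\ref{s1}, Lemma~\ref{lemma:linear-convergence}(i) gives $\|x_k-x^*\|,\|x_k^*-x^*\| = \Ocal(\theta^k)$, making the second summand $\Ocal((\theta^{\rho^*})^k)$; combining with the first summand's $\Ocal((\theta^{\rho_{\min}})^k)$ produces $\Ocal((\theta^{\min\{\rho_{\min},\rho^*\}})^k)$, proving part (i). Under Strategy~\ref{s2} I would split into the three cases. When $\omega<\theta$, Lemma~\ref{lemma:linear-convergence}(ii)(a) gives $\Ocal(\theta^k)$ for the iterates, so the second summand is $\Ocal((\theta^{\rho^*})^k)$ and the total is $\Ocal((\max\{\omega^{\rho_{\min}},\theta^{\rho^*}\})^k)$; when $\omega>\theta$, part (ii)(b) gives $\Ocal(\omega^k)$, so the second summand is $\Ocal((\omega^{\rho^*})^k)$ and the total is $\Ocal((\omega^{\min\{\rho_{\min},\rho^*\}})^k)$; when $\omega=\theta$, part (ii)(c) gives $\Ocal(k\theta^k)$, so the second summand is $\Ocal((k\theta^k)^{\rho^*})$. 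The main obstacle---the one step requiring genuine care rather than bookkeeping---is the $\omega=\theta$ case, where I must verify that the first summand $\Ocal((\theta^k)^{\rho_{\min}})$ is absorbed into $\Ocal((k\theta^k)^{\min\{\rho_{\min},\rho^*\}})$. This holds because $k\theta^k\to 0$ eventually drops below $1$, so for large $k$ a larger exponent gives a smaller value, while the factor $k^{\min\{\rho_{\min},\rho^*\}}\geq 1$ only helps; checking all four sign choices of $(\rho_{\min}-\rho^*)$ against $(\theta^k)$ versus $(k\theta^k)$ confirms both summands are dominated by the stated expression. The remaining work is routine algebra with the geometric-series constants already computed in Lemma~\ref{lemma:linear-convergence}.
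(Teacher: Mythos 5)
Your proposal is correct and follows essentially the same route as the paper: the same split of $\dist(\yhat_{k+1},\Ycalhat(x^*,\alpha^*))$ through $p_{k+1}=\proj{\Ycalhat(x_k,\alpha_k)}{\yhat_{k+1}}$, with Lemma~\ref{lemma:two-bounds}(ii) controlling the inexactness term and Lemma~\ref{lemma:two-bounds}(i) (applied at $p_{k+1}$) controlling the drift term, followed by the same uniformization via Assumption~\ref{ass:upbd} and $\epsilon_k\leq\alpha_0/2$ and the same case analysis using Lemma~\ref{lemma:linear-convergence}. The only cosmetic difference is that you invoke the $1$-Lipschitz property of the distance function (getting constant $1$ on the first term) where the paper inserts an extra projection $p_{k+1}^*$ and non-expansiveness (getting constant $2$), and your explicit absorption check in the $\omega=\theta$ case is a detail the paper leaves implicit; both are immaterial to the stated $\Ocal(\cdot)$ bounds.
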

\begin{proof}
Let us define $p_{k+1} = \proj{\Ycalhat(x_k,\alpha_k)}{\yhat_{k+1}}$ and $p_{k+1}^* = \proj{\Ycalhat(x^*,\alpha^*)}{p_{k+1}}$. The triangle inequality and non-expansiveness of the projection operator shows that
\begin{align}
 &\norm{\yhat_{k+1} - \proj{\Ycalhat(x^*,\alpha^*)}{\yhat_{k+1}}} \nonumber \\
 &= \|\yhat_{k+1}- p_{k+1} + p_{k+1} 
 - p_{k+1}^* + p_{k+1}^* - \proj{\Ycalhat(x^*,\alpha^*)}{\yhat_{k+1}}\|\nonumber\\
 &\leq \| \yhat_{k+1}- p_{k+1} \| + \|p_{k+1} - p_{k+1}^*\| + \|p_{k+1}^* - \proj{\Ycalhat(x^*,\alpha^*)}{\yhat_{k+1}}\| \nonumber\\
 &\leq 
 2\| \yhat_{k+1}- p_{k+1} \| + \|p_{k+1} - p_{k+1}^* \| 
 = 2\, \dist\big(\yhat_{k+1},\Ycalhat(x_k,\alpha_k)\big) + \dist\big(p_{k+1},\Ycalhat(x^*,\alpha^*)\big). \nonumber
\end{align}
This inequality, Lemma~\ref{lemma:two-bounds}, and  $\alpha_k = \alpha^*$ for all $k\in\N{}$ (see Lemma~\ref{lem-epsk-to-zero}) 
show that
\begin{equation}\label{eq:approx-dual-sol-distance-pre}
\norm{\yhat_{k+1} - \proj{\Ycalhat(x^*,\alpha^*)}{\yhat_{k+1}}}
\leq 
2\nu_{k}\left(\tfrac{2\epsilon_k}{\alpha^*}\right)^{\tfrac{\rho_{k}}{2}} + \nu^*\Big(\tfrac{\norm{x_k^*-x^*}}{\alpha^*} + (L_g+\tfrac{1}{\alpha^*})\norm{x_k-x^*}\Big)^{\rho^*}.
\end{equation}
Note that regardless of which case in the statement of the lemma we are considering (i.e., Strategy~\ref{s1} for part (i) or Strategy~\ref{s2} for part (ii)), we know that $\epsilon_k \leq \alpha_0/2 \equiv \alpha^*/2$ for all $k\in\N{}$.  This bound, \eqref{eq:approx-dual-sol-distance-pre}, and Assumption~\ref{ass:upbd} together show that
\begin{equation}\label{eq:approx-dual-sol-distance}
\norm{\yhat_{k+1} - \proj{\Ycalhat(x^*,\alpha^*)}{\yhat_{k+1}}}
\leq 
2\nu_{\max}\left(\tfrac{2\epsilon_k}{\alpha^*}\right)^{\tfrac{\rho_{\min}}{2}} + \nu^*\Big(\tfrac{\norm{x_k^*-x^*}}{\alpha^*} + (L_g+\tfrac{1}{\alpha^*})\norm{x_k-x^*}\Big)^{\rho^*}.
\end{equation}
We can now consider the two cases in the statement of the lemma.

For part (i), $\{\epsilon_k\}$ satisfies Strategy~\ref{s1} so that $\epsilon_k \leq \min\{\alpha_0/2,  \psi^{2k} \theta^{2(k+1)}\}$ for all $k\in\N{}$.  Combining this bound, \eqref{eq:approx-dual-sol-distance}, Lemma~\ref{lemma:linear-convergence}(i), $\theta\in[\eta,1)$, and $\psi\in(0,1)$ gives
\begin{align*}
\norm{\yhat_{k+1}-\proj{\Ycalhat(x^*,\alpha^*)}{\yhat_{k+1}}}
&=
\Ocal\big[\big(\psi^k\theta^{k+1}\big)^{\rho_{\min}}\big] + \big[\Ocal(\theta^{k+1}) + \Ocal(\theta^{k})\big]^{\rho^*} \\
&= \Ocal\big([\theta^{\rho^{\min}}]^k\big) + \Ocal\big([\theta^{\rho^*}]^k\big)
= \Ocal\big(\big[\theta^{\min\{\rho_{\min}, \rho^*\}}\big]^k\big),
\end{align*}
which completes the proof for part (i) of this lemma.

For part (ii),  $\{\epsilon_k\}$ satisfies Strategy~\ref{s2} so that $\epsilon_k \leq \omega^{2k}\epsilon_0$ for all $k\in\N{}$.  This bound,   \eqref{eq:approx-dual-sol-distance}, and Lemma~\ref{lemma:linear-convergence} yield
$$
\begin{aligned}
&\norm{\yhat_{k+1}-\proj{\Ycalhat(x^*,\alpha^*)}{\yhat_{k+1}}} \\
&=
\begin{cases}
\Ocal\big[\left(\omega^{k}\right)^{\rho_{\min}}\big] + \left[\Ocal\left(\theta^{k+1}\right) + \Ocal(\theta^{k})\right]^{\rho^*} & \text{ if } \omega<\theta, \\ 
\Ocal\big[\left(\omega^k\right)^{\rho_{\min}}\big] + \left[\Ocal\left(\omega^{k+1}\right) + \Ocal(\omega^{k})\right]^{\rho^*} & \text{ if } \omega>\theta, \\
\Ocal\big[\left(\omega^k\right)^{\rho_{\min}}\big] + \left[\Ocal\left((k+1)\theta^{k+1}\right) + \Ocal(k\theta^{k})\right]^{\rho^*} & \text{ if } \omega=\theta,
\end{cases}
\\
&=
\begin{cases}
\Ocal\left(\big[\max\{\omega^{\rho_{\min}},\theta^{\rho^*}\}\big]^k\right) & \text{ if } \omega<\theta,  \\
\Ocal\left([\omega^{\min\{\rho_{\min}, \rho^*\}}]^k\right) & \text{ if } \omega>\theta, \\
\Ocal\left((k\theta^k)^{\min\{\rho_{\min}, \rho^*\}}\right) & \text{ if } \omega=\theta,
\end{cases}
\end{aligned}
$$
which completes the proof for part (ii) of this lemma.
\end{proof}

We are now ready to present our main support identification theorem.

\begin{theorem}
Let Assumption~\ref{ass:active-set}--Assumption~\ref{ass:upbd} hold, and define
$$
\Theta
:= 
\begin{cases}
\min\{1, \min_{i\in\Scal(x^*)} \norm{[x^*]_{g_i}} \} & \text{if $\Scal(x^*)\neq\emptyset$,} \\
1 & \text{otherwise.}
\end{cases}
$$
Then, the following results hold: 
\begin{itemize}
    \item[(i)] If $\{\epsilon_k\}$ satisfies Strategy~\ref{s1}, 
    then  $\Scal(x_{k+1})=\Scal(x^*)$ for all $k\geq K_1$ with 
    $$
    K_1 := \max\left\{\Ocal\left(\frac{\log \Theta}{\log \theta}\right), \Ocal\left(\frac{\log \delta^*}{\log \max\left\{\left[\theta^{\min\{\rho_{\min},\rho^*\}}\right],[\Psi\theta]^{2\iota}\right\}}\right)\right\}. 
    $$
    \item[(ii)] If $\{\epsilon_k\}$ satisfies Strategy~\ref{s2}, 
    then 
    $\Scal(x_{k+1})=\Scal(x^*)$ for all $k\geq K_2$ with
    $$
    K_2 :=  \begin{cases}
            \max\left(
            \Ocal\left(\frac{\log \Theta}{\log\theta}\right), 
            \Ocal\left(\frac{\log {\delta^*}}{\log\left(\max\{\omega^{\rho_{\min}},\theta^{\rho^*},\omega^{2\iota}\}\right)}\right)
            \right) & \text{ if } \omega<\theta,\\
             \max\left(
             \Ocal\left(\frac{\log \Theta}{\log\omega}\right), 
             \Ocal\left(\frac{\log {\delta^*}}{\log\left(\max\{\omega^{\min\{\rho_{\min}, \rho^*\}}, \omega^{2\iota}\}\right)}\right)
             \right) & \text{ if } \omega>\theta, \\
           \max\left(\Ocal(C_{\Theta}), \Ocal(C_{\delta^*})\right) & \text{ if } \omega=\theta,
        \end{cases}
$$
$C_{\Theta}$ is the smallest nonnegative integer such that $(k+1)\omega^{k+1}< \Theta$ for all $k\geq C_{\Theta}$, and $C_{\delta^*}$ is the smallest nonnegative integer such that $(k\omega^k)^{\min\{\rho_{\min},\rho^*\}} + \omega^{2\iota k}< \delta^*$ for all $k\geq C_{\delta^*}$.
\end{itemize} 
\end{theorem}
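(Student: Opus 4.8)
The plan is to prove the two inclusions $\Scal(x_{k+1}) \supseteq \Scal(x^*)$ and $\Scal(x_{k+1}) \subseteq \Scal(x^*)$ separately for all sufficiently large $k$, then take $K_1$ (resp.\ $K_2$) to be the maximum of the resulting two thresholds. The key structural facts I would invoke up front are that, under Assumptions~\ref{ass:active-set}--\ref{ass:alpha}, Lemma~\ref{lem-epsk-to-zero} gives $\alpha_k = \alpha^*$ for all $k$, and the construction of Algorithm~\ref{alg:main} then forces the unit step to be accepted so that $x_{k+1} = \xhat_{k+1}$, where $\xhat_{k+1}$ is exactly the projected vector from Line~\ref{line:proj-primal} of Algorithm~\ref{alg:dual-gradient-descent}. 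Consequently, at the terminating inner index, $[\xhat_{k+1}]_{g_i} = 0$ whenever $i$ lies in the predicted set (equivalently $\|[\yhat_{k+1}]_{\Mcal(i)}\| < [\lambda]_i - \epsilon_{k-1}^\iota$), and $[\xhat_{k+1}]_{g_i} = [u_k + \alpha^* A\yhat_{k+1}]_{g_i}$ otherwise.

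First I would handle the nonzero groups, i.e.\ $i \in \Scal(x^*) \Rightarrow [\xhat_{k+1}]_{g_i} \neq 0$. By definition of $\Theta$ we have $\|[x^*]_{g_i}\| \geq \Theta > 0$ for each such $i$. If $i$ were placed in the predicted set, then $[\xhat_{k+1}]_{g_i} = 0$ and hence $\|x_{k+1} - x^*\| \geq \|[\xhat_{k+1} - x^*]_{g_i}\| = \|[x^*]_{g_i}\| \geq \Theta$; therefore, once $\|x_{k+1} - x^*\| < \Theta$, the group $i$ is \emph{not} predicted zero, and the reverse triangle inequality gives $\|[\xhat_{k+1}]_{g_i}\| \geq \Theta - \|x_{k+1} - x^*\| > 0$. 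Thus this inclusion reduces entirely to requiring $\|x_{k+1} - x^*\| < \Theta$, which by Lemma~\ref{lemma:linear-convergence} holds once $k$ exceeds the $\Ocal(\log\Theta/\log\theta)$ threshold (Strategy~\ref{s1}, and Strategy~\ref{s2} with $\omega < \theta$), the $\Ocal(\log\Theta/\log\omega)$ threshold (Strategy~\ref{s2} with $\omega > \theta$), or $C_\Theta$ (Strategy~\ref{s2} with $\omega = \theta$, where the factor $(k+1)\theta^{k+1}$ precludes a closed form).

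Next I would handle the zero groups, i.e.\ $i \notin \Scal(x^*) \Rightarrow [\xhat_{k+1}]_{g_i} = 0$, for which it suffices to show $\|[\yhat_{k+1}]_{\Mcal(i)}\| < [\lambda]_i - \epsilon_{k-1}^\iota$. Writing $y^* := \proj{\Ycalhat(x^*,\alpha^*)}{\yhat_{k+1}}$ and using Assumption~\ref{ass:nd}, which gives $\|[y^*]_{\Mcal(i)}\| \leq [\lambda]_i - \delta^*$ for $i \notin \Scal(x^*)$, the triangle inequality yields $\|[\yhat_{k+1}]_{\Mcal(i)}\| \leq [\lambda]_i - \delta^* + \dist(\yhat_{k+1}, \Ycalhat(x^*,\alpha^*))$, so it is enough to force $\dist(\yhat_{k+1}, \Ycalhat(x^*,\alpha^*)) + \epsilon_{k-1}^\iota < \delta^*$. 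Both terms decay geometrically: the distance at the rate supplied by Lemma~\ref{lemma:utltimate}, and $\epsilon_{k-1}^\iota$ at the rate dictated by the strategy (of order $[(\psi\theta)^{2\iota}]^k$ under Strategy~\ref{s1} and $\omega^{2\iota k}$ under Strategy~\ref{s2}). Bounding the sum by the maximum of these rates and requiring it to fall below $\delta^*$ produces precisely the second thresholds appearing in $K_1$ and $K_2$, with $C_{\delta^*}$ covering the $\omega = \theta$ case.

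Combining the two parts, both inclusions hold simultaneously for $k$ beyond the maximum of the corresponding thresholds, which is exactly $K_1$ under Strategy~\ref{s1} and $K_2$ under Strategy~\ref{s2}, giving $\Scal(x_{k+1}) = \Scal(x^*)$. The main obstacle is the bookkeeping in the $\omega = \theta$ subcase of Strategy~\ref{s2}: there the rates carry polynomial prefactors, namely $(k+1)\theta^{k+1}$ for $\|x_{k+1}-x^*\|$ and $(k\theta^k)^{\min\{\rho_{\min},\rho^*\}}$ for the dual distance, so the defining inequalities cannot be inverted in closed form and must be encapsulated by the implicitly defined $C_\Theta$ and $C_{\delta^*}$; additional care is needed to confirm that the constant factors hidden in the $\Ocal(\cdot)$ estimates do not alter the stated logarithmic threshold expressions.
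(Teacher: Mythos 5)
Your proposal is correct and follows essentially the same two-claim decomposition as the paper's own proof: zero groups are handled via Lemma~\ref{lemma:utltimate}, Assumption~\ref{ass:nd}, and the predicted-set mechanism of Algorithm~\ref{alg:dual-gradient-descent} (forcing $\dist(\yhat_{k+1},\Ycalhat(x^*,\alpha^*)) + \epsilon_{k-1}^\iota < \delta^*$), while nonzero groups are handled via Lemma~\ref{lemma:linear-convergence} and the threshold $\|x_{k+1}-x^*\| < \Theta$, with the final index taken as the maximum of the two thresholds. The only cosmetic difference is your extra contrapositive step ruling out membership of support groups in the predicted set, which the paper bypasses by applying the reverse triangle inequality directly.
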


\begin{proof}
The proof begins by establishing two claims, which we consider one at a time.

\medskip
\noindent
\textbf{Claim 1.} We claim that   $\norm{[\yhat_{k+1}]_{\Mcal(i)}}<[\lambda]_i-\epsilon_{k-1}^{\iota}$ and $[x_{k+1}]_{g_i} = 0$ for all $i\not\in\Scal(x^*)$ and $k\geq k_1$ with
    \begin{equation}\label{def:k1}
    k_1 = 
    \begin{cases}
       \Ocal\left(\frac{\log \delta^*}{\log \max\left\{\left[\theta^{\min\{\rho_{\min},\rho^*\}}\right],[\Psi\theta]^{2\iota}\right\}}\right) & \text{if Strategy~\ref{s1} is used,}\\ 
      \Ocal\left(\frac{\log {\delta^*}}{\log\left(\max\{\omega^{\rho_{\min}},\theta^{\rho^*}\omega^{2\iota}\}\right)}\right)&\text{if Strategy~\ref{s2} is used and $\omega<\theta$,}\\
      \Ocal\left(\frac{\log {\delta^*}}{\log\left(\max\{\omega^{\min\{\rho_{\min}, \rho^*\}} \omega^{2\iota}\}\right)}\right) &\text{if Strategy~\ref{s2} is used and $\omega>\theta$,}\\
     \Ocal(C_{\delta^*})&\text{if Strategy~\ref{s2} is used and $\omega=\theta$.}
    \end{cases}
    \end{equation}
    To prove this, note that if $\Scal(x^*) = [\ngrp]$ (i.e., if no $i\notin\Scal(x^*)$ exists), then the claim trivially holds for all $k \geq 0$, which agrees with the definition of $k_1$ since $\delta^* = 1$ and $C_{\delta^*} = 0$ in this case.   Therefore, for the remainder of the proof, we assume that $\Scal(x^*) \subsetneqq [\ngrp]$.  It  follows from Lemma~\ref{lemma:utltimate} that
    \begin{align*}
    &\dist(\yhat_{k+1}, \Ycalhat(x^*,\alpha^*)) + \epsilon_{k-1}^{\iota} \\
    &= 
    \begin{cases}
    \Ocal\big(\big[\theta^{\min\{\rho_{\min}, \rho^*\}}\big]^k\big) + 
    \Ocal\left(\left([\Psi\theta]^{2\iota}\right)^{k}\right) & \text{if Strategy~\ref{s1} is used,}\\ 
    \Ocal\left(\big[\max\{\omega^{\rho_{\min}},\theta^{\rho^*}\}\big]^k\right) +\Ocal\left(\left(\omega^{2\iota}\right)^{k}\right)&\text{if Strategy~\ref{s2} is used and $\omega<\theta$,}\\
    \Ocal\left([\omega^{\min\{\rho_{\min}, \rho^*\}}]^k\right) +\Ocal\left(\left(\omega^{2\iota}\right)^{k}\right)&\text{if Strategy~\ref{s2} is used and $\omega>\theta$,}\\
    \Ocal\left((k\theta^k)^{\min\{\rho_{\min}, \rho^*\}}\right) +\Ocal\left(\left(\omega^{2\iota}\right)^{k}\right)&\text{if Strategy~\ref{s2} is used and $\omega=\theta$.}
    \end{cases}
    \end{align*}
    Thus, there exists a constant $k_1$ satisfying~\eqref{def:k1} that makes the right-hand side of the previous inequality less than $\delta^*$, and therefore both $\epsilon_{k-1}^{\iota} < \delta^*$ and $\dist(\yhat_{k+1}, \Ycalhat(x^*,\alpha^*))  < \delta^* -  \epsilon_{k-1}^{\iota}$ must hold for all $k \geq k_1$.  This second inequality implies that, for each $k\geq k_1$, there exists $p^*_k \in \Ycalhat(x^*,\alpha^*)$ such that $\|\yhat_{k+1}- p_k^*\|  < \delta^* -  \epsilon_{k-1}^{\iota}$. 
    This inequality and Assumption~\ref{ass:nd} together 
    imply
    $$
    \| [\yhat_{k+1}]_{\Mcal(i)}\| 
    \leq \|[\yhat_{k+1}]_{\Mcal(i)} - [p_k^*]_{\Mcal(i)}\| + \|[p_k^*]_{\Mcal(i)}\| 
    < \delta^*-\epsilon_k^{\iota} + [\lambda]_i - \delta^*
    = [\lambda]_i - \epsilon_{k-1}^{\iota}
    $$
    for all $i\not\in\Scal(x^*)$ and $k\geq k_1$, which proves the first part of the claim.

To prove the second part of the claim, first note from how $\yhat_{k+1}$ is computed in Algorithm~\ref{alg:dual-gradient-descent} that there exists an integer $t_k>0$ such that $\yhat_{k+1} = \yhat_{k,t_k}$. Combining this fact with the first part of the claim that we just proved, we find that $\norm{[\yhat_{k+1}]_{\Mcal(i)}} =\norm{[\yhat_{k, t_k}]_{\Mcal(i)}} <[\lambda]_i-\epsilon_{k-1}^{\iota}$ for all $i\notin\Scal(x^*)$ and $k\geq k_1$.  Using this inequality and Line~\ref{line:proj-primal} of Algorithm~\ref{alg:dual-gradient-descent}, it follows that $[\xhat_{k, t_k}]_{g_i} = 0$ for all $i\notin\Scal(x^*)$ and $k \geq k_1$. Thus,  for all $i\notin\Scal(x^*)$ and $k \geq k_1$, we have $[\xhat_{k+1}]_{g_i} = [\xhat_{k, t_k}]_{g_i} = 0$. This result and Assumption~\ref{ass:alpha} imply that $[x_{k+1}]_{g_i} = 0$ for all $i\not\in\Scal(x^*)$ and $k\geq k_1$ since $x_{k+1}=\xhat_{k+1}$.

\medskip
\noindent
\textbf{Claim 2.} 
 We claim that $[x_{k+1}]_{g_i} \neq 0$ for all  $i\in\Scal(x^*)$ and $k\geq k_2$ with
    \begin{equation}\label{def:k2}
          k_2=
        \begin{cases}
            \Ocal\left(\frac{\log \Theta}{\log \theta}\right) &\text{if (Strategy~\ref{s1} is used) or (Strategy~\ref{s2} is used and $\omega<\theta$),}\\  
            \Ocal\left(\frac{\log \Theta}{\log\omega}\right) &\text{if Strategy~\ref{s2} is used and $\omega>\theta$,}\\
            \Ocal(C_{\Theta}) &\text{if Strategy~\ref{s2} is used and $\omega=\theta$.}
        \end{cases}  
    \end{equation}
To prove this, note that if $\Scal(x^*) = \emptyset$, then the claim trivially holds for all $k \geq 0$, which agrees with the definition of $k_2$ since $\Theta = 1$ and $C_{\Theta} = 0$ in this case.   Thus, for the remainder of the proof, we assume that $\Scal(x^*) \neq \emptyset$.  It follows from Lemma~\ref{lemma:linear-convergence} that
    \begin{align*}
        &\norm{x_{k+1}-x^*} \\
        &=
        \begin{cases}
        \Ocal\left(\theta^{k+1}\right) & \text{if (Strategy~\ref{s1} is used) or (Strategy~\ref{s2} is used and $\omega<\theta$)},\\ 
        \Ocal\left(\omega^{k+1}\right)    &\text{if Strategy~\ref{s2} is used and $\omega>\theta$ hold,}\\
        \Ocal\left((k+1)\theta^{k+1}\right)    &\text{if Strategy~\ref{s2} is used and $\omega=\theta$ hold.}
        \end{cases}
    \end{align*}
    Thus, there exists a constant $k_2$ satisfying \eqref{def:k2} that makes the right-hand side of the above inequality less that $\Theta$.
    The fact that $\norm{x_{k+1}-x^*}< \Theta$ for $k\geq k_2$ implies $\norm{[x_{k+1}]_{g_i} - [x^*]_{g_i}} < \Theta$ for all $i\in\Scal(x^*)$ and $k\geq k_2$. This further suggests $[x_{k+1}]_{g_i}\neq 0$ for all $i\in\Scal(x^*)$ and $k \geq k_2$.

Parts (i) and (ii) follow since $\Scal(x_{k+1})=\Scal(x^*)$ for all $k \geq \max\{k_1,k_2\}$.
\end{proof}

We remark that if we do not make any assumption on  the rate of convergence of $\{\epsk\}$ to zero (e.g., do not assume that a strategy such as  Strategy~\ref{s1} or Strategy~\ref{s2} is used), then a small modification of the above analysis establishes that support identification still occurs.  However, we are no longer able to give a bound on the maximum number of iterations before support identification will occur.

\section{Numerical Results}\label{sec.numerical}

In this section, we discuss our numerical tests.  In Section~\ref{sec:problem} we discuss the formulation of the test problem, in Section~\ref{sec:implementation} we discuss the details of our implementations, and in Section~\ref{sec.results} we discuss the tests performed and the results of those tests.

\subsection{Problem Formulation} \label{sec:problem}

We focus on the learning task of binary classification.  To this end, we consider problem~\eqref{prob.main} where the function $f$ is the binary logistic loss defined as
$$
f(x) = \tfrac{1}{N} \sum_{i=1}^{N} \log \left(1+e^{-y_{i} x^{T} d_{i}}\right),
$$
where $d_i\in\R{n}$ is the $i$th data point, $N$ is the number of data points in the data set, and $y_i\in \{-1, 1\}$ is the class label for the $i$th data point. Such an approach is commonly used in machine learning applications.

Data sets for the logistic regression problem were obtained from the LIBSVM repository.\footnote{https://www.csie.ntu.edu.tw/cjlin/libsvmtools/datasets} We excluded all multi-class (greater than two) classification instances, instances with less than $100$ features, and all data sets that were too large ($\geq$ 8GB) to be loaded into memory. For the adult data (a1a--a9a) and webpage data (w1a--w8a), we used only the largest instances, namely a9a and w8a. This left us with the final set of $11$ data sets found in Table~\ref{tab:test-db}. If the source of the data indicated that a data set was scaled, we used it without modification.  When the website did not indicate that scaling for a data set was used, we scaled each column of the feature data (i.e., feature-wise scaling) into the range $[-1,1]$ by dividing its entries by the largest entry in absolute value. 

We consider the group regularizer $r$ defined in~\eqref{eq:natOverlap}. As for defining the groups, we use a similar strategy as that used in~\cite{villa2014proximal} to define different overlapping groups, where the overlapping of groups is controlled by parameters $\ratio{} \in(0,1)$ and $\grpsize{} \in\N{}$. The product of $\ratio{}$ and $\grpsize{}$ determines the number of elements in each group that overlap with its neighboring groups. For example, if  $n = 13$, $\ratio{} = 0.2$, $\grpsize{} =5$, then the groups are $g_1=\{1,2,3,4,5\}$, $g_2=\{5,6,7,8,9\}$, and $g_3=\{9,10,11,12,13\}$, which have overlap of size $1 = \ratio{} \cdot \grpsize{}$. In addition, we consider different solution sparsity levels, which are achieved by adjusting the parameters $\{[\lambda]_{i}\}_{i=1}^{\ngrp}$. Specifically, for each $i\in[\ngrp]$, we define $[\lambda]_i= \Lambda\sqrt{|g_i|}$ for some $\Lambda < \Lambda_{\min}$, where $\Lambda_{\min}$ is the minimum positive number such that the solution to the problem with $[\lambda]_i = \Lambda_{\min}\sqrt{|g_i|}$ for all $i\in[\ngrp]$ is $x=0$. Since $\Lambda_{\min}$ cannot be computed analytically, we test a range of values and choose the smallest one that gives the zero solution; as a starting guess for $\Lambda_{\min}$, we use the one derived from the non-overlapping \grplone{} case as discussed in~\cite[equation (23)]{Yang2015}. For the tests described later in this section, we make specific  choices for the parameters $\ratio{}$, $\grpsize{}$, and $\Lambda$. 

\begin{table}[thbp]
\footnotesize
\centering
\caption{The first column (data set) gives the name of the data set.  The second column (N) and third column (n) indicate the number of data points and number of features (equivalently, the number of optimization variables), respectively.  The fourth column (scale) provides the feature-wise scaling used: each feature is either scaled into the given interval or scaled to have mean zero ($\mu = 0$) and variance one ($\sigma^2=1$).  The fifth column (who) indicates whether the data set came pre-scaled from the LIBSVM website (``website"), or whether it did not come pre-scaled and we scaled it (``us") as described in Section~\ref{sec:problem}.}
\smallskip
{\begin{tabular}{|c|cc|cc|}
\hline
data set           & N      & n    & scale               & who      \\ \hline
a9a                & 32561  & 123  & {[}0,1{]}           & website    \\
colon-cancer       & 62     & 2000 & $(\mu,\sigma^2)= (0,1)$ & website     \\
duke breast-cancer & 44     & 7192 & $(\mu,\sigma^2)= (0,1)$ & website     \\
gisette            & 6000   & 5000 & {[}-1,1{]}          & website     \\
leukemia           & 38     & 7129 & $(\mu,\sigma^2)= (0,1)$ & website     \\
madelon            & 2000   & 500  & {[}-1,1{]}          & us          \\
mushrooms          & 8124   & 112  & {[}0,1{]}           & website     \\
news20.binary      & 19996  & 1355191 & {[}0,1{]}           & website     \\
rcv1.binary        & 20242  & 47236   & {[}0,1{]}           & website     \\
real-sim           & 72309  & 20958   & {[}0,1{]}           & website     \\
w8a                & 49749  & 300  & {[}0,1{]}           & website    \\
\hline
\end{tabular}}
\label{tab:test-db}
\end{table}

\subsection{Implementation details}\label{sec:implementation}  
 
In Algorithm~\ref{alg:main}, the values of the input parameters that we use are  $\xi=0.5, \eta=10^{-3}$ and $\zeta=0.8$, and the remaining parameters $\gamma_1$ and $\gamma_2$ will be described later since they are tuned. We set $x_0 \gets 0$ and $\alpha_0 \gets 1$. For both $\opt = \optone{}$ and $\opt = \opttwo{}$, Algorithm~\ref{alg:main} decreases the value of the PG parameter for the next iteration using a multiplicative factor when $\flagpgk = \decalpha$.  In practice, this strategy might be conservative as the PG parameter can become relatively small after encountering several consecutive iterations that trigger a decrease in the PG parameter. Instead, in our implementation, we update the PG parameter at the end of each iteration of Algorithm~\ref{alg:main} following an idea from  \cite{becker2011templates}. Specifically, for the $k$th iteration, if the inequality in Line~\ref{line:armijo-pg} is not satisfied with $j=0$ (meaning that no backtracking is performed), we increase the PG parameter as $\alpha_{k+1}\gets 1.1 \alpha_k$; otherwise, we decrease the PG parameter as $\alpha_{k+1} \gets \zeta \alpha_k$. While  this PG parameter update strategy works better than the basic strategy in Algorithm~\ref{alg:main}, it is not covered by our analysis in Section~\ref{sec.analysis}. However, a simple modification is to allow the PG parameter to increase a finite number of times, which would be covered by our analysis with a larger constant in the complexity result. We compare the two adaptive termination conditions $\optone{}$ and $\opttwo{}$ with the absolute termination condition from \cite{schmidt2011}, i.e., setting $\epsilon_{k}=\frac{\const{}}{k^{3}}$ for some positive number $\const$ (its value will be given later since it is tuned), which we will refer to as $\optthree{}$. For $\optthree{}$, since there is no guarantee that $\xhat_{k+1} - x_k$ is a descent direction for $f+r$, we use a strategy considered in \cite{schmidt2011} where the PG parameter is initially set to $\alpha_0$ and every time the inequality $f(\xhat_{k+1})\leq f(x_k)+ \nabla f(x_k)^T (\xhat_{k+1}-x_k) + \frac{1}{\alpha_k}\norm{\xhat_{k+1}-x_k}^2$ is violated by the approximate solution $\xhat_{k+1}$,  the PG parameter is decreased by setting $\alpha_{k+1} \gets \zeta \alpha_k$ and we set $x_{k+1} \gets x_k$. 

Let us discuss the four termination conditions that we use in Algorithm~\ref{alg:main}.  

\begin{enumerate}
\item \textbf{Maximum time.} Algorithm~\ref{alg:main} is terminated if $12$ hours is reached.
\item \textbf{Maximum iterations.}  Algorithm~\ref{alg:main} is terminated if  $10^6$ iterations is reached.
\item \textbf{Approximate solution found.} Algorithm~\ref{alg:main} is terminated if 
\begin{align}\label{eq:algo1term}
    \frac{\norm{\xhat_{k+1} - x_k} + \sqrt{2\alpha_k\big(\phi(\xhat_{k+1};x_k,\alpha_k)  -  \phid(\yhat_{k+1};x_k,\alpha_k)\big)}}{\min(1, \alpha_k)}
    \leq \epsilon_{\text{tol}}:= 10^{-5},
\end{align}
which we proceed to justify.  Note that strong duality implies that the pair $(\xhat_{k+1}, \yhat_{k+1})$ returned by Algorithm~\ref{alg:dual-gradient-descent} satisfies $\phi(\xhat_{k+1};x_k,\alpha_k) - \phi(\T(x_k, \alpha_k);x_k,\alpha_k) \leq \phi(\xhat_{k+1};x_k,\alpha_k) - \phid(\yhat_{k+1};x_k,\alpha_k)$ so that $\xhat_{k+1}$ is by definition (see \eqref{def:ipg-update}) an $\epsilonbar$-PG update with $\epsilonbar = \phi(\xhat_{k+1};x_k,\alpha_k)  - \phid(\yhat_{k+1};x_k,\alpha_k)$. It follows from this fact, the reverse triangular inequality, and Lemma~\ref{lemma:chibound} that 
$$
\begin{aligned}
    \norm{\T(x_k,\alpha_k)-x_k} - \norm{\xhat_{k+1}-x_k} 
    &\leq \norm{\T(x_k,\alpha_k) - x_k + x_k - \xhat_{k+1}} \\
    &= \norm{\T(x_k,\alpha_k) - \xhat_{k+1}} \leq \sqrt{2\alpha_k\epsilonbar}.
\end{aligned}
$$
Combining this with the definitions of $\chipgk$ and $\epsilonbar$, and \eqref{eq:algo1term}, it follows that
$$
\chipgk  = \frac{\norm{\T(x_k,\alpha_k)-x_k}}{\alpha_k}
\leq \frac{\norm{\xhat_{k+1}-x_k}}{\alpha_k} + \frac{\sqrt{2\alpha_k\epsilonbar}}{\alpha_k}
\leq \frac{\norm{\xhat_{k+1} - x_k} + \sqrt{2\alpha_k\epsilonbar}}{\min(1,\alpha_k)}
\leq \epsilon_{\text{tol}},
$$
i.e., $\xhat_{k}$ is an approximate solution since $\chipgk \leq \epsilon_{\text{tol}}$.  This justifies~\eqref{eq:algo1term}. 
\item \textbf{Numerical difficulties.} Algorithm~\ref{alg:main} is terminated if numerical difficulties are encountered when the subproblem solver Algorithm~\ref{alg:dual-gradient-descent} is called. In particular, while running our initial tests, we occasionally observed $\epsilon_{k}$ to be on the order of $10^{-12}$ when the sequence $\{x_{k}\}$ approached a stationary point.  In this case, Algorithm~\ref{alg:dual-gradient-descent} was sometimes unable to return an approximate primal-dual pair $(\xhat_{k+1}, \yhat_{k+1})$ that satisfied its approximate optimality conditions before reaching its iteration limit of 5000 iterations; the resulting $\xhat_{k+1}$ in this scenario was often dense. To address this numerical challenge, when Algorithm~\ref{alg:dual-gradient-descent} is terminated due to reaching its iteration limit, a reference point is used to (potentially) set additional groups of $\xhat_{k+1}$ to zero. Concretely,
we first define
$$
\text{ref}(k) := \max\{ \kbar : \kbar \leq k \ \text{and Algorithm~\ref{alg:dual-gradient-descent} terminated successfully in iteration $k$} \},
$$
which allows us to define, for each $i\in[\ngrp]$, the ``corrected" iterate
$$
[\xhat_{k+1}^{\text{corrected}}]_{g_i} =
\begin{cases}
[\hat x_{k+1}]_{g_i} & \text{ if } [x_{\text{ref}(k)}]_{g_i} \neq 0, \\
0 & \text{ if } [x_{\text{ref}(k)}]_{g_i} = 0.
\end{cases}
$$
We use the corrected iterate instead of $\xhat_{k+1}$ if it has a better objective value:
$$
\xhat_{k+1} \gets 
\begin{cases}
\xhat_{k+1}^{\text{corrected}} & \text{if $\phi\big(\xhat_{k+1}^{\text{corrected}}; x_k, \alpha_k\big)\leq \phi\big(\xhat_{k+1}; x_k, \alpha_k\big)$,} \\
 \xhat_{k+1} & \text{otherwise}.
\end{cases}
$$
If two correction steps are needed, we terminate for numerical difficulties.
\end{enumerate}

We now discuss Algorithm~\ref{alg:dual-gradient-descent}. The values for the input parameters are $\xi_2 = 0.5$ and $\eta_2 = 10^{-3}$.  Algorithm~\ref{alg:dual-gradient-descent} is terminated when the inequality in Line~\ref{line:pgd-term11} holds, the inequality in Line~\ref{line:pgd-term21} holds, or $5000$ iterations are performed. In the last case, a ``correction" step is attempted as described above. For $k = 0$, we set $\sigma \gets 1$ and $\yhat_{0,0} \gets 0$.  For all future iterations $k \geq 1$, we set $\sigma \gets \sigma_{k-1,t}$ and $\yhat_{k,0} \gets \yhat_{k-1,t}$, where $t$ is the final iteration of Algorithm~\ref{alg:dual-gradient-descent} when called during iteration $k-1$. Finally, when $ \optthree{}$ is considered (see the discussion above in this section), Algorithm~\ref{alg:dual-gradient-descent} is terminated when the inequality in Line~\ref{line:pgd-term11} or Line~\ref{line:pgd-term21} (with the right-hand sides replaced by $\const{}/k^3$) holds since this implies that $\xhat_{k+1}\in\Tepsk(x_k,\alpha_k)$ with $\epsilon_k = \const{}/k^3$, as needed.     

\subsection{Tests and results} \label{sec.results}

We have a Python implementation that is available upon request. All experiments were conducted on the Computational Optimization Research Laboratory (COR@L) cluster at Lehigh University with an AMD Opteron Processor 6128 2.0 GHz CPU.

\subsubsection{Comparison of the different termination conditions}\label{sec:compare-tc} 

We compare the performance of the three algorithm variants $\optone{}$, $\opttwo{}$, and $\optthree{}$. For this test, we generate overlapping groups as described in Section~\ref{sec:problem} using values $\ratio \in \{0.1, 0.2, 0.3\}$, $\grpsize{} \in \{10,100\}$, and $\Lambda \in \{ 0.1\Lambda_{\min}, 0.01\Lambda_{\min}\}$, and the $11$ data sets given in Table~\ref{tab:test-db}.  In total, this gives $132$ test instances comprised of $12$ different group constructions for each of the $11$ data sets. 

We tune values $\gamma_1$ for $\optone{}$, $\gamma_2$ for $\opttwo{}$, and $\const{}$ for $\optthree{}$ on the $72$ test instances that correspond to the smaller data sets a9a, colon-cancer, duke breast-cancer, leukemia, mushrooms, and w8a. We searched for best values for $\gamma_1$ and $\gamma_2$ from the set $\{0.1, 0.2, 0.3, 0.4, 0.5\}$, and a best value for $\const{}$ from the set $\{10^i\}_{i=0}^{4}$ (these sets were chosen based on preliminary testing). When deciding the best values for $\gamma_1$, $\gamma_2$ and $\const{}$, we considered an algorithms performance on the 72 problem instances in terms of CPU time (averaged over three runs to account for randomness in the timings). This procedure resulted in the tuned values $\gamma_1 = 0.2$, $\gamma_2 = 0.5$, and $\const = 1000$.  Empirically, the performances of $\optone{}$ and $\opttwo{}$, which use relative criteria each iteration, were less sensitive to parameter tuning when compared to $\optthree{}$, which uses an absolute criterion each iteration.

Using the above tuned parameters, we tested $\optone{}$, $\opttwo{}$, and $\optthree{}$ across the entire $132$ problem instances.  A summary of the termination statuses (see Section~\ref{sec:implementation}) returned by the three algorithms is found in Table~\ref{tab:full-run}.  These results show that all three variants successful find an approximate solution to the majority of problem instances.     
We note that all three variants terminated due to the maximum iteration limit on all 12 instances of the data set madelon, and that all instances for which an algorithm reached the maximum time limit was for the data set news20.binary.

\begin{table}[!ht]
\centering
\caption{Termination status summary for the algorithm variants $\optone{}$, $\opttwo{}$, and $\optthree{}$ on the $132$ test instances described in Section~\ref{sec:compare-tc}.  See Section~\ref{sec:implementation} for the precise meaning of each termination condition.}
\smallskip
{\begin{tabular}{|c|c|c|c|c|}
\hline
                      & approximate    & maximum         &  maximum    & numerical    \\
                      & solution found & iteration limit &  time limit & difficulties \\ 
\hline
$\optone{}$ & 108                             & 16                     & 7                 & 1                 \\ 
$\opttwo{}$ & 107                             & 15                     & 8                 & 2                 \\ 
$\optthree{}$ & 107                             & 16                     & 9                 & 0                 \\ 
\hline
\end{tabular}}
\label{tab:full-run}
\end{table}

Since Table~\ref{tab:full-run} verifies that all three algorithm variants are relatively robust, we now consider various performance metrics in detail.  First we  compare the computing times.

\medskip
\noindent\textbf{CPU time comparison.}
Figure~\ref{fig:ppflogitnat} contains performance profiles based on~\cite{morales2002numerical} for comparing the computing times of two algorithms on a collection of problem instances. For a plot that compares algorithms ${\tt option\_i}$ and ${\tt option\_j}$ for $\{i,j\}\subset\{1,2,3\}$ with $i \neq j$, each bar corresponds to a problem instance, with the height of the bar given by 
\begin{equation}\label{measure}
-\log_2
\left(
\frac{\text{time required by }{\tt option\_i}}{\text{time required by }{\tt option\_j}}
\right). 
\end{equation}
Thus, an upward pointing (blue) bar indicates that ${\tt option\_i}$ took less time than ${\tt option\_j}$ to solve that test problem.  In contrast, a downward pointing (red) bar means that ${\tt option\_i}$ took more time than ${\tt option\_j}$ to solve that test problem. In either case, the size of the bar indicates the magnitude of the outperformance. If both algorithms fail to solve a problem instance, that instance is not included in the plot.  For problem instances that are successfully solved by only one of the two algorithms, the height of the bar is set according to the following procedure. For each $p\in\{1,2,\cdots,132\}$ and $i\in\{1,2,3\}$, let $t_{p,i}$ denote the time consumed by ${\tt option\_i}$ on the $p$th problem and $\Scal_{i,j}$ denote the set of all problem instances that are successfully solved by both ${\tt option\_i}$ and  ${\tt option\_j}$.  Then, if exactly one of ${\tt option\_i}$ and ${\tt option\_j}$ fails on problem instance $p$, the height of the associated bar is set as $1.5\max_{j\in\Scal_{i,j}}\{|\log_2(t_{p,i}/t_{p,j})|\}$ pointing in the direction of the algorithm that successfully solved the problem. For each given plot, a single performance metric can be computed for each of the two competing algorithms by computing the areas of their respective bars (the width of each bar is one so the the area of each bar is equal to its height); theses areas are indicated in the legend of each plot.  As shown in Fig~\ref{fig:ppflogitnat}, the adaptive criteria used by $\optone{}$ and $\opttwo{}$ outperforms the absolute criterion used in $\optthree{}$. 

\begin{figure}[!ht]
  \begin{center}
    \includegraphics[width=0.32\textwidth]{./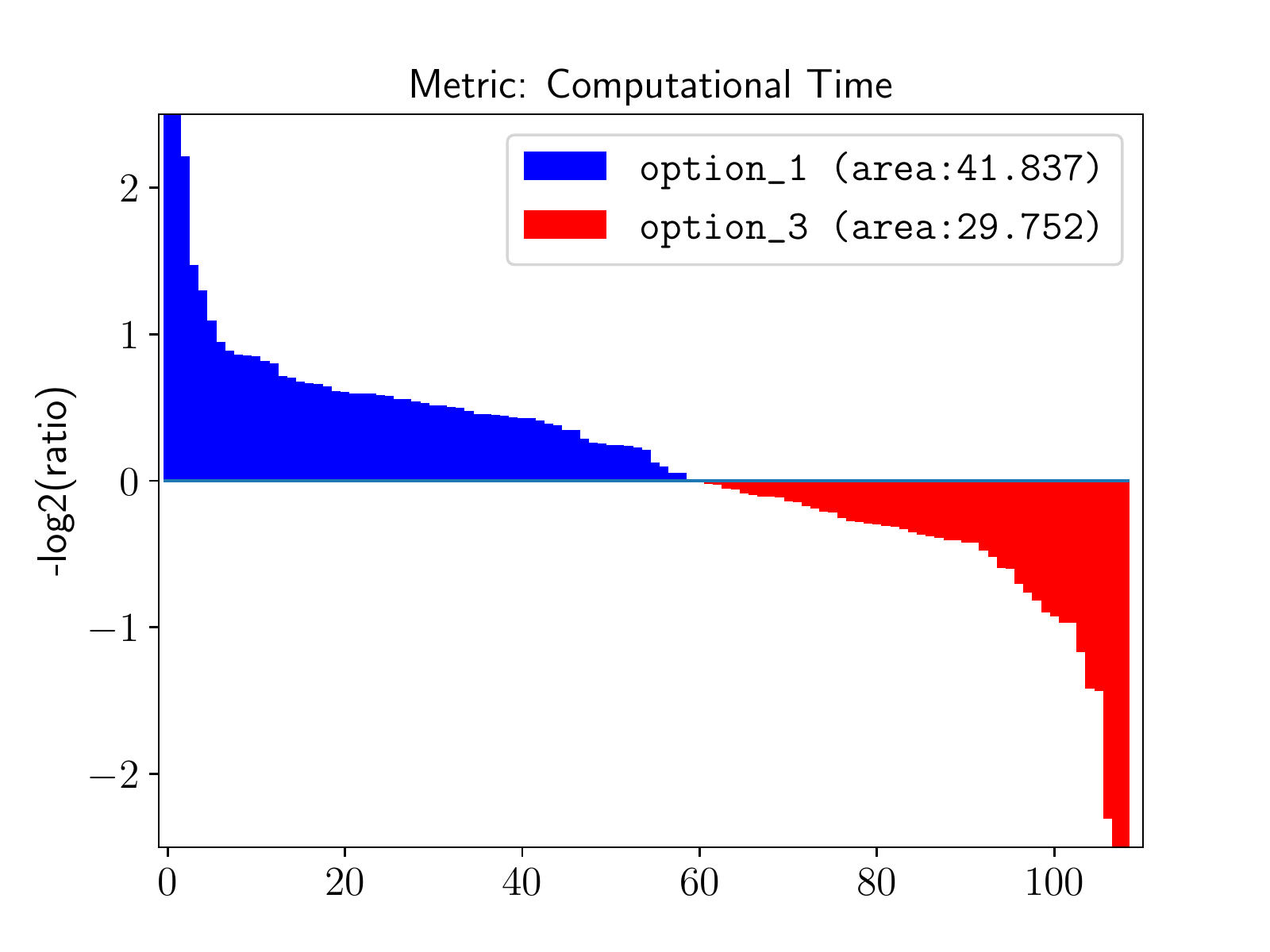}
    \includegraphics[width=0.32\textwidth]{./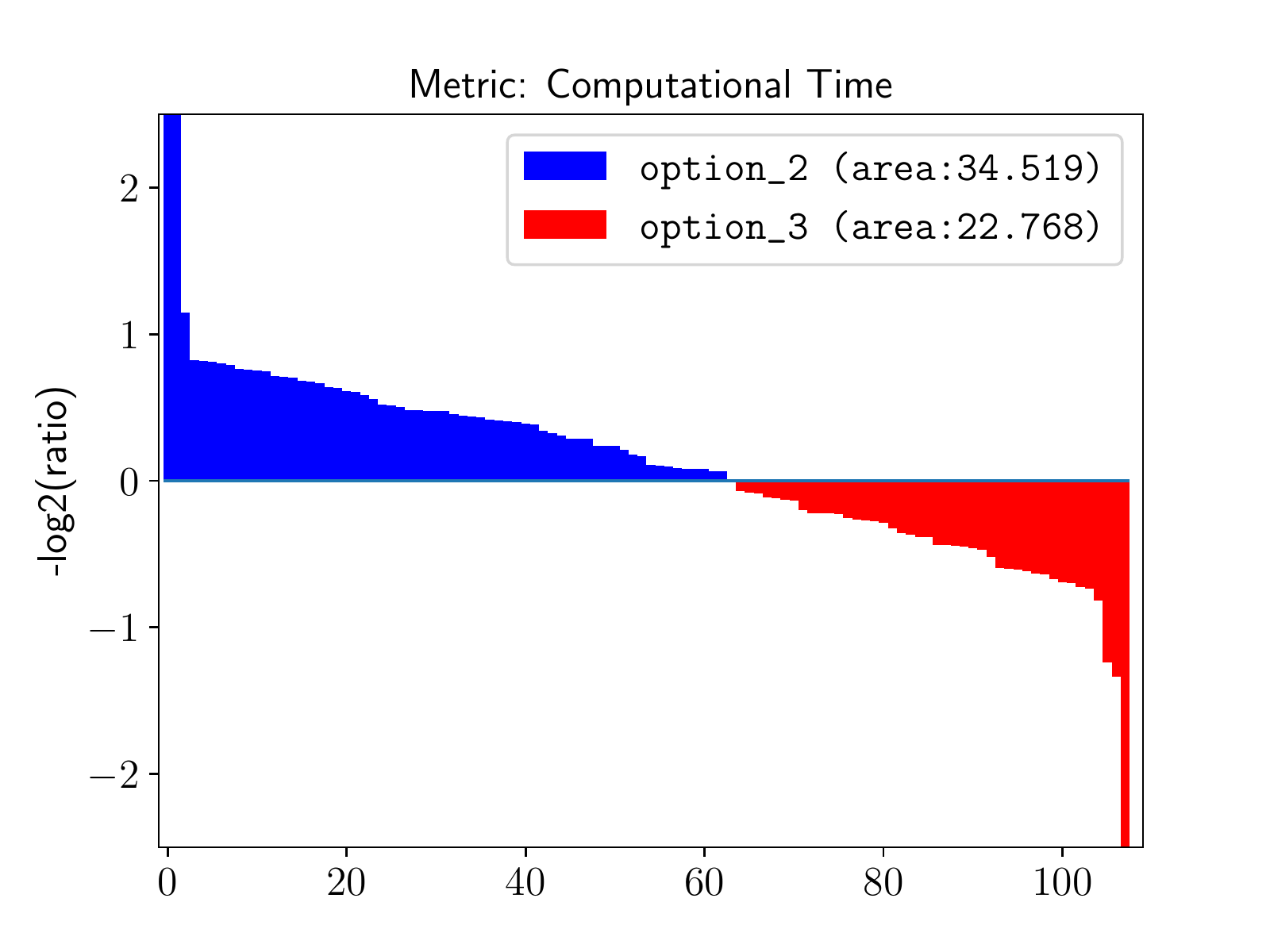}
    \includegraphics[width=0.32\textwidth]{./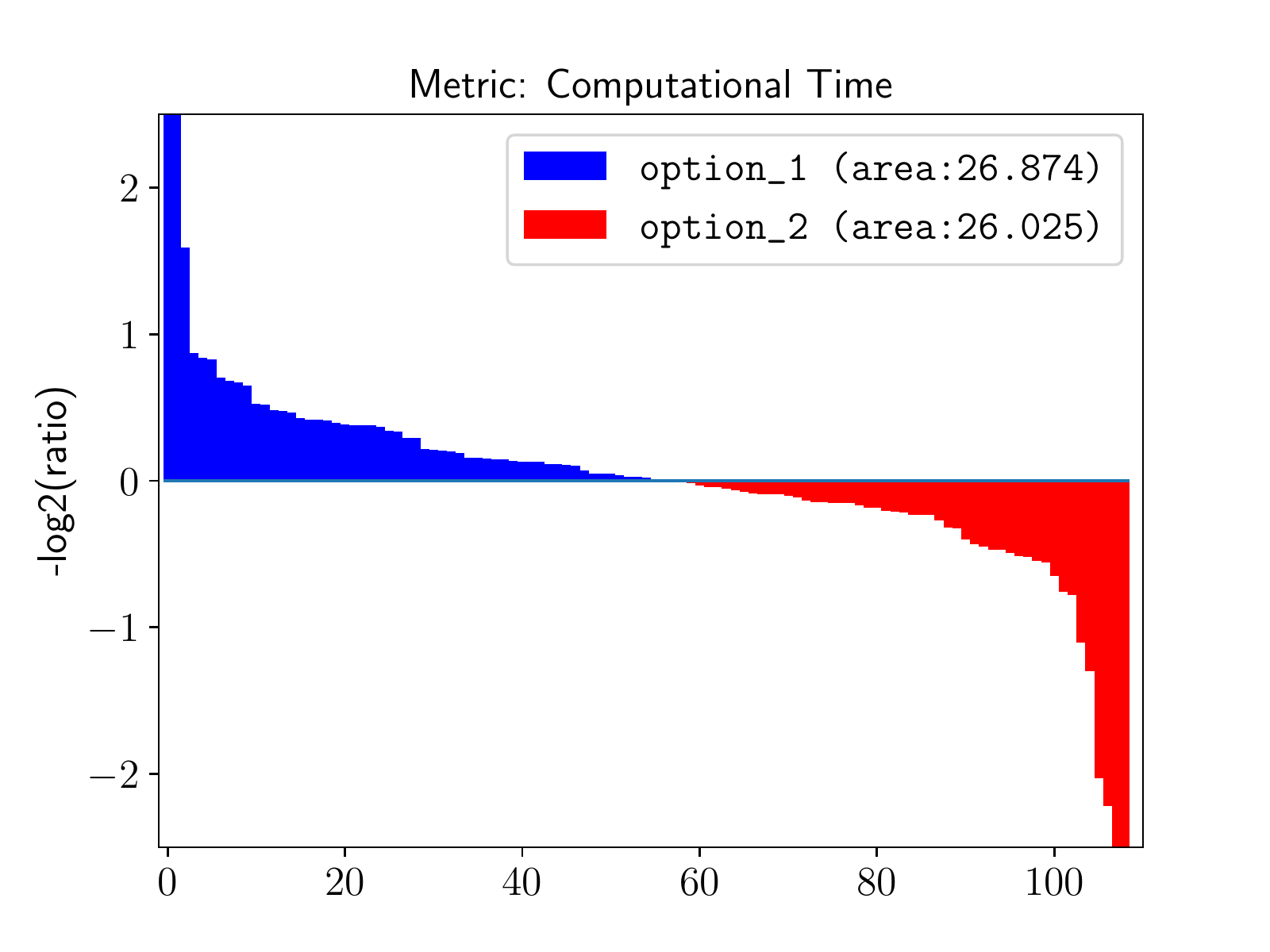}
  \end{center}
  \vspace*{-0.5cm}
  \caption{A performance profile for CPU time (seconds). 
  In each plot, we exclude problem instances for which both algorithms fail.}
  \label{fig:ppflogitnat}
\end{figure}

\noindent
\textbf{Final objective value and sparsity.} 
For problem instance $p\in\{1,2,\dots,132\}$ and algorithm ${\tt option\_i}$ for $i\in\{1,2,3\}$, we let $F_{p,i}$ denote the final objective value returned by ${\tt option\_i}$ when solving problem $p$. If $F_{i} - F_{j} < -10^{-6}$, then we say ${\tt  option\_i}$ obtained a ``better"  objective value compared to ${\tt option\_j}$; if  $F_{i} - F_{j} > 10^{-6}$, then we say ${\tt option\_i}$ obtained a ``worse" objective value compared to ${\tt option\_j}$; and if $|F_{i} - F_{j}|\leq 10^{-6}$, then we consider them to have performed the ``same".

One must also consider the sparsity of the returned solutions. If the solution returned by ${\tt option\_i}$ is sparser than the solution returned by ${\tt option\_j}$, we say that ${\tt option\_i}$ performed ``better" than ${\tt option\_j}$; if the solution returned by ${\tt option\_i}$ is denser than the solution returned by ${\tt option\_j}$, we say that ${\tt option\_i}$ performed ``worse" than ${\tt option\_j}$; if the solutions returned by both algorithms have the same level of sparsity, we say that the two algorithms performed the ``same".

The results from our tests can be found in Table~\ref{tab:sparsity-and-F}. From these results, we may first conclude that the adaptive termination criteria ($\optone{}$ and $\opttwo$) outperform the absolute termination criterion ($\optthree$) in achieving a lower objective value. Second, the algorithms based on an adaptive termination criteria ($\optone{}$ and $\opttwo{}$) slightly outperform the algorithm using an absolute termination criterion ($\optthree{}$) in terms of obtaining sparser solutions.  Third, $\optone{}$ slightly outperforms $\opttwo{}$ in terms of achieving a better final objective value but at the expense of being slightly outperformed by $\opttwo{}$ in terms of final solution sparsity. 
 
\begin{table}[!ht]
\centering
\setlength\tabcolsep{3pt}
\caption{A comparison of solution sparsity and final objective value for $\optone{}$, $\opttwo{}$, and $\optthree{}$ over the entire $132$ problem instances. For example, the $6$ in the first row (``$\optone{}$\; versus \;$\optthree{}$" ) indicates that $\optone{}$ returned a sparser solution (i.e., a ``better" solution) than $\optthree{}$ on 6 problem instances.  Similarly, the $24$ in that same row means that $\optone{}$ returned a lower objective function value (i.e., a ``better" final objective value) than $\optthree{}$ on $24$ problem instances.}
\medskip
{\begin{tabular}{|l|ccc|ccc|}
\hline
                              & \multicolumn{3}{c|}{sparsity}                                                      & \multicolumn{3}{c|}{objective value}                                                          \\ 
\hline
                              & \multicolumn{1}{l}{better} & \multicolumn{1}{l}{same} & \multicolumn{1}{l|}{worse} & \multicolumn{1}{l}{better} & \multicolumn{1}{l}{same} & \multicolumn{1}{l|}{worse} \\ 
\hline 
$\optone{}$\; versus \;$\optthree{}$ & 6                          & 121                      & 5                         & 24                         & 103                      & 5                         \\
$\opttwo{}$\; versus \;$\optthree{}$   & 8                          & 118                      & 6                         & 23                         & 103                      & 6                         \\
$\optone{}$\; versus \;$\opttwo{}$\; & 4                          & 118                      & 10                        & 10                         & 115                      & 7                         \\ 
\hline
\end{tabular}}
\label{tab:sparsity-and-F}
\end{table}

\subsubsection{Subproblem solver comparison: Algorithm~\ref{alg:dual-gradient-descent} versus projected gradient ascent} \label{sec:pgd}

We now explore the support identification property of our subproblem solver Algorithm~\ref{alg:dual-gradient-descent}.  It is clear from the previous section that Algorithm~\ref{alg:main} with any $\opt\in \{\optone{},\opttwo{},\optthree{}\}$ is returns sparse solutions when Algorithm~\ref{alg:dual-gradient-descent} is used as the subproblem solver.  Here, we compare the sparsity of the solutions produced by Algorithm~\ref{alg:main} when our proposed subproblem solver Algorithm~\ref{alg:dual-gradient-descent} is replaced by a projected gradient ascent (PGA) method.  The results of our tests are in Table~\ref{tab:subsolver}; we only compare PGA to $\optone{}$ since $\optone{}$ and $\opttwo{}$ perform similarly. 

For each data set in Table~\ref{tab:test-db} we created a single test instance by first setting $\ratio{} \gets 0.1$ and $\grpsize{} \gets 10$ (see Section~\ref{sec:problem}).  To ensure that a sparse solution existed for each problem instance, we selected $\Lambda$ (see Section~\ref{sec:problem}) differently for each data set as indicated in the  second column of Table~\ref{tab:subsolver}. Note that the three largest data sets from Table~\ref{tab:test-db} are excluded from Table~\ref{tab:subsolver} because the PGA subproblem solver was not able to achieve any reasonable solution within the time limit (see Section~\ref{sec:implementation}).

Table~\ref{tab:subsolver} indicates that when Algorithm~\ref{alg:main} uses either Algorithm~\ref{alg:dual-gradient-descent} or the PGA method as the subproblem solver the same objective value (up to 6 digits of accuracy) is achieved.  However, while Algorithm~\ref{alg:dual-gradient-descent} consistently finds sparse solutions, the subproblem solver PGA consistently finds dense solutions. Although it is possible to enhance the PGA method through a post-processing step that forces blocks of variables with small magnitude to zero, this would require the careful tuning of a threshold parameter.  Our method does not require any such post-processing to achieve sparse solutions.  Although not the focus of this section, we  observed that Algorithm~\ref{alg:main} converged in fewer iterations when Algorithm~\ref{alg:dual-gradient-descent} is used as the subproblem solver compared to when PGA is used.  It is clear that using Algorithm~\ref{alg:dual-gradient-descent} is crucial to obtaining sparse solutions. 

\begin{table}[!ht]
\centering
\setlength\tabcolsep{5pt}
\caption{The test results for  Algorithm~\ref{alg:main} when using  Algorithm~\ref{alg:dual-gradient-descent} or the PGA algorithm as its subproblem solvers.  Columns ``\#z", ``\#nz", and ``$F$" give the number of zero groups, the number of non-zero groups, and the final objective value, respectively.}
\medskip
{\begin{tabular}{|cc|ccc|ccc|}
\hline
                    &              &  \multicolumn{3}{c|}{Algorithm~\ref{alg:dual-gradient-descent} ($\optone{}$)} & \multicolumn{3}{c|}{PGA} \\
           data set &    $\Lambda$ &               \#z  &  \#nz  &  $F$ & \#z & \#nz & $F$ \\
\hline
                a9a &  0.013458 &  $12$ &  $2$ & $0.508337$ & $0$ &  $14$ & $0.508337$ \\
       colon-cancer &  0.017751 & $213$ & $10$ & $0.336270$ & $1$ & $222$ & $0.336270$ \\
 duke breast-cancer &  0.016198 & $779$ & $13$ & $0.246910$ & $2$ & $790$ & $0.246910$ \\
            gisette &  0.012003 & $536$ & $20$ & $0.402671$ & $2$ & $554$ & $0.402671$ \\
           leukemia &  0.020514 & $781$ & $11$ & $0.258627$ & $0$ & $792$ & $0.258627$ \\
            madelon &  0.000402 &  $19$ & $37$ & $0.666079$ & $0$ &  $56$ & $0.666112$ \\ 
          mushrooms &  0.009528 &  $10$ &  $3$ & $0.316138$ & $0$ &  $13$ & $0.316138$ \\
                w8a &  0.006687 &  $24$ & $10$ & $0.429029$ & $0$ &  $34$ & $0.429029$ \\
\hline
\end{tabular}}
\label{tab:subsolver}
\end{table}

\section{Conclusion}\label{sec.conclusion}

We proposed a PG framework to solve optimization problems that use a regularizer whose proximal operator does not have a closed-form solution. To address this challenge, we proposed two adaptive criteria for approximately solving the PG subproblem, and then proved a worst-case complexity result for the maximum number of iterations before an approximate first-order solution is computed. We also designed an enhanced solver for the PG subproblem when the \grplone{} regularizer is used.  When this subproblem solver is used within our overall PG method, we provide an upper bound on the number of iterations before optimal support identification occurs. Numerical experiments on regularized logistic regression problems illustrate the effectiveness of our approach for efficiently finding structured sparse solutions.

%

\bibliographystyle{plain}
\bibliography{references}

\end{document}